\documentclass[12pt,leqno]{article}
\usepackage[english,activeacute]{babel}
\usepackage{amsmath,amsfonts,amssymb,mathrsfs,amsthm}
\addtolength{\oddsidemargin}{-.8in}
	\addtolength{\evensidemargin}{-.8in}
	\addtolength{\textwidth}{1.6in}

	\addtolength{\topmargin}{-.8in}
	\addtolength{\textheight}{1.6in}

\newcommand\R{\mathbb{R}}

\newcommand\Z{\mathbb{Z}}

\newcommand\bna{\begin{eqnarray*}}
\newcommand\ena{\end{eqnarray*}}

\newcommand\bnan{\begin{eqnarray}}%numérotée
\newcommand\enan{\end{eqnarray}}

\newcommand\Xsbt{X^{s,b}_{T}}
\newcommand\Xsb{X^{s,b}}

\newcommand\Xzbt{X^{0,b}_{T}}

\newcommand\intt{\int_0^t}
\newcommand\intT{\int_0^T}

\newcommand\nor[2]{\left\|#1\right\|_{#2}}
\newcommand\norL[1]{\left\|#1\right\|_{L^2}}
\newcommand\sgn{\textnormal{sgn}}

\newcommand\tend[2]{\underset{#1 \to #2}{\longrightarrow}}

\newcommand\Tu{\mathbb{T}^1}

\newtheorem{thrm}{Theorem}[section]
\newtheorem{rmrk}{Remark}[section]
\newtheorem{lmm}{Lemma}[section]
\newtheorem{crllr}{Corollary}[section]
\newtheorem{prpstn}{Proposition}[section]

\author{Camille Laurent\thanks{Universit\'e Paris-Sud, B\^atiment 425, 91405 Orsay, France (camille.laurent@math.u-psud.fr).}}

%\author{Camille Laurent \thanks{Universit\'e Paris-Sud, Laboratoire de Math\'{e}matiques d'Orsay, Orsay Cedex, F-91405 (camille.laurent@math.u-psud.fr).}}

\title{Global controllability and stabilization for the nonlinear Schr\"odinger equation on an interval}
% If you active the next line, the date disappears
%\date{}
\begin{document}
\maketitle
\begin{abstract}
We prove global internal controllability in large time for the nonlinear Schr\"odinger equation on a bounded interval with periodic, Dirichlet or Neumann conditions. Our strategy combines stabilization and local controllability near $0$. We use Bourgain spaces to prove this result on $L^2$. We also get a regularity result about the control if the data are assumed smoother.  
\end{abstract}
\vspace{0.2 cm}

{\bf Key words.} Controllability, Stabilization, Nonlinear Schr\"odinger equation, Bourgain spaces

\vspace{0.2 cm}{\bf AMS subject classifications.} 93B05, 93D15, 35Q55, 35A21 
%\tableofcontents
%\newpage\title{Global controllability and stabilization for the nonlinear Schr\"odinger equation on an interval}%\thanks{Orsay????????}%% At most 5 thanks
%

%

%\subjclass[2000]{93B05, 93D15, 35Q55, 35A21}
%
%\keywords{Controllability, Stabilization, Nonlinear Schr\"odinger equation, Bourgain spaces}
%
\maketitle
%%-----------------------------
%%      your text
%%-----------------------------

%\usepackage{showkeys}

\tableofcontents
%-----------------------------------------------------------------------
\section*{Introduction}
%-----------------------------------------------------------------------
In this article, we study the stabilization and exact controllability for the periodic one-dimensional nonlinear Schr\"odinger equation (NLS). 
\begin{eqnarray}
\label{eqncontrolnl}
\left\lbrace
\begin{array}{rcl}%argument r=alignement à droite puis center et left
i\partial_t u + \partial_x^2 u &=&\lambda |u|^2u \quad \textnormal{on}\quad [0,+\infty [\times \Tu\\
u(0)&=&u_{0} \in L^2(\Tu)
\end{array}
\right.
\end{eqnarray}
with $\lambda \in \R$.

The well posedness in such a low regularity was proved by J. Bourgain \cite{Bourgain}. The proof uses the so called Bourgain spaces $\Xsb$ to get local well posedness and the conservation of the $L^2$ mass for global existence.

The aim of this article is to prove exact internal controllability of system (\ref{eqncontrolnl}) in large time for a control supported in any small open subset of $\Tu$. We also extend these results to $]0,\pi[$ with Dirichlet or Neumann boundary conditions. The strategy follows the one of B. Dehman, P. G\'erard and G. Lebeau \cite{control-nl} where exact controllability in $H^1$ is proved for defocusing NLS on compact surfaces. Our result differs from this one because we obtain a control at a lower regularity. This allows to consider the focusing and defocusing equation and to use a different stabilization term, which seems more natural. Moreover, if the Cauchy data are smoother, that is $H^s$ with $s\geq 0$, the control we build on $L^2$ keeps that regularity, without any assumption on the size in $H^s$. Yet, in this low regularity, Strichartz inequality of \cite{Strichartz} does not provide uniform well posedness, and this forces us to use $\Xsb$ spaces.

The strategy is first to prove stabilization and to combine it with local exact controllability near $0$ to get null controllability. Then, we remark that the equation obtained by reversing time fulfills exactly the same properties and this allows to establish exact controllability. \\
Let $a=a(x)\in L^{\infty}(\Tu)$ real valued, the stabilization system we consider is 
\begin{eqnarray}
\label{eqndampedL2intro}
\left\lbrace
\begin{array}{rcl}%argument r=alignement à droite puis center et left
i\partial_t u + \partial_x^2 u +ia^2 u &=&\lambda|u|^2u \quad \textnormal{on}\quad[0,T]\times \Tu\\
u(0)&=&u_{0} \in L^2(\Tu).
\end{array}
\right.
\end{eqnarray}
The well posedness of this system will be proved in Section \ref{sectionexistence} and we can check that it satisfies the mass decay.
\bnan
\left\|u(t)\right\|_{L^2}^2-\left\|u(0)\right\|_{L^2}^2=-2\intt \left\|au(\tau)\right\|_{L^2}^2.
\enan
Our theorem states that we have an exponential decay.
\begin{thrm}
\label{thmstab}
Assume that $a(x)^2>\eta>0$ on some nonempty open set. Then, for every $R_0>0$, there exist $C>0$ and $\gamma>0$ such that inequality
$$\norL{u(t)} \leq Ce^{-\gamma t} \norL{u_0} \quad t>0$$
holds for every solution $u$ of system (\ref{eqndampedL2intro}) with initial data $u_0$ such that $\left\|u_0\right\|_{L^2}\leq R_0$.
\end{thrm}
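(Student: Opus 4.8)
The plan is to prove exponential decay by first establishing an observability-type estimate and then upgrading it to exponential decay via a semigroup/iteration argument.

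The plan is to reduce the exponential decay to an observability estimate and then iterate using the mass dissipation identity. Concretely, I would first prove that for every $R_0>0$ there exist $T>0$ and $C=C(R_0)>0$ such that
$$\norL{u_0}^2 \leq C\intT \norL{au(\tau)}^2\,d\tau$$
for every solution with $\norL{u_0}\leq R_0$. Granting this, the dissipation identity gives $\norL{u(T)}^2 = \norL{u_0}^2 - 2\intT\norL{au}^2 \leq (1-2/C)\norL{u_0}^2$, so $\norL{u(T)}\leq \rho\norL{u_0}$ with $\rho=(1-2/C)^{1/2}<1$. Because the mass is nonincreasing, $\norL{u(nT)}\leq R_0$ for every $n$, so the same constant $C(R_0)$ applies on each interval $[nT,(n+1)T]$ and the estimate iterates to $\norL{u(nT)}\leq\rho^n\norL{u_0}$; combined once more with mass decay on intermediate times this yields $\norL{u(t)}\leq Ce^{-\gamma t}\norL{u_0}$ with $\gamma=-T^{-1}\ln\rho$.

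The heart of the argument is therefore the observability estimate, which I would establish by contradiction and compactness. Assume it fails: there is a sequence of solutions $u_n$ with $\norL{u_{0,n}}\leq R_0$ and $\intT\norL{au_n}^2 \leq \frac{1}{n}\norL{u_{0,n}}^2$. Set $\alpha_n=\norL{u_{0,n}}$ and pass to a subsequence with $\alpha_n\to\alpha\in[0,R_0]$, distinguishing two cases. If $\alpha>0$, the well-posedness theory of the damped equation bounds $u_n$ in $\Xzbt$; the compactness properties of Bourgain spaces then yield a subsequence converging strongly in $L^2([0,T]\times\Tu)$ to a limit $u$, which solves the same damped equation (the cubic term passing to the limit by strong convergence) and satisfies $au\equiv 0$, hence $u\equiv 0$ on $[0,T]\times\omega$ where $\omega=\{a^2>\eta\}$. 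If instead $\alpha=0$, I would rescale $v_n=u_n/\alpha_n$, so that $\norL{v_{0,n}}=1$ and $v_n$ solves the damped equation with nonlinearity $\lambda\alpha_n^2|v_n|^2v_n\to 0$; the same compactness yields a limit $v$ solving the linear damped equation $i\partial_t v+\partial_x^2 v+ia^2 v=0$ with $\norL{v_0}=1$ and $v\equiv 0$ on $[0,T]\times\omega$.

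In both cases the limit is a solution of a Schr\"odinger equation with a bounded potential (the cubic term being absorbed as the potential $\lambda|u|^2$, the damping as $ia^2$) that vanishes on the open set $[0,T]\times\omega$, and the conclusion follows from unique continuation: such a solution must vanish identically. In the linear rescaled case this directly contradicts $\norL{v_0}=1$; in the case $\alpha>0$, mass dissipation forces $\norL{u_n(t)}^2\to\alpha^2$ uniformly on $[0,T]$, so $\intT\norL{u_n}^2\to T\alpha^2>0$, contradicting strong convergence to $u\equiv 0$.

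I expect the main obstacle to be twofold. First, the compactness step: one must upgrade weak bounds in $\Xzbt$ to strong $L^2$ convergence in order to pass the cubic nonlinearity and the damping observation to the limit, which requires the bilinear smoothing and compactness estimates for the one-dimensional periodic flow rather than Strichartz estimates alone. Second, the unique continuation property for the Schr\"odinger operator with an $L^\infty$ potential on $[0,T]\times\Tu$: for the constant-coefficient free equation this is an elementary Fourier-series argument, but the presence of the potential $ia^2$ requires a genuine unique continuation theorem (e.g.\ via Carleman estimates). These two ingredients, Bourgain-space compactness and unique continuation, are where the real work lies, while the passage from observability to exponential decay is essentially formal.
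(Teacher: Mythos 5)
Your overall architecture---reduce to the observability estimate $\norL{u_0}^2 \leq C\intT \norL{au(\tau)}^2\,d\tau$, iterate it through the dissipation identity to get exponential decay, and prove observability by contradiction with the two cases $\alpha>0$ and $\alpha=0$---is exactly the paper's. However, the step you dispatch as ``the compactness properties of Bourgain spaces then yield a subsequence converging strongly in $L^2([0,T]\times\Tu)$'' is false as stated, and it is precisely where the main work of the paper lies. The compact embedding $X^{0,b}_T\subset X^{-1+b,-b}_T$ only gives strong convergence in spaces of strictly negative spatial regularity; a sequence such as $\psi(t)e^{inx}e^{-in^2t}$ is bounded in $X^{0,b}$, converges weakly to $0$, and does not converge in $L^2([0,T]\times\Tu)$. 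Strong $L^2_{loc}$ convergence must instead be extracted from the additional information that $u_n\to 0$ in $L^2([0,T]\times\omega)$ (coming from $\intT\norL{au_n}^2\to 0$) and then \emph{propagated} from $\omega$ to all of $\Tu$; this is the paper's propagation-of-compactness theorem (Theorem \ref{thmpropagation3}), proved by a commutator argument with operators of the form $\Psi(t)\varphi(x)D^{-1}$. Without it, your passage to the limit in the cubic term and your final contradiction $\intT\norL{u_n}^2\to T\alpha^2$ both collapse. Relatedly, the paper is careful (citing Molinet) that a weak $X^{0,b}_T$ limit of NLS solutions need not solve NLS; it identifies the limit with a true solution only after strong convergence, by fixing a time $t_0$ and invoking Lipschitz dependence of the flow.

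The second gap is in the unique continuation step. Your limit $u$ lies only in $X^{0,b}_T$, and the ``potential'' $\lambda|u|^2$ is no smoother, whereas the unique continuation theorem actually available (Isakov, via Carleman estimates; Proposition \ref{uniquecontinuation}) requires both the coefficient and the solution to be $C^{\infty}$. Bridging this is the content of the paper's propagation-of-regularity results (Theorem \ref{thmpropagreg} and Corollary \ref{corpropagreg}): an $X^{0,b}_T$ solution of NLS vanishing on $]0,T[\times\omega$ is shown to be $C^{\infty}$ on $]0,T[\times\Tu$ by iterating a gain of $\tfrac{1-b}{2}$ derivatives. You name unique continuation with an $L^{\infty}$ potential as the needed black box, but the genuine difficulty is the regularity of the solution itself, which your proposal does not address. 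A minor additional slip: in both cases the damping term $ia^2u_n$ tends to $0$ strongly in $L^2$ because of the observation hypothesis, so the limit equation is the free Schr\"odinger (respectively pure NLS) equation rather than the damped one; this is harmless, but it is exactly what lets the paper avoid unique continuation with a merely $L^{\infty}$ coefficient $a^2$.
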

Then, as a consequence of stabilization and local controllability near $0$ established in Section \ref{sectioncontrol0}, we obtain the following result.
\begin{thrm}
\label{thmcontrol}
For any nonempty open set $\omega \subset \Tu$ and $R_0>0$, there exist $T>0$ and $C>0$ such that for every $u_0$ and $u_1$ in $L^2(\Tu)$ with
$$ \norL{u_0} \leq R_0 \quad \textnormal{and}\quad \norL{u_1} \leq R_0$$
there exists a control $g\in C([0,T],L^2)$ with $\nor{g}{L^{\infty}([0,T],L^2)}\leq C $ supported in $[0,T]\times \omega$, such that the unique solution $u$ in $X^{0,b}_T$ to the Cauchy problem
\begin{eqnarray}
\label{eqncontrolintro}
\left\lbrace
\begin{array}{rcl}%argument r=alignement à droite puis center et left
i\partial_t u + \partial_x^2 u &=&\lambda|u|^2u +g\quad \textnormal{on}\quad[0,T]\times \Tu\\
u(0)&=&u_{0} \in L^2(\Tu)
\end{array}
\right.
\end{eqnarray}
satisfies $u(T)=u_1$.\\
Moreover, if $u_0$ and $u_1 \in H^s$, with $s\geq 0$, one can impose $g\in C([0,T],H^s)$.
\end{thrm}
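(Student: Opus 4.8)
The plan is to derive Theorem \ref{thmcontrol} by assembling the two ingredients already available: the exponential stabilization of Theorem \ref{thmstab} and the local exact controllability near $0$ of Section \ref{sectioncontrol0}. The argument has three movements: null controllability (steering $u_0$ to $0$), reaching $u_1$ from $0$ by a time-reversal symmetry, and a concatenation. First I would fix the open set $\omega$ and choose a real valued damping profile $a\in C^\infty(\Tu)$ with $\mathrm{supp}\,a\subset\omega$ and $a^2>\eta>0$ on some nonempty open subset of $\omega$; this is possible since $\omega$ is open and nonempty. The crucial observation is that a solution $u$ of the damped system (\ref{eqndampedL2intro}) is precisely a solution of the control system (\ref{eqncontrolintro}) for the feedback control $g=-ia^2u$, which is supported in $[0,T]\times\omega$. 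Since $\norL{u_0}\leq R_0$, Theorem \ref{thmstab} provides $C,\gamma>0$ with $\norL{u(t)}\leq Ce^{-\gamma t}R_0$, so choosing $T_0=T_0(R_0)$ with $Ce^{-\gamma T_0}R_0\leq\delta$ drives $u_0$ into the ball $\norL{u(T_0)}\leq\delta$, where $\delta$ is the smallness radius furnished by the local controllability near $0$. Along this phase $\|g(t)\|_{L^2}\leq\|a^2\|_{L^\infty}Ce^{-\gamma t}R_0$ stays bounded by a constant depending only on $R_0$.

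Next, once $\norL{u(T_0)}\leq\delta$, I would invoke the local exact controllability of Section \ref{sectioncontrol0} on $[T_0,T_0+\tau]$ to produce a control supported in $\omega$ steering $u(T_0)$ exactly to $0$, with norm controlled by $\norL{u(T_0)}\leq\delta$. Concatenating with the stabilization phase yields null controllability on $[0,T_1]$, $T_1=T_0+\tau$, with the required $L^\infty$-in-time, $L^2$-in-space bound. To reach $u_1$ instead of $0$, I would use that $\lambda\in\R$ makes the equation time-reversible: if $u$ solves (\ref{eqncontrolintro}) with control $g$, then $v(t,x):=\overline{u(T_1-t,x)}$ solves the same equation with control $\tilde g(t,x)=\overline{g(T_1-t,x)}$, still supported in $\omega$, and with $v(0)=\overline{u(T_1)}$, $v(T_1)=\overline{u(0)}$. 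Applying the null controllability just obtained to the datum $\overline{u_1}$ (which satisfies $\norL{\overline{u_1}}=\norL{u_1}\leq R_0$) and reversing time thus steers $0$ to $u_1$ on a second interval of length $T_2=T_2(R_0)$. Concatenating the two phases on $[0,T]$ with $T=T_1+T_2$ (depending only on $R_0$) gives a control supported in $[0,T]\times\omega$ and bounded in $L^\infty([0,T],L^2)$ by $C(R_0)$; uniqueness of $u$ in $\Xzbt$ follows from the well-posedness theory of Section \ref{sectionexistence}.

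For the regularity statement, when $u_0,u_1\in H^s$ with $s\geq0$, the feedback control $g=-ia^2u$ of the stabilization phase lies in $C([0,T],H^s)$ as soon as the damped flow propagates $H^s$ regularity and $a$ is smooth, and one checks that the local controllability near $0$ can be carried out keeping $H^s$ regularity of the control; the time-reversal symmetry preserves $H^s$ as well, since conjugation and time reflection are isometries of $H^s$.

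\textbf{Main obstacle.} I expect two delicate points. The first is the persistence of $H^s$ regularity with \emph{no} size restriction: the $L^2$ control is global, so one cannot rely on a fixed-point smallness argument in $H^s$, and the gain of regularity must come from the structure of the equation and the $\Xsb$ smoothing rather than from a contraction. The second is ensuring that the concatenated control is genuinely an element of $C([0,T],L^2)$ across the junctions $t=T_0$ and $t=T_1$; this typically requires that the local controllability near $0$ deliver controls matching (for instance vanishing) at the endpoints in time, so that gluing produces a continuous control rather than a merely piecewise-continuous one.
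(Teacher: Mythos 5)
Your proposal is correct and follows exactly the strategy the paper uses to deduce Theorem \ref{thmcontrol}: stabilize via the feedback $g=-ia^2u$ with $a\in C^\infty_0(\omega)$ until the state enters the smallness ball of Theorem \ref{thmcontrolenonlin}, steer to $0$, and reach $u_1$ by applying null controllability to $\overline{u_1}$ and conjugating/reversing time (valid since $\lambda\in\R$). The two delicate points you flag are the ones the paper addresses: continuity at the junctions is handled by the time cutoffs $\varphi(t)$ already built into the damping and the local control, and the size-free $H^s$ persistence is precisely what the uniform-in-$s$ estimates of Section \ref{sectioncontrol0} and Proposition \ref{proplinbehaviorHsindep} were designed to provide.
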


We deduce the same results on $L^2(]0,\pi[)$ with the Dirichlet (respectively Neumann) Laplacian. To accomplish this, we use the identification of $D(-\Delta_D)$ (resp. $D(-\Delta_N)$) with the closed subspace of $H^2(\R/2\pi\Z)$ of odd (resp. even) functions. We only have to check along the proof that the control we build on $\Tu = \R/2\pi\Z$ remains odd (resp. even) if $u_0$ is so. The propagation of regularity for the control takes the form : if $u_0 \in D(-\Delta_D^s)$, then one can choose $g\in C([0,T], D(-\Delta_D^s))$ (and similarly for $\Delta_N$).

The continuity in time for $g$ is obtained with time cutoff at each stage : the stabilization term is brought to $0$ and the local control we build is identically zero at initial and final time. For example, if $u_0$ and $u_1$ are assumed in $C^{\infty}$, it allows to impose $u$ and $g$ in $C^{\infty}([0,T]\times \Tu)$.
 
The independence of $C$, $\gamma$ and the time of control $T$ on the bound $R_0$ are an open problem. Yet, it is an interesting fact that even if we want a control in $H^s$, the time of controllability only depends on the size of the data in $L^2$. However, it is unknown whether there is really a minimal time of controllability. This is in strong contrast with the linear case where exact controllability occurs in arbitrary small time and the conditions are only geometric for the open set $\omega$. For example, exact controllability is known to be true when Geometric Control Condition is realized, see G. Lebeau \cite{control-lin1}, but also for any open set $\omega$ of $\mathbb{T}^n$, see S. Jaffard \cite{Jaffard} and V. Komornik \cite{Komornik}. N. Burq and M. Zworski \cite{Burq} also proved the equivalence with a resolvent estimate. Moreover, some recent studies have analysed the explosion of the control cost when $T$ tends to $0$ : K.- D. Phung \cite{Phung} by reducing to the heat or wave equation, L. Miller \cite{Miller} with resolvent estimates, G. Tenenbaum and M. Tucsnak \cite{Tenenbaum} with number theoretic arguments.

Let us now describe briefly the main arguments of the proof of Theorem \ref{thmstab} and \ref{thmcontrol}. First, the functional spaces used are the Bourgain spaces which are especially suited for solving dispersive equations. In our problem, we use some multilinear estimates in $\Xsb$ (see the definition in Section \ref{sectionprop}). The first step is the following estimate for $b\geq 3/8$, uniformly for $T\leq 1$
\bnan
\label{inegXsbL4}
 \left\|u\right\|_{L^4([0,T]\times \Tu)}\leq C \left\|u\right\|_{X^{0,b}_T}.
 \enan
This was first proved by J. Bourgain in \cite{Bourgain}. A simpler proof, due to N. Tzvetkov, can be found in the book of T. Tao \cite{Tao} p 104. This allows to prove multilinear estimates in $\Xsb$, as follows.
\begin{lmm} 
\label{lemmemultilin}
For every $s\geq 0$, $b,b'\geq 3/8$, there exists $C_s$ independent on $T\leq 1$ such that for $u$ and $\tilde{u} \in X^{s,b}_T$, we have
\bnan
\label{multilinL2}
 \left\| |u|^2 u \right\|_{X^{s,-b'}_T} &\leq &C \left\|u\right\|^2_{X^{0,b}_T} \left\|u\right\|_{X^{s,b}_T}\\
\label{multilinL2Hs}
 \left\||u|^2u-|\tilde{u}|^2\tilde{u}\right\|_{X^{s,-b'}_T} &\leq &C \left(\left\|u\right\|^2_{\Xsbt}+ \left\|\tilde{u}\right\|^2_{\Xsbt} \right) \left\|u-\tilde{u}\right\|_{\Xsbt}.
\enan
\end{lmm}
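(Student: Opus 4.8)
The plan is to reduce everything to the single $L^4$ bound (\ref{inegXsbL4}) by duality; this is the only genuinely dispersive input, and the remaining work is the algebra of distributing the spatial weight $\langle\xi\rangle^s$ plus some bookkeeping with the restriction norms. First I would pass from the restriction spaces $\Xsbt$ to the global spaces $X^{s,b}(\R\times\Tu)$: since $|u|^2u$ on $[0,T]$ depends only on $u|_{[0,T]}$, it suffices to prove the corresponding estimate for a function $v$ defined on all of $\R\times\Tu$ and then apply it to an extension of $u$. The one point to watch is that one should use a single extension operator built only from operations in the time variable (a fixed time cut-off composed with a reflection), so that it commutes with $\langle D_x\rangle^s$ and therefore realizes $\nor{u}{\Xsbt}$ and $\nor{u}{\Xzbt}$ simultaneously up to a constant; this is exactly what lets the two low-regularity factors keep the $\Xzb$ norm rather than the $\Xsb$ norm. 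Thus it is enough to establish, on $\R\times\Tu$,
\bna
\norm{v\bar v v}_{X^{s,-b'}} \leq C\,\norm{v}_{\Xzb}^2\,\norm{v}_{\Xsb}.
\ena

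The core estimate I would prove by duality. Writing the $X^{s,-b'}$ norm through weighted Plancherel, the left-hand side equals the supremum over $\norm{w}_{X^{0,b'}}=1$ of $\left|\sum_\xi\int\langle\xi\rangle^s\,\widehat{v\bar v v}(\xi,\tau)\,\overline{\hat w(\xi,\tau)}\,d\tau\right|$. Expanding the convolution, the output frequency satisfies $\xi=\xi_1-\xi_2+\xi_3$, so that $\langle\xi\rangle^s\leq C_s(\langle\xi_1\rangle^s+\langle\xi_2\rangle^s+\langle\xi_3\rangle^s)$; placing the weight on each of the three factors in turn splits the form into three pieces. Replacing every space-time Fourier coefficient by its modulus (which changes none of the $\Xsb$ or $L^4$ norms but makes all coefficients nonnegative) each piece is bounded by a space-time integral of the shape $\int_{\R\times\Tu}(\langle D_x\rangle^s V)\,\bar V\,V\,\bar W$, where $\norm{\langle D_x\rangle^s V}_{\Xzb}=\norm{v}_{\Xsb}$, $\norm{V}_{\Xzb}=\norm{v}_{\Xzb}$ and $\norm{W}_{X^{0,b'}}=1$. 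Hölder with four $L^4$ factors followed by (\ref{inegXsbL4}) applied to each factor (using $b\geq 3/8$ for the three $v$-factors and $b'\geq 3/8$ for $W$) yields the bound $C\,\norm{v}_{\Xsb}\norm{v}_{\Xzb}^2$, which is (\ref{multilinL2}).

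For the difference estimate (\ref{multilinL2Hs}) I would first record that the same duality/Hölder argument gives the fully trilinear form $\norm{v_1\bar v_2 v_3}_{X^{s,-b'}}\leq C\,\norm{v_1}_{\Xsb}\norm{v_2}_{\Xsb}\norm{v_3}_{\Xsb}$ (distribute $\langle\xi\rangle^s$ onto whichever factor carries it and bound the other two in $\Xzb\supseteq\Xsb$, valid since $s\geq 0$). Then the algebraic identity
\bna
|u|^2u-|\tilde u|^2\tilde u=(u-\tilde u)\,\bar u\,u+\tilde u\,(\overline{u-\tilde u})\,u+\tilde u\,\bar{\tilde u}\,(u-\tilde u)
\ena
writes the difference as three trilinear terms, each containing exactly one factor $u-\tilde u$ (or its conjugate). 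Applying the trilinear estimate to each term and bounding the remaining quadratic factors by $\norm{u}^2_{\Xsbt}+\norm{\tilde u}^2_{\Xsbt}$ (via $ab\leq\tfrac12(a^2+b^2)$) gives (\ref{multilinL2Hs}), after passing back to restriction norms exactly as before.

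The main obstacle here is not a hard inequality but the correct distribution of the weight $\langle\xi\rangle^s$ together with the restriction-norm bookkeeping: one must ensure that exactly one factor absorbs the full $s$ derivatives while the other two are measured in the weaker $\Xzbt$ norm, since this asymmetry (two $L^2$-level norms against one $H^s$-level norm) is precisely what the stabilization and controllability arguments exploit later. Once the reduction to nonnegative coefficients is in place, all the dispersive content is contained in (\ref{inegXsbL4}), and no finer bilinear or resonance analysis is needed.
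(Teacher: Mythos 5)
Your proposal is correct and follows essentially the same route as the paper's Appendix (Proposition \ref{propinegtrilin}): duality against $X^{-s,b'}$ (equivalently, testing $\langle D_x\rangle^s(v\bar v v)$ against $X^{0,b'}$), distribution of the spatial weight via $\left|k_1-k_2+k_3\right|_{\wr}^s\leq 3^s\max_j\left|k_j\right|_{\wr}^s$, replacement of Fourier coefficients by their moduli, and four applications of the $L^4$ estimate (\ref{inegXsbL4}); the difference estimate likewise reduces to the trilinear form via the standard telescoping identity. Your explicit remark about using a single time-variable extension operator to realize the $X^{s,b}_T$ and $X^{0,b}_T$ norms simultaneously is a point the paper passes over silently, and it is a worthwhile clarification rather than a deviation.
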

This type of multilinear estimates was introduced in \cite{Bourgain}, but we refer to \cite{Bourgainlivre} p 107 where the estimates we need are stated during the proof of Theorem 2.1 chapter V. In the Appendix, we recall the proof and precise some dependence in $s$ of the estimates.

We prove the control near $0$ by a perturbative argument near the one of E. Zuazua \cite{zuazua}. We use the fixed point theorem of Picard to deduce our result from the linear control. The propagation of $H^s$ regularity from the state to the control is obtained using this property for the linear control and a local linear behavior. The idea comes from the work of B. Dehman and G. Lebeau \cite{DL} about the wave equation where only some smallness on a finite number of harmonics is required. A notable fact in our case is that no assumption of smallness is made on the $H^s$ norm. We only need the $L^2$ norm to be small. Yet, to obtain a bound independent on $s$, we have to make some estimates with constants independent on $s$. This will only be possible up to smoother terms, but this will be enough to conclude.

The proof of stabilization is more intricate. In a contradiction argument, following B. Dehman, G. Lebeau, E. Zuazua \cite{control_NLW} and \cite{control-nl}, we are led to prove the strong convergence to zero in $X^{0,b}_T$ of some weakly convergent sequence $(u_n)$ solution to damped NLS. In \cite{control-nl}, the authors use some linearisability property of NLS in $H^1$. Yet, this is false in the $L^2$ case. Moreover, as it was seen by L. Molinet in \cite{Molinet}, a weak limit $u$ of solutions of NLS is in general not necessarily solution of the same equation. Indeed, we have to proceed a little differently.

We first establish the strong convergence by some propagation of compactness. For a sequence $(u_n)$ weakly convergent to $0$ in $X^{0,b}_T$ satisfying
\begin{eqnarray*}
\left\lbrace
\begin{array}{c}%argument r=alignement à droite puis center et left
i\partial_t u_n + \partial_x^2 u_n \rightarrow 0 \quad \textnormal{in}\quad X^{-1+b,-b}_T\\
u_n\rightarrow 0 \quad \textnormal{in}\quad L^2([0,T]\times \omega),
\end{array}
\right.
\end{eqnarray*}
we prove that $u_n\rightarrow 0$ in $L^2_{loc}([0,T]\times \Tu)$. As the geometric control assumption is fulfilled, the propagation of compactness could be proved using microlocal defect measure introduced by P. G\'erard \cite{mesuredefaut}, adapting to $X^{s,b}$ spaces the argument of \cite{control-nl} inspired by C. Bardos and T. Masrour \cite{BardosMasrour}. In dimension $1$, the microlocal analysis is much simpler and we have chosen, for the convenience of the reader, to prove it with elementary arguments (even if the ideas are the same).

Once we know that the convergence is strong, we infer that the limit $u$ is solution to NLS. We use a classical unique continuation theorem to infer that it is $0$.
\begin{prpstn}
\label{uniquecontinuation}
For every $T>0$ and $\omega$ any nonempty open set of $\Tu$, the only solution in $C^{\infty}([0,T]\times \Tu)$ to the system 
\begin{eqnarray*}
\left\lbrace
\begin{array}{c}%argument r=alignement à droite puis center et left
i\partial_t u + \partial_x^2 u = b(t,x)u  \textnormal{ on } [0,T]\times \Tu\\
u=0 \textnormal{ on } [0,T]\times \omega
\end{array}
\right.
\end{eqnarray*}
where $b(t,x) \in C^{\infty}([0,T]\times \Tu)$ is the trivial one $u \equiv 0$.
\end{prpstn}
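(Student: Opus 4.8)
The statement is a qualitative unique continuation result, so the plan is to prove a \emph{local} continuation property across spatial hypersurfaces and then globalize by a connectedness argument on the (connected) torus. Since $b$ is only $C^\infty$ and not analytic, Holmgren's theorem is unavailable and the natural tool is a Carleman estimate for the Schr\"odinger operator $P = i\partial_t + \partial_x^2$. The zeroth-order term is harmless: writing the equation as $Pu = b(t,x)u$ with $b$ bounded, the factor $|bu|^2 \le C|u|^2$ on the right-hand side of any Carleman inequality can be absorbed into the left-hand side once the large parameter is taken large enough. Thus everything reduces to a weighted a priori estimate for $P$ together with the geometry of the surfaces across which we propagate.

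First I would fix an interior time $t_0 \in (0,T)$ and a point $x_0$ lying on the boundary of the (open) set where $u$ is already known to vanish, and aim to show that $u$ vanishes in a full neighborhood of $(t_0,x_0)$. The key observation in one dimension is that every spatial hypersurface $\{x = \textnormal{const}\}$ is non-characteristic for $P$: the characteristic set is $\{\tau = -\xi^2\}$, which never meets the conormal direction $\tau = 0,\ \xi \neq 0$. One then picks a weight $\varphi$, convex in $x$, conjugates $P_\varphi = e^{\tau\varphi}Pe^{-\tau\varphi}$, and splits it into its symmetric and antisymmetric parts. Because $i\partial_t$ is symmetric on $L^2$ and commutes with a weight chosen to depend essentially on $x$ only, the positive contribution comes entirely from the spatial commutator, exactly as in the elliptic one-dimensional computation; this yields
\begin{eqnarray*}
\tau^3 \int e^{2\tau\varphi}|u|^2 \,dt\,dx \;\le\; C\int e^{2\tau\varphi}|Pu|^2\,dt\,dx \;+\; (\textnormal{boundary terms}),
\end{eqnarray*}
for $u$ compactly supported in time and $\tau$ large, from which local vanishing across $\{x = x_0\}$ follows in the standard way after inserting spatial cutoffs.

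The main obstacle is the finite time interval. A weight depending only on $x$ cannot damp the source term $i\chi'(t)u$ created by a temporal cutoff $\chi(t)$, since $e^{2\tau\varphi}$ does not distinguish times; one is therefore forced to use a genuinely pseudoconvex \emph{space-time} weight $\varphi(t,x)$ for $P$, whose level sets bound a lens strictly contained in $(0,T)\times\Tu$, so that H\"ormander's local unique continuation machinery applies with no boundary contribution. Verifying H\"ormander's pseudoconvexity condition for the dispersive operator $P$ --- which is not of principal type, its second-order symbol $-\xi^2$ being independent of $\tau$ --- is the technical heart and must be carried out with the full anisotropic symbol $-\tau-\xi^2$. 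Granting local continuation across every such surface at interior times, I would finish with a connectedness argument: the interior of $\{u = 0\}$ is open, contains $(0,T)\times\omega$, and is closed in $(0,T)\times\Tu$ by the local property, hence equals $(0,T)\times\Tu$ since the torus is connected; continuity of $u$ then gives $u\equiv 0$ on $[0,T]\times\Tu$. Alternatively, one may simply invoke a classical unique continuation theorem for the Schr\"odinger equation (of Isakov--Tataru type) in place of this hand-built estimate.
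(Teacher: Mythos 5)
Your proposal is correct and follows essentially the same route as the paper: the paper gives no hand-built argument at all, but simply invokes Isakov's Carleman-estimate-based unique continuation theorem (Corollary 6.1 of \cite{Isakov}), which is exactly the ``classical theorem of Isakov--Tataru type'' you offer as your fallback, and your sketch of the pseudoconvex-weight Carleman machinery is a faithful outline of what lies behind that citation. The only caveat is that your hand-built version leaves the genuinely delicate step --- verifying pseudoconvexity for the anisotropic symbol $-\tau-\xi^2$ with a space-time weight --- unexecuted, but since you explicitly defer to the cited theorem for that, the argument closes the same way the paper's does.
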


This was proved by Isakov \cite{Isakov} (see Corollary 6.1) using Carleman estimates.

Yet, the weak limit a priori belongs to $X^{0,b}_T$. Therefore, to apply Proposition \ref{uniquecontinuation}, we need $u$ smooth enough. We prove that a solution of NLS with $u\in C^{\infty}([0,T]\times \omega)$ is actually smooth. The proof is an adaptation to the $\Xsb$ spaces of propagation results of microlocal regularity coming from \cite{control-nl}. Again, we present it in such a way that no knowledge of microlocal analysis is necessary, even if the ideas deeply come from this theory. 

\bigskip

While writing this article, we learnt that L. Rosier and B. Y. Zhang \cite{RosierZhang} independently obtained a result of local controllability of NLS near $0$.

\bigskip

\textbf{Notation} Denote $D^r$ the operator defined on $\mathcal{D'}(\Tu)$ by
\bnan
\begin{array}{rclc}
\label{Dr}
\widehat{D^r u}(n)&=& \sgn(n)|n|^r \widehat{u}(n) & \quad \textnormal{if}\quad n\neq 0\\
&=& \widehat{u}(0) & \quad \textnormal{if} \quad n= 0.
\end{array}
\enan

\bigskip

In this article, $b$ and $b'$ will be two constants, fixed for the rest of the article, such that $1>b+b'$, $b>1/2>b'$, and estimates (\ref{multilinL2}) and (\ref{multilinL2Hs}) hold, see Lemma \ref{gainint} below for the justification of these assumptions.

$C$ will denote any absolute constant whose value could change along the article. It could actually depend on $s$. Yet, when the dependence on $s$ will be needed, this will be announced and we will denote $C$ if it is independent on $s$ and $C_s$ otherwise.

\bigskip

{\it Acknowledgements.} The author deeply thanks his adviser Patrick G\'erard for attracting his attention to this problem and for helpful discussions and encouragements.
%-------------------------------------------------------------------------
\section{Some properties of $\Xsb$ spaces}
%-------------------------------------------------------------------------
\label{sectionprop}
We equip the Sobolev space $H^s(\Tu)$ with the norm 
$$\left\|u\right\|^2_{H^s}=\left\|D^s u\right\|^2_{L^2}=\left|\widehat{u}(0)\right|^2+\sum_{k\neq 0}\left| k\right|^{2s} \left|\widehat{u}(k)\right|^2.$$
The Bourgain space $\Xsb$ is equipped with the norm
 \bna
 \left\|u\right\|^2_{X^{s,b}}&=& \left\|\widehat{u}(.,0)\right\|^2_{H^b(\R)}+\sum_{k}\int_{\R}\left| k\right|^{2s}\left\langle \tau+ k^2 \right\rangle^{2b} \left|\widehat{\widehat{u}}(\tau,k)\right|^2 d\tau\\
 &=&\left\|u^{\#}\right\|^2_{H^{b}(\R,H^s(\Tu))}
 \ena
 where $\left\langle .\right\rangle =\sqrt{1+|.|^2}$, $u=u(t,x)$, $t\in \R$, $x\in \Tu$, and $u^{\#}(t)=e^{-it\partial_x^2}u(t)$. $ \widehat{\widehat{u}}(\tau,k)$ denotes the Fourier transform of $u$ with respect to the time variable (indice $\tau$) and space variable (indice $k$). $\widehat{u}(t,k)$ denotes the Fourier transform in space variable.\\
$\Xsbt$ is the associated restriction space, with the norm
\bna 
\left\|u\right\|_{\Xsbt}=\inf \left\{ \left\| \tilde{u}\right\|_{\Xsb} \left| \tilde{u}=u \textnormal{   on   } [0,T]\times \Tu  \right. \right\}.
\ena
Let us study the stability of the $\Xsb$ spaces with respect to some particular operations.
\begin{lmm}
\label{lemmetps}
Let $\psi \in C^{\infty}_0(\R)$ and $u\in \Xsb$ then $\psi(t) u \in \Xsb$.\\
If $u\in \Xsbt$ then we have $\psi(t) u \in \Xsbt$.
\end{lmm}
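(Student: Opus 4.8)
The plan is to reduce everything to the elementary fact that multiplication by a smooth compactly supported function of time is bounded on the (Hilbert-space-valued) Sobolev space $H^b(\R)$ in the time variable. The bridge is the conjugation $u \mapsto u^{\#}$, $u^{\#}(t) = e^{-it\partial_x^2}u(t)$, which enters the very definition of the norm, $\nor{u}{\Xsb} = \nor{u^{\#}}{H^b(\R, H^s(\Tu))}$. Since $\psi$ depends only on $t$, it commutes with the fibered operator $e^{-it\partial_x^2}$ acting at each fixed time; concretely, for $v=\psi u$,
$$v^{\#}(t) = e^{-it\partial_x^2}\bigl(\psi(t)u(t)\bigr) = \psi(t)\,e^{-it\partial_x^2}u(t) = \psi(t)\,u^{\#}(t).$$
Hence $\nor{\psi u}{\Xsb} = \nor{\psi\, u^{\#}}{H^b(\R,\, H^s(\Tu))}$, and it suffices to prove that $f \mapsto \psi f$ is bounded on $H^b(\R,H)$ for an arbitrary Hilbert space $H$ (here $H = H^s(\Tu)$).

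For this boundedness I would argue on the Fourier side in time. Writing the $H$-valued convolution $\widehat{\psi f} = \hat\psi * \hat f$ and invoking Peetre's inequality $\langle \tau \rangle^b \le C \langle \tau - \sigma \rangle^{|b|}\langle \sigma \rangle^b$ to transfer the weight, one gets
$$\langle \tau \rangle^b \nor{\widehat{\psi f}(\tau)}{H} \le C \int \langle \tau - \sigma \rangle^{|b|}\,|\hat\psi(\tau - \sigma)|\; \langle \sigma \rangle^b \nor{\hat f(\sigma)}{H}\, d\sigma = C\,(g * h)(\tau),$$
where $g(\tau) = \langle \tau \rangle^{|b|}|\hat\psi(\tau)|$ and $h(\sigma) = \langle \sigma \rangle^b\nor{\hat f(\sigma)}{H}$. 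Since $\psi \in C^{\infty}_0(\R)$, its Fourier transform is Schwartz, so $g \in L^1(\R)$; Young's convolution inequality $\nor{g*h}{L^2} \le \nor{g}{L^1}\nor{h}{L^2}$ then yields $\nor{\psi f}{H^b(\R, H)} \le C\nor{f}{H^b(\R, H)}$ with $C$ depending only on $\psi$ and $b$, not on $s$ (since $H$ is arbitrary), which is consistent with the $s$-independence needed later.

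The restriction-space statement then follows formally. If $u \in \Xsbt$, choose any extension $\tilde u \in \Xsb$ agreeing with $u$ on $[0,T]\times\Tu$; by the first part $\psi\tilde u \in \Xsb$, and $\psi\tilde u = \psi u$ on $[0,T]\times\Tu$, so $\psi\tilde u$ is an admissible extension of $\psi u$. Therefore $\psi u \in \Xsbt$, and taking the infimum over all such $\tilde u$ gives $\nor{\psi u}{\Xsbt} \le C\,\nor{u}{\Xsbt}$.

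The only point requiring genuine care is the multiplier estimate on $H^b(\R)$ for a general \emph{real} exponent $b$, in particular the negative values (such as $-b'$) occurring elsewhere in the paper, where a naive Leibniz-rule argument is unavailable. The Peetre-plus-Young computation above handles all real $b$ uniformly and is, I expect, the crux; the commutation identity and the restriction argument are routine bookkeeping.
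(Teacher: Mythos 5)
Your proof is correct and follows the same route as the paper: conjugate by $e^{-it\partial_x^2}$ to reduce to boundedness of multiplication by $\psi$ on $H^b(\R,H^s(\Tu))$, then pass to restriction spaces by applying the result to an arbitrary extension and taking the infimum. The only difference is that the paper asserts the $H^b$ multiplier bound without proof, whereas you supply the standard Peetre--Young justification (valid for all real $b$), which is a welcome but inessential addition.
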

\begin{proof}
We write
$$\left\|\psi u\right\|_{X^{s,b}}= \left\|e^{-it\partial_x^2}\psi(t)u\right\|_{H^{b}(H^s)}= \left\|\psi u^{\#}\right\|_{H^{b}(H^s)}\leq C  \left\|u^{\#}\right\|_{H^{b}(H^s)}\leq C\left\|u\right\|_{X^{s,b}}.$$
We get the second result by applying the first one on any extension of $u$ and taking the infinimum.\end{proof}

We easily get that $D^r$ (using notation (\ref{Dr})) maps any $X^{s,b}$ into $X^{s-r,b}$. In the case of multiplication by $C^{\infty}(\Tu)$ function, we have to deal with a loss in $\Xsb$ regularity compared to what we could expect. Some regularity in the index $b$ is lost, due to the fact that multiplication does not keep the structure in time of the harmonics. This loss is unavoidable : take  $u_n=\psi(t)e^{inx}e^{in^2t}$ (where $\psi \in C^{\infty}_0(\R)$ equal to $1$ on $[-1,1]$) which is uniformly bounded in $X^{0,b}$ for every $b\geq 0$. Yet, if we consider the operator of multiplication by $e^{ix}$, we get $\left\|e^{ix}u_n\right\|_{X^{0,b}} \approx n^b$. We can prove that our example is the worst one.
\begin{lmm}
\label{lemmepseudoxsb}
Let $ -1 \leq b \leq 1$, $s\in \R$ and $\varphi \in C^{\infty}(\Tu)$. Then, if $u\in \Xsb$ we have $\varphi(x) u \in X^{s-|b|,b}$.\\
Similarly, multiplication by $\varphi$ maps $\Xsbt$ into $X^{s-|b|,b}_T$.
\end{lmm}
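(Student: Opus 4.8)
The plan is to reduce the statement to a weighted $L^2(\R_\tau;\ell^2_k(\Z))$ estimate on the space–time Fourier side and prove it by Schur's lemma, uniformly in the time frequency $\tau$. Since $\varphi$ depends only on $x$, multiplication by $\varphi$ acts as a convolution in the discrete space frequency alone and leaves the time frequency untouched:
$$\widehat{\widehat{\varphi u}}(\tau,k) = \sum_{m} \widehat\varphi(k-m)\,\widehat{\widehat u}(\tau,m),$$
where the $\widehat\varphi(j)$ are the Fourier coefficients of $\varphi$, rapidly decreasing since $\varphi\in C^\infty(\Tu)$. This is exactly where the loss originates: the output mode $k$ carries modulation $\langle\tau+k^2\rangle$ while the input mode $m$ carries $\langle\tau+m^2\rangle$, and for fixed $\tau$ these weights differ. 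Replacing $|k|^s$ by the equivalent $\langle k\rangle^s$ (the two norms agree up to the zero mode, handled with weight $\langle 0\rangle=1$ and absorbed into the same estimate), it suffices to bound, uniformly in $\tau$, the operator on $\ell^2(\Z)$ with kernel
$$\hat K_\tau(k,m) = \frac{\langle k\rangle^{s-|b|}\langle\tau+k^2\rangle^{b}}{\langle m\rangle^{s}\langle\tau+m^2\rangle^{b}}\,\widehat\varphi(k-m),$$
and then integrate in $\tau$ by Plancherel.

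The key point is the elementary modulation inequality, valid for every real $b$ and \emph{uniformly in} $\tau$,
$$\langle\tau+k^2\rangle^{b}\langle\tau+m^2\rangle^{-b}\leq C\,\langle k^2-m^2\rangle^{|b|},$$
which follows from $\langle a\rangle\leq C\langle c\rangle\langle a-c\rangle$ (and its reverse) applied with $a=\tau+k^2$, $c=\tau+m^2$, so that the ratio of modulations is controlled by $\langle k^2-m^2\rangle^{|b|}$ regardless of the sign of $b$. Since $k^2-m^2=(k-m)(k+m)$ one has $\langle k^2-m^2\rangle^{|b|}\leq C\langle k-m\rangle^{|b|}\big(\langle k\rangle^{|b|}+\langle m\rangle^{|b|}\big)$, and combining this with Peetre's inequality $\langle k\rangle^s\langle m\rangle^{-s}\leq C\langle k-m\rangle^{|s|}$ gives
$$|\hat K_\tau(k,m)|\leq C\,\langle k-m\rangle^{|s|+2|b|}\,|\widehat\varphi(k-m)| =: \rho(k-m),$$
uniformly in $\tau$. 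Because $\widehat\varphi$ is rapidly decreasing, $\rho\in\ell^1(\Z)$, so Schur's lemma yields $\sup_\tau\|\hat K_\tau\|_{\ell^2\to\ell^2}\leq\sum_j\rho(j)<\infty$.

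From here the proof closes quickly. Writing $g_\tau(m)=\langle m\rangle^{s}\langle\tau+m^2\rangle^{b}\widehat{\widehat u}(\tau,m)$ and $f_\tau(k)=\langle k\rangle^{s-|b|}\langle\tau+k^2\rangle^{b}\widehat{\widehat{\varphi u}}(\tau,k)$, the convolution identity above gives $f_\tau=\hat K_\tau g_\tau$, hence $\|f_\tau\|_{\ell^2_k}\leq C\|g_\tau\|_{\ell^2_m}$ with $C$ independent of $\tau$; integrating $\int_\R(\cdot)\,d\tau$ and using Plancherel in time then produces $\|\varphi u\|_{X^{s-|b|,b}}\leq C\|u\|_{\Xsb}$, after checking the $k=0$ mode against the separate term $\|\widehat{u}(\cdot,0)\|_{H^b(\R)}$ (only finitely many exceptional modes, harmless for Schur). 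The restriction–space statement follows as in Lemma \ref{lemmetps}: apply the estimate to any extension $\tilde u\in\Xsb$ of $u$ with $\tilde u=u$ on $[0,T]\times\Tu$, note that $\varphi\tilde u=\varphi u$ there, and take the infimum over extensions. (Alternatively, one could interpolate the no-loss bound at $b=0$ with a direct $b=1$ computation and dualize for $b<0$, but the Schur argument treats all $-1\leq b\leq 1$ at once.)

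I expect the main obstacle to be precisely the modulation inequality made \emph{uniform in} $\tau$, together with the frequency bookkeeping that turns the kernel into an $\ell^1$ convolution: this is the step that pins the loss down to exactly $|b|$ derivatives, and the sharpness is witnessed by the example $u_n=\psi(t)e^{inx}e^{in^2t}$ already discussed, for which multiplication by $e^{ix}$ costs exactly $n^{b}$. Everything else — Schur's lemma, Plancherel, and the extension argument — is routine.
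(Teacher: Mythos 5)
Your proof is correct, but it takes a genuinely different route from the paper. The paper proves the two endpoint cases directly --- $b=0$ is trivial since $X^{s,0}=L^2(\R,H^s)$, and $b=1$ uses the characterization of $X^{s,1}$ through $i\partial_t u+\partial_x^2u$ together with the fact that the commutator $[\varphi,\partial_x^2]$ is a first-order differential operator --- then identifies $\Xsb$ with a weighted $L^2(\R\times\Z)$ space, invokes the Stein--Weiss complex interpolation theorem to cover $0\le b\le 1$, and finally dualizes to reach $-1\le b\le 0$. You instead work entirely on the space-time Fourier side: multiplication by $\varphi(x)$ is a convolution in $k$ alone, and the uniform-in-$\tau$ modulation transfer $\langle\tau+k^2\rangle^{b}\langle\tau+m^2\rangle^{-b}\le C\langle k^2-m^2\rangle^{|b|}$ combined with $k^2-m^2=(k-m)(k+m)$ and Peetre's inequality reduces everything to a convolution kernel in $\ell^1(\Z)$, handled by Schur/Young. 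Your computation of the kernel bound is right: the factor $\langle k\rangle^{|b|}+\langle m\rangle^{|b|}$ is exactly absorbed by measuring the output in $X^{s-|b|,b}$, which is where the loss of $|b|$ derivatives appears, and the leftover powers of $\langle k-m\rangle$ are killed by the rapid decay of $\widehat\varphi$. What each approach buys: the paper's argument is softer and reuses structural facts (the PDE description of $X^{s,1}$, abstract interpolation, duality), but needs three separate ingredients to cover the full range of $b$; yours is self-contained and elementary, treats all $b\in[-1,1]$ (in fact all real $b$) in a single stroke with no interpolation or duality, and makes the origin of the $|b|$ loss completely explicit --- at the cost of more frequency bookkeeping. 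Both correctly deduce the restriction-space statement by applying the bound to an arbitrary extension and taking the infimum.
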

\begin{proof}
We first deal with the two cases $b=0$ and $b=1$ and we will conclude by interpolation and duality.\\
For $b=0$, $X^{s,0}=L^2(\R,H^s)$ and the result is obvious.\\
For $b=1$, we have $u\in X^{s,1}$ if and only if
 $$u \in L^2(\R,H^s) \textnormal{ and } i\partial_t u+\partial_x^2 u \in L^2(\R,H^s)$$
 with the norm
 $$ \left\| u\right\|^2_{X^{s,1}}= \left\|u\right\|^2_{L^2(\R,H^s)}+\left\|i\partial_t u+\partial_x^2 u\right\|^2_{L^2(\R,H^s)}.$$
Then, we have 
\bna
\left\|\varphi(x) u\right\|^2_{X^{s-1,1}}&=&\left\|\varphi u\right\|^2_{L^2(\R,H^{s-1})}+ \left\|i\partial_t (\varphi u)+\partial_x^2(\varphi u)\right\|^2_{L^2(\R,H^{s-1})}\\
&\leq& C\left(\left\|u\right\|^2_{L^2(\R,H^{s-1})}+ \left\|\varphi \left(i\partial_t u+\partial_x^2 u\right)\right\|^2_{L^2(\R,H^{s-1})}\right.\\
& &\left.+ \left\|\left[\varphi,\partial_x^2 \right]u\right\|^2_{L^2(\R,H^{s-1})}\right)\\
&\leq & C\left(\left\|u\right\|^2_{L^2(\R,H^{s-1})}+ \left\|i\partial_t u+\partial_x^2 u\right\|^2_{L^2(\R,H^{s-1})}+ \left\|u\right\|^2_{L^2(\R,H^{s})}\right)\\
&\leq & C\left\|u\right\|^2_{X^{s,1}}.
\ena
Here, we have used that $\left[\varphi,\partial_x^2 \right]=-2(\partial_x \varphi) \partial_x- (\partial^2_x \varphi)$ is a differential operator of order $1$. To conclude, we prove that $\Xsb$ spaces are in interpolation. For that, we consider $\Xsb$ as a weighted $L^2(\R \times \Z,\mu \otimes \delta )$ spaces, where $\mu$ is the Lebesgues measure on $\R$ and $\delta$ is the discret measure on $\Z$. Using the Fourier transform, we can interpret $\Xsb$ as the weighted $L^2$ space
$$L^2\left(\R \times \Z,w_{s,b}(\tau,k)\mu \otimes \delta \right) $$
where $w_{s,b}(\tau,k)=\left| k\right|_{\wr}^{2s} \left\langle \tau+k^2\right\rangle^{2b}$. Here, we denote 
\bnan
\label{notationnorm}
\left| k\right|_{\wr}=\left|k\right| \textnormal{ if } k\neq 0 \textnormal{ and } 1 \textnormal{ otherwise.}
\enan
Then, we use the complex interpolation theorem of Stein-Weiss for weighted $L^p$ spaces (see \cite{Bergh} p 114 ) : for $0<\theta <1$
$$\left(X^{s,0},X^{s',1}\right)_{[\theta]} \approx L^2\left(\R \times \Z,\left| k\right|_{\wr}^{2s(1-\theta)+2s'\theta} \left\langle \tau+k^2\right\rangle^{2\theta	}\mu \otimes \delta \right)\approx X^{s(1-\theta)+s'\theta,\theta}.$$
Since $\varphi$ maps $X^{s,0}$ into $X^{s,0}$ and $X^{s,1}$ into $X^{s-1,1}$, we conclude that for $0\leq b \leq 1$, $\varphi$ maps $\Xsb=\left(X^{s,0},X^{s,1}\right)_{[b]}$ into  \mbox{$\left(X^{s,0},X^{s-1,1}\right)_{[b]} =X^{s-b,b}$}  which yields the $b$ loss of regularity as announced.\\
Then, by duality, this also implies that for $0\leq b \leq 1$, $\varphi(x)$ maps $X^{-s+b,-b}$ into $X^{-s,-b}$. As there is no assumption on $s\in \R$, we also have the result for $-1\leq b \leq 0$ with a loss $-b=|b|$.\\
To get the same result for the restriction spaces $\Xsbt$, we write the estimate for an extension $\tilde{u}$ of $u$, which yields
\bna
\left\|\varphi u\right\|_{X^{s-|b|,b}_T} \leq \left\|\varphi \tilde{u}\right\|_{X^{s-|b|,b}} \leq C  \left\|\tilde{u}\right\|_{X^{s,b}}. 
\ena
Taking the infinimum on all the $\tilde{u}$, we get the claimed result.\end{proof}

\bigskip

We will also use (see \cite{ginibre} or \cite{Bourgain})
\begin{lmm}
\label{gainint}
Let $(b,b')$ satisfying
\begin{eqnarray}
0<b'<\frac{1}{2}<b,~~~~b+b'\leq 1. 
\end{eqnarray}
If we note $F(t)=\Psi\left(\frac{t}{T}\right)\intt f(t')dt'$, we have for $T\leq 1$
\bna
\left\|F\right\|_{H^b} \leq CT^{1-b-b'}\left\|f\right\|_{H^{-b'}}.
\ena
\end{lmm}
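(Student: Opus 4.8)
The plan is to eliminate the parameter $T$ by a time rescaling, reduce to the scale--invariant case $T=1$, and treat that case by splitting the antiderivative in frequency. First I would set $u(s)=F(Ts)$; the change of variables $t'=Ts'$ gives $u(s)=T\,\widetilde F(s)$ with $\widetilde F(s)=\Psi(s)\int_0^s\widetilde f(s')\,ds'$ and $\widetilde f(s)=f(Ts)$, so that $\widetilde F$ is exactly the operator of the statement at scale $T=1$ applied to $\widetilde f$.

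For the bookkeeping I would use only the elementary inequalities $T\langle\sigma\rangle\le\langle T\sigma\rangle\le\langle\sigma\rangle$, valid for $T\le 1$. Computing the three Sobolev norms on the Fourier side yields
\[ \norm{F}_{H^b}\le T^{1/2-b}\norm{u}_{H^b},\qquad \norm{u}_{H^b}=T\,\norm{\widetilde F}_{H^b},\qquad \norm{\widetilde f}_{H^{-b'}}\le T^{-1/2-b'}\norm{f}_{H^{-b'}}. \]
Since $T^{1/2-b}\cdot T\cdot T^{-1/2-b'}=T^{1-b-b'}$, the announced estimate follows once the scale--invariant bound $\norm{\widetilde F}_{H^b}\le C\norm{\widetilde f}_{H^{-b'}}$ is established; the sharp exponent is produced purely by this collection of scaling powers.

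It then remains to prove the case $T=1$, i.e. $\norm{\Psi(s)\int_0^s g}_{H^b}\le C\norm{g}_{H^{-b'}}$. Working first with Schwartz $g$ and arguing by density, I would write $\int_0^s g(s')\,ds'=\int_\R \frac{e^{is\tau}-1}{i\tau}\,\widehat g(\tau)\,\frac{d\tau}{2\pi}$ and split the multiplier with a fixed cutoff $\chi\in C_0^\infty(\R)$ equal to $1$ near $0$, as
\[ \frac{e^{is\tau}-1}{i\tau}=\frac{e^{is\tau}}{i\tau}\,(1-\chi(\tau))-\frac{1-\chi(\tau)}{i\tau}+\frac{e^{is\tau}-1}{i\tau}\,\chi(\tau). \]
The first piece gives $\Psi\,G_1$ with $\widehat{G_1}=\frac{1-\chi}{i\tau}\widehat g$; because $b+b'\le 1$ one has $\langle\tau\rangle^{2b}\tau^{-2}\le C\langle\tau\rangle^{-2b'}$ away from the origin, so $\norm{G_1}_{H^b}\le C\norm{g}_{H^{-b'}}$, and multiplication by the fixed function $\Psi$ is bounded on $H^b$. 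The second piece is a constant times $\Psi$, the constant being controlled by Cauchy--Schwarz by $\norm{g}_{H^{-b'}}\,(\int_{|\tau|\ge 1}\langle\tau\rangle^{2b'-2}\,d\tau)^{1/2}$, an integral that converges exactly because $b'<1/2$. The third piece involves only the band--limited function $g_{\mathrm{low}}$ with $\widehat{g_{\mathrm{low}}}=\chi\,\widehat g$, whose $L^2$ norm is dominated by $\norm{g}_{H^{-b'}}$; since $\Psi$ has compact support, $\Psi(s)\int_0^s g_{\mathrm{low}}$ is bounded in $H^1$, and a fortiori in $H^b$ for $b\le 1$.

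The hard part is the zero--frequency behavior of the antiderivative: the factor $1/\tau$ is precisely what forces $b'<1/2$ (convergence of the constant term) and $b+b'\le 1$ (the high--frequency term), and it is also why one cannot simply place the cutoff $\Psi(\cdot/T)$ on the unscaled antiderivative, its $H^b$--multiplier norm degenerating like $T^{-b}$ as $T\to0$. Rescaling to $T=1$ before truncating is what removes this degeneration, while the three scaling powers reproduce the sharp exponent $1-b-b'$.
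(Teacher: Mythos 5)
Your proof is correct. Note that the paper does not actually prove this lemma: it is quoted from Ginibre's Bourbaki seminar (and Bourgain), so there is no in-paper argument to match against. Your write-up is a complete, self-contained version of the standard proof, with one organizational difference from the usual reference argument: you first rescale to $T=1$ (extracting the full factor $T^{1-b-b'}$ from the three elementary scaling identities, using $T\langle\sigma\rangle\le\langle T\sigma\rangle\le\langle\sigma\rangle$ for $T\le1$) and then perform a frequency splitting at a \emph{fixed} scale, whereas Ginibre splits $\widehat f$ directly at frequency $\sim 1/T$ and tracks the powers of $T$ through each of the resulting pieces. The two routes are equivalent in content, but yours cleanly isolates where each hypothesis enters: $b+b'\le1$ for the high-frequency multiplier $\langle\tau\rangle^{2b}\tau^{-2}\le C\langle\tau\rangle^{-2b'}$, $b'<\frac12$ for the convergence of the constant term, and $b<1$ (forced by $b+b'\le1$, $b'>0$) for placing the low-frequency piece in $H^1\hookrightarrow H^b$. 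All the individual estimates check out. The only point worth a sentence more of care is the density step: for general $g\in H^{-b'}$ one should note that the multiplier formula $\int_{\R}\frac{e^{is\tau}-1}{i\tau}\widehat g(\tau)\,d\tau$ itself converges absolutely (bounded multiplier near $\tau=0$, Cauchy--Schwarz with $b'<\frac12$ at infinity), so that the limit of the Schwartz approximations is the intended primitive; this is routine but should be said.
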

 
In the futur aim of using a boot-strap argument, we will need some continuity in $T$ of the $X^{s,b}_T$ norm of a fixed function : 
\begin{lmm}
\label{continuiteXsbt}
Let $0<b<1$ and $u$ in $X^{s,b}$ then the function
\begin{eqnarray*}
\left\lbrace
\begin{array}{rcrcl}%argument r=alignement à droite puis center et left
f&:&]0,T]& \longrightarrow &  \R \\
 & &t    & \longmapsto     & \left\|u\right\|_{X^{s,b}_{t}}
\end{array}
\right.
\end{eqnarray*} 
 is continuous.
Moreover, if $b>1/2$, there exists $C_b$ such that 
$$\lim_{t\rightarrow 0} f(t) \leq C_b \nor{u(0)}{H^s}.$$
 \end{lmm}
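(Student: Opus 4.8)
The plan is to reduce the statement to a question about Hilbert-valued fractional Sobolev spaces and then combine monotonicity with the two one-sided limits. Writing $w = u^{\#}$, the isometry $u \mapsto u^{\#}$ from $\Xsb$ onto $H^b(\R,H^s(\Tu))$ identifies the constraint $\tilde u = u$ on $[0,t]\times\Tu$ with $\tilde u^{\#}=w$ on $[0,t]$, since for each fixed time slice the free evolution is a fixed bijection. Hence $f(t)=\|u\|_{X^{s,b}_t}$ equals the restriction norm $\inf\{\|\tilde w\|_{H^b(\R,H^s)} : \tilde w = w \text{ on } [0,t]\}$. Because $w$ is itself a global extension we have $f(t)\leq \|u\|_{\Xsb}<\infty$, and $f$ is nondecreasing, as any admissible extension for $[0,t_2]$ is admissible for $[0,t_1]$ whenever $t_1\leq t_2$. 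Thus $f$ has one-sided limits everywhere and it suffices to establish left- and right-continuity separately, together with the quantitative limit at $0$.

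For left-continuity at $t_0$, I would take $t_n \uparrow t_0$ and near-optimal extensions $\tilde w_n = w$ on $[0,t_n]$ with $\|\tilde w_n\|_{H^b(\R,H^s)} \to \lim_{t\to t_0^-} f(t)$. These are bounded, so a subsequence converges weakly to some $v$ in $H^b(\R,H^s)$; testing against functions supported in $[0,\tau^*]$ for any $\tau^*<t_0$ and letting $\tau^*\uparrow t_0$ forces $v=w$ almost everywhere on $[0,t_0]$. Then $v$ is an admissible extension for $t_0$, and weak lower semicontinuity of the norm gives $f(t_0)\leq \|v\|_{H^b}\leq \lim_{t\to t_0^-}f(t)$, which with monotonicity yields equality.

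Right-continuity is the delicate point, since it requires \emph{constructing} extensions on slightly larger intervals. Fixing $t_0$ and a near-optimal extension $\tilde w = w$ on $[0,t_0]$ with $\|\tilde w\|_{H^b}\leq f(t_0)+\e$, I would glue for small $h>0$ by setting $v_h = \tilde w + \eta_h (w-\tilde w)$, where $\eta_h \in C^\infty_0$ equals $1$ on $[0,t_0+h]$, is supported in $(-h,t_0+2h)$, and satisfies $|\eta_h'|\lesssim 1/h$. Then $v_h = w$ on $[0,t_0+h]$, so $f(t_0+h)\leq \|\tilde w\|_{H^b} + \|\eta_h(w-\tilde w)\|_{H^b}$. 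Since $w-\tilde w$ vanishes on $[0,t_0]$, the product $\eta_h(w-\tilde w)$ is supported in the shrinking set $(-h,0)\cup(t_0,t_0+2h)$. The main obstacle is precisely to prove $\|\eta_h(w-\tilde w)\|_{H^b(\R,H^s)}\to 0$ as $h\to 0$: this is a fractional-multiplication estimate for $0<b<1$, where the $1/h$ blow-up of the cutoff derivative must be beaten by the smallness of $w-\tilde w$ on the support of $\eta_h$. I would carry it out using the Gagliardo--Slobodeckij characterization of the $H^b$ seminorm together with absolute continuity of the integral over the shrinking domain. Granting this, $\limsup_{h\to 0^+} f(t_0+h)\leq f(t_0)+\e$, and letting $\e\to 0$ completes right-continuity.

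For the final estimate when $b>1/2$, I would invoke the embedding $H^b(\R,H^s)\hookrightarrow C^0(\R,H^s)$, so that $w$ has a continuous representative with $w(0)=u(0)$. Choosing $\Psi\in C^\infty_0(\R)$ equal to $1$ near $0$ and writing $w = \Psi\,w(0) + r$ with $r(0)=0$, the triangle inequality for the restriction norm gives, for $t$ small enough that $\Psi\equiv 1$ on $[0,t]$, the bound $f(t)\leq \|\Psi\,w(0)\|_{H^b(\R,H^s)} + \|r\|_{H^b([0,t];H^s)}$, where the first term is exactly $\|\Psi\|_{H^b}\,\|u(0)\|_{H^s}=:C_b\,\|u(0)\|_{H^s}$. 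It then remains to show that the restriction norm of $r$ on $[0,t]$ tends to $0$ as $t\to 0$; this is again a localization statement exploiting $r(0)=0$, handled by constructing an explicit extension of $r|_{[0,t]}$ supported in a comparable neighborhood of $0$ and controlling its $H^b$ norm by the vanishing Slobodeckij mass of $r$ over $[0,t]$. Combining the two terms yields $\lim_{t\to 0} f(t)\leq C_b\,\|u(0)\|_{H^s}$, which is the asserted inequality.
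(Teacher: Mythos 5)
Your proposal is correct in outline but takes a genuinely different, and considerably more detailed, route than the paper. The paper reduces the statement, Fourier mode by Fourier mode in $x$, to the scalar claim for $H^b(\R)$ and then concentrates essentially only on the limit at $t=0$: the part of $u^{\#}$ vanishing at $0$ is written as $\int_0^t f$ with $f\in H^{b-1}$, Lemma \ref{gainint} gives the decay when $u$ is slightly smoother than $H^b$, and the general case follows by density combined with the uniform boundedness of multiplication by $\Psi(t/T)$; continuity at interior times is treated as the easy case and left without detail. You instead work directly in $H^b(\R,H^s)$ via $u^{\#}$, and your monotonicity plus weak-compactness argument for left-continuity and your gluing construction for right-continuity supply exactly the details the paper omits, which is a genuine gain in completeness. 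The trade-off is that both of your localization claims --- that $\|\eta_h(w-\tilde w)\|_{H^b}\to 0$ and that the restriction norm of $r$ on $[0,t]$ vanishes --- are precisely where the work lies, and ``absolute continuity of the integral over the shrinking domain'' does not by itself close them: in the Gagliardo seminorm the cutoff contributes a term of size $h^{-2b}\int_{E_h}|w-\tilde w|^2$ over the transition set $E_h$, and to beat the $h^{-2b}$ you need a fractional Hardy inequality (available because $w-\tilde w$ vanishes on all of $[0,t_0]$, respectively because $r(0)=0$ and $b>1/2$), with the case $b=1/2$ requiring extra care. The paper's mechanism --- prove the localized estimate for smoother data via Lemma \ref{gainint}, then use uniform boundedness of the cutoff operator on the subspace of functions vanishing at the endpoint and conclude by density --- discharges both steps more cheaply, and you could substitute it for the Slobodeckij computation while keeping the rest of your architecture.
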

\begin{proof}
By reasoning on each component on the basis, we are led to prove the result in $H^b(\R)$. The most difficult case is the limit near $0$. It suffices to prove that if $u\in H^b(\R)$, with $b>1/2$, satisfies $u(0)=0$, and $\Psi \in C^{\infty}_0(\R)$ with $\Psi(0)=1$, then
$$\Psi\left(\frac{t}{T}\right)u \tend{T}{0} 0 \quad \textnormal{in} \quad H^b.$$
Indeed, such a function $u$ can be written $\intt f$ with $f\in H^{b-1}$. Then, Lemma \ref{gainint} gives the result we want if $u\in H^{b+\varepsilon}$. Nevertheless, if we only have $u\in H^b$, $\Psi(\frac{t}{T})u$ is uniformly bounded. We conclude by a density argument.\end{proof}

\bigskip

The following lemma will be useful to control solutions on large intervals that will be obtained by piecing together solutions on smaller ones. We state it without proof.
\begin{lmm}
\label{lemmerecouvrement}
Let $0<b<1$. If $\bigcup ]a_k,b_{k}[$ is a finite covering of $[0,1]$, then there exists a constant $C$ depending only of the covering such that for every $u\in X^{s,b}$
\begin{eqnarray*}
\left\|u\right\|_{X^{s,b}_{[0,1]}}\leq C\sum_k \left\|u\right\|_{X^{s,b}_{[a_k,b_{k}]}}.
\end{eqnarray*}
\end{lmm}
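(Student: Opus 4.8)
The plan is to reduce the statement to Lemma \ref{lemmetps} by means of a smooth partition of unity subordinate to the cover. The point to keep in mind is that the $X^{s,b}$ norm is \emph{not} local in time: it contains an $H^b(\R)$ norm in the time variable, which for $0<b<1$ is a genuine fractional norm and therefore does not split additively across the interfaces of the subintervals. For this reason one cannot simply restrict and add the pieces; the gluing must be performed with smooth cut-offs, and the boundedness of multiplication by a $C^\infty_0$ time function provided by Lemma \ref{lemmetps} is exactly the tool that makes this work.

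First I would fix, once and for all, a smooth partition of unity $(\chi_k)$ subordinate to the finite open cover $\bigcup ]a_k,b_k[$ of the compact set $[0,1]$, so that $\chi_k \in C^\infty_0(]a_k,b_k[)$ and $\sum_k \chi_k \equiv 1$ on a neighbourhood of $[0,1]$. Since $u\in X^{s,b}$, each $\chi_k(t)\,u$ again belongs to $X^{s,b}$ by Lemma \ref{lemmetps}, and the function $v := \sum_k \chi_k u$ coincides with $u$ on $[0,1]$ because $\sum_k \chi_k = 1$ there. Hence $v$ is an admissible extension of $u|_{[0,1]}$, so by the very definition of the restriction norm together with the triangle inequality
$$
\|u\|_{X^{s,b}_{[0,1]}} \leq \|v\|_{X^{s,b}} \leq \sum_k \|\chi_k u\|_{X^{s,b}}.
$$

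It then remains to bound each term by the corresponding local restriction norm. Here I would exploit that $\chi_k$ is supported \emph{strictly inside} the open interval $]a_k,b_k[$: for any extension $\tilde u_k \in X^{s,b}$ with $\tilde u_k = u$ on $[a_k,b_k]$ one has $\chi_k \tilde u_k = \chi_k u$ identically, since the two functions agree wherever $\chi_k\neq 0$ and both vanish off the support of $\chi_k$. Lemma \ref{lemmetps} then gives $\|\chi_k u\|_{X^{s,b}} = \|\chi_k \tilde u_k\|_{X^{s,b}} \leq C_k \|\tilde u_k\|_{X^{s,b}}$, and taking the infimum over all such $\tilde u_k$ yields $\|\chi_k u\|_{X^{s,b}} \leq C_k \|u\|_{X^{s,b}_{[a_k,b_k]}}$. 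Combining with the previous display proves the claim with $C = \sum_k C_k$, a constant depending only on the chosen cut-offs and hence only on the covering.

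The only genuinely delicate point is conceptual rather than computational: one must recognize that naive additivity over subintervals fails for the fractional time norm and replace it by this soft-cutoff argument, whose correctness hinges on two facts, namely the mapping property of Lemma \ref{lemmetps} and the compatibility of a cut-off supported strictly inside $]a_k,b_k[$ with the infimum defining the restriction space. Everything else is routine bookkeeping.
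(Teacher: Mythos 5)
Your proof is correct. Note that the paper states this lemma explicitly without proof, so there is no argument of the author's to compare against; your partition-of-unity argument is the standard way to fill the gap, and each step checks out: the identity $\chi_k\tilde u_k=\chi_k u$ for any extension $\tilde u_k$ (valid because $\operatorname{supp}\chi_k\subset\, ]a_k,b_k[$) legitimately passes the bound through the infimum defining $\|u\|_{X^{s,b}_{[a_k,b_k]}}$, and Lemma \ref{lemmetps} supplies the boundedness of the time cut-offs with constants depending only on the $\chi_k$, hence only on the covering.
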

Finally, we have the following Rellich type lemma
\begin{lmm}
\label{injectioncompacte}
For every $\delta >0$, $\eta>0$, $s$, $b\in \R$ and $T>0$, we have
$$X^{s+\eta,b+\delta}_T \subset  X^{s,b}_T$$
with compact imbedding.
\end{lmm}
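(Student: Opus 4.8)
The plan is to work through the isometry $u \mapsto u^{\#}$, which identifies $\Xsb$ with $H^{b}(\R,H^s(\Tu))$ and turns the modulation weight into the fully separated weight $\langle\tau\rangle^{2b}\left| k\right|_{\wr}^{2s}$; in these coordinates, compactness in the spatial frequency $k$ and in the time frequency $\tau$ can be treated independently. Let me take a sequence $(u_n)$ bounded in $X^{s+\eta,b+\delta}_T$ and produce a subsequence converging in $\Xsbt$. First I would reduce to uniformly compactly supported extensions: choose extensions $\tilde u_n\in X^{s+\eta,b+\delta}$ with $\|\tilde u_n\|_{X^{s+\eta,b+\delta}}\leq 2\|u_n\|_{X^{s+\eta,b+\delta}_T}$, pick $\psi\in C^{\infty}_0(\R)$ with $\psi\equiv 1$ on $[0,T]$, and replace $\tilde u_n$ by $\psi\tilde u_n$. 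By Lemma \ref{lemmetps} these are still bounded extensions of $u_n$, and since $e^{-it\partial_x^2}$ acts pointwise in $t$, the functions $v_n:=(\psi\tilde u_n)^{\#}$ are bounded in $H^{b+\delta}(\R,H^{s+\eta}(\Tu))$ with support in the fixed compact set $\mathrm{supp}\,\psi$. As the restriction norm is dominated by any extension norm, it suffices to extract a subsequence of $(v_n)$ converging in $H^{b}(\R,H^s(\Tu))$.

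Next I would peel off the spatial high frequencies using the $\eta$-gain: writing $\left| k\right|_{\wr}^{2s}=\left| k\right|_{\wr}^{-2\eta}\left| k\right|_{\wr}^{2(s+\eta)}$ and $\langle\tau\rangle^{2b}\leq\langle\tau\rangle^{2(b+\delta)}$, the tail over $|k|>N$ of $\|v_n\|^2_{H^{b}(H^s)}$ is bounded by $N^{-2\eta}\|v_n\|^2_{H^{b+\delta}(H^{s+\eta})}$, hence uniformly small. This reduces the problem to the finitely many modes $|k|\leq N$, i.e. to a finite collection of scalar coefficients $c^k_n(t)=\widehat{v_n}(t,k)$, each bounded in $H^{b+\delta}(\R_t)$ for fixed $k$ and supported in the fixed compact set in $t$.

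The heart of the matter is the scalar claim that a bounded sequence in $H^{b+\delta}(\R)$ with support in a fixed compact set $K$ is precompact in $H^{b}(\R)$; this is where the restriction to a bounded time interval is essential, since $H^{b+\delta}(\R)\hookrightarrow H^{b}(\R)$ fails to be compact on the whole line. To prove it I would pass to the time Fourier transform $g_n=\widehat{c^k_n}$: by Paley--Wiener--Schwartz each $g_n$ is smooth, and multiplication by a fixed cutoff $\chi$ equal to $1$ on $K$ shows $g_n$ and $\partial_\tau g_n$ are uniformly polynomially bounded, by writing $g_n(\tau)=\langle c^k_n,e^{-it\tau}\chi\rangle$ and $\partial_\tau g_n(\tau)=\langle -it\,c^k_n,e^{-it\tau}\chi\rangle$ and using that these pairings are controlled by $\|c^k_n\|_{H^{b+\delta}}\langle\tau\rangle^{|b+\delta|}$. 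Arzel\`a--Ascoli then yields a locally uniformly convergent subsequence, hence convergence in $L^2$ on every bounded $\tau$-interval; combined with the uniformly small tail $\int_{|\tau|>R}\langle\tau\rangle^{2b}|g_n|^2\leq\langle R\rangle^{-2\delta}\|c^k_n\|^2_{H^{b+\delta}}$, this gives a Cauchy subsequence in $H^{b}(\R)$. A diagonal extraction over the finitely many modes $|k|\leq N$, together with the uniform spatial tail bound, shows that $(v_n)$ is Cauchy in $H^{b}(\R,H^s(\Tu))$; transporting back through $u\mapsto u^{\#}$ and restricting to $[0,T]$ gives a subsequence of $(u_n)$ convergent in $\Xsbt$. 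The only delicate point, handled above, is that the gain in the time index produces compactness only because the supports are confined to a fixed compact interval.
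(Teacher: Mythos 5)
Your argument is correct. Note that the paper states this lemma without proof (it is introduced only as "the following Rellich type lemma"), so there is no argument of the author's to compare against; yours is a complete and self-contained justification. The structure is the natural one: conjugating by $u\mapsto u^{\#}$ to separate the weights, cutting off in time via Lemma \ref{lemmetps} to get extensions with uniformly compact support, using the $\eta$-gain to make the spatial tail $|k|>N$ uniformly small, and reducing to the scalar fact that a bounded sequence in $H^{b+\delta}(\R)$ supported in a fixed compact set is precompact in $H^{b}(\R)$ --- which you correctly identify as the only genuinely nontrivial point, since the inclusion $H^{b+\delta}(\R)\subset H^{b}(\R)$ is not compact without the support constraint. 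The Paley--Wiener/Arzel\`a--Ascoli treatment of that scalar claim is sound (the polynomial bounds on $g_n$ and $\partial_\tau g_n$ follow from pairing against $e^{-it\tau}\chi$ in $H^{-b-\delta}$ exactly as you say), and the final extraction should be read as a diagonal over $N\to\infty$, with finitely many successive extractions at each fixed $N$; combined with the uniform tail bounds in $k$ and $\tau$ this gives a Cauchy subsequence in $H^{b}(\R,H^{s}(\Tu))$, hence in $\Xsbt$ after restriction. No gaps.
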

%-------------------------------------------------------------------------
\section{Existence of a solution to NLS with source and damping term}
%-------------------------------------------------------------------------
\label{sectionexistence}
\begin{thrm}
\label{thmexistenceNl}
Let $T>0$, $s\geq 0$, $\lambda \in \R$ and $a\in C^{\infty}(\Tu)$, $\varphi \in  C_0^{\infty}(\R)$ taking real values.\\
For every $g\in L^2([-T,T],H^s)$ and $u_0 \in H^s$, there exists a unique solution $u$ in $X^{s,b}_T$ to 
\begin{eqnarray}
\label{dampedeqn}
\left\lbrace
\begin{array}{rcl}%argument r=alignement à droite puis center et left
i\partial_t u + \partial_x^2 u +i\varphi(t)^2a(x)^2 u&=& \lambda |u|^2u+g \textnormal{ on } [-T,T]\times \Tu\\
u(0)&=&u_{0} \in H^s
\end{array}
\right.
\end{eqnarray}
Moreover the flow map $$
\begin{array}{rcrcl}
F &:& H^s(\Tu) \times L^2([-T,T],H^s(\Tu))&\rightarrow & X^{s,b}_{[-T,T]}\\
           & & (u_0,g) &\mapsto   &  u
\end{array}
$$ is Lipschitz on every bounded subset.\\
The same results occur for $s=0$ with the weaker assumption $a\in L^{\infty}(\Tu)$.
\end{thrm}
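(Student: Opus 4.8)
The plan is to construct the solution by a fixed point argument in the restriction space $X^{s,b}_T$, treating both the nonlinearity $\lambda|u|^2u$ and the damping term $i\varphi(t)^2a(x)^2u$ as perturbations handled via the Duhamel formula. First I would reduce to an integral formulation: using a cutoff $\Psi\in C_0^{\infty}(\R)$ equal to $1$ on $[-1,1]$, define the map
\begin{eqnarray*}
\Phi(u)(t)=\Psi\left(\frac{t}{T}\right)\left[e^{it\partial_x^2}u_0 -i\int_0^t e^{i(t-t')\partial_x^2}\left(\lambda|u|^2u+g-i\varphi(t')^2a^2u\right)dt'\right],
\end{eqnarray*}
so that a fixed point of $\Phi$ restricted to $[-T,T]$ solves (\ref{dampedeqn}). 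The key estimates are then: the free evolution bound $\|\Psi(t/T)e^{it\partial_x^2}u_0\|_{X^{s,b}}\leq C\|u_0\|_{H^s}$ from Lemma \ref{lemmetps}; the smoothing gain from Lemma \ref{gainint}, giving a factor $T^{1-b-b'}$ when estimating the Duhamel term from $X^{s,-b'}_T$ into $X^{s,b}_T$; and the multilinear estimates (\ref{multilinL2})--(\ref{multilinL2Hs}) of Lemma \ref{lemmemultilin} to control $\||u|^2u\|_{X^{s,-b'}_T}$. The source term $g\in L^2([-T,T],H^s)=X^{s,0}_T\hookrightarrow X^{s,-b'}_T$ is immediate.

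The damping term is the genuinely new ingredient here relative to the standard Bourgain well-posedness theory, and I expect it to be the main obstacle. Since multiplication by the fixed spatial function $a(x)^2\in C^{\infty}(\Tu)$ (or merely $L^{\infty}$ when $s=0$) does not preserve the $b$-index — it costs $|b|$ derivatives by the sharp Lemma \ref{lemmepseudoxsb} — one cannot simply bound $\|\varphi^2a^2u\|_{X^{s,-b'}_T}$ by $\|u\|_{X^{s,b}_T}$ with a small constant. The plan is to exploit that $\varphi(t)^2$ supplies a time cutoff so that, combined with the $T^{1-b-b'}$ smoothing from Lemma \ref{gainint}, the contribution of the damping term comes with a positive power of $T$; then I would absorb it on a short interval. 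More carefully, I would estimate the linear damping term by placing it in $X^{s,0}_T=L^2([-T,T],H^s)$ (where multiplication by $a^2$ is harmless, since $a^2\in L^{\infty}$ for $s=0$ and $a^2\in C^{\infty}$ for $s>0$ acts boundedly on $H^s$) and using $X^{s,0}_T\hookrightarrow X^{s,-b'}_T$ with the gain $T^{1-b'}$ from Lemma \ref{gainint}, avoiding the $b$-loss entirely since the input $u$ is controlled in $L^2([-T,T],H^s)$ via $X^{s,b}_T\hookrightarrow C([-T,T],H^s)\hookrightarrow L^2([-T,T],H^s)$ for $b>1/2$.

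With these estimates in hand, for $T=T_0$ small enough (depending only on the radius of the ball and on $s$ through the multilinear constant $C_s$) the map $\Phi$ is a contraction on a ball of $X^{s,b}_{T_0}$, yielding a unique local solution; the Lipschitz dependence of the flow on $(u_0,g)$ follows from the same contraction estimates applied to differences, using the difference estimate (\ref{multilinL2Hs}). To upgrade from the small local time $T_0$ to arbitrary $T>0$ I would iterate: the $L^2$ mass identity
\begin{eqnarray*}
\|u(t)\|_{L^2}^2-\|u(0)\|_{L^2}^2=-2\int_0^t\|\varphi(\tau)au(\tau)\|_{L^2}^2\,d\tau\leq 0
\end{eqnarray*}
shows the $L^2$ norm is nonincreasing, so the $L^2$ size never grows and the local existence time $T_0$ can be kept fixed along the iteration, allowing the solution to be extended step by step and glued via Lemma \ref{lemmerecouvrement} to cover $[-T,T]$; for $s>0$ one propagates the $H^s$ bound by the same contraction run in $X^{s,b}$ on each subinterval, the $H^s$ norm growing at most by a controlled factor per step over finitely many steps. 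The case $s=0$ with only $a\in L^{\infty}$ is covered identically, since the only place smoothness of $a$ might be needed is the $H^s$ boundedness of multiplication by $a^2$, which for $s=0$ requires merely $a^2\in L^{\infty}$.
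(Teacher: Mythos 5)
Your overall scheme (Duhamel fixed point in $X^{s,b}_T$, the $T^{1-b-b'}$ gain from Lemma \ref{gainint}, the tame trilinear estimate (\ref{multilinL2}), and in particular placing the damping term in $X^{s,0}_T=L^2([-T,T],H^s)\hookrightarrow X^{s,-b'}_T$ to dodge the $b$-loss of Lemma \ref{lemmepseudoxsb}) is exactly the paper's argument. But your globalization step contains a genuine error: the identity
\begin{equation*}
\|u(t)\|_{L^2}^2-\|u(0)\|_{L^2}^2=-2\int_0^t\|\varphi(\tau)au(\tau)\|_{L^2}^2\,d\tau
\end{equation*}
is only valid for $g=0$. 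With a source term the correct balance is
\begin{equation*}
\|u(t)\|_{L^2}^2-\|u(0)\|_{L^2}^2=-2\int_0^t\|\varphi(\tau)au(\tau)\|_{L^2}^2\,d\tau+2\,\Im\int_0^t\!\!\int_{\Tu}g\bar{u},
\end{equation*}
so the $L^2$ norm is \emph{not} nonincreasing and your assertion that ``the $L^2$ size never grows'' fails. The paper instead absorbs the cross term by Cauchy--Schwarz and Gronwall to get $\|u(t)\|_{L^2}^2\leq C\bigl(\|u_0\|_{L^2}^2+\|g\|_{L^2L^2}^2\bigr)e^{C|t|}$, which is still a global a priori $L^2$ bound and suffices to iterate the local theory; your argument should be repaired the same way. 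Relatedly, when you iterate in $H^s$ you should make explicit that the local existence time is chosen according to the $X^{0,b}$ (equivalently $L^2$) size only, via the tame form of (\ref{multilinL2}); if the time were allowed to depend on the growing $H^s$ norm, the step lengths could a priori shrink and the induction would not obviously reach $T$. The paper handles this with a blow-up alternative: the $H^s$ solution persists as long as the $L^2$ solution does.

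A second, smaller omission: the theorem asserts uniqueness in $X^{s,b}_T$ for the equation itself, whereas your contraction only gives uniqueness for the integral formulation. You still need the (easy but necessary) verification that any $u\in X^{s,b}_T$ solving (\ref{dampedeqn}) in the distributional sense satisfies the Duhamel formula — the paper does this by differentiating the Duhamel integral (using $|u|^2u\in X^{s,-b'}_T$ with $b'<1/2$) and showing the difference $r=e^{-it\partial_x^2}(u-w)$ satisfies $\partial_t r=0$, $r(0)=0$.
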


\begin{proof} It is strongly inspired by Bourgain's one (see \cite{Bourgain}, \cite{Bourgainlivre} and \cite{ginibre}).
First, we notice that if $g\in L^2([-T,T],H^s)$, it also belongs to $X^{s,-b'}_T$ as $b'\geq 0$. We restrict ourself to positive times. The solution on $[-T,0]$ is obtained similarly. The distinction on the case $s=0$ and $s>0$ for the regularity assumption on $a$ will appear along the proof with the following statement : with the assumptions of the Theorem, multiplication by $a$ maps $X^{s,0}=L^2([0,T],H^s)$ into itself.\\
We consider the functional
 $$\Phi(u)(t)=e^{it\partial_x^2}u_0-i\int_0^t e^{i(t-\tau)\partial_x^2}\left[-ia^2\varphi^2 u+\lambda \left|u\right|^2u+ g\right](\tau) d\tau .$$
We will apply a fixed point argument on the Banach space $\Xsbt$.
Let $\psi \in C^{\infty}_0(\R)$ be equal to $1$ on $[-1,1]$. Then by construction, (see \cite{ginibre}) :
$$\left\|\psi(t)e^{it\partial_x^2}u_0\right\|_{\Xsb}=\left\|\psi\right\|_{H^{b}(\R)} \left\|u_0\right\|_{H^s}.$$
Indeed, for $T\leq 1$ we have
$$\left\|e^{it\partial_x^2}u_0\right\|_{\Xsbt}\leq C \left\|u_0\right\|_{H^s}.$$
The one dimensional estimate of Lemma \ref{gainint} implies
$$ \left\|\psi(t/T)\intt e^{i(t-\tau)\partial_x^2}F(\tau)\right\|_{\Xsb}\leq CT^{1-b-b'}\left\|F\right\|_{X^{s,-b'}}$$
and then
\bnan
\label{inegprincip}
&&\left\|\int_0^t e^{i(t-\tau)\partial_x^2}\left[-ia^2\varphi^2 u+\lambda \left|u\right|^2u+ g\right](\tau) d\tau\right\|_{\Xsbt} \nonumber\\
& \leq&  CT^{1-b-b'}\left\|-ia^2\varphi^2 u+\lambda \left|u\right|^2u+ g\right\|_{X^{s,-b'}_T} \nonumber\\
&\leq & CT^{1-b-b'}\left[\left\|\varphi^2a^2 u\right\|_{X^{s,0}_T}+\left\|\left|u\right|^2u\right\|_{X^{s,-b'}_T}+\left\|g\right\|_{X^{s,-b'}_T}\right] \nonumber \\
& \leq & CT^{1-b-b'}\left\|u\right\|_{X^{s,b}_T}\left(1+\left\|u\right\|_{X^{0,b}_T}^2\right)+\left\|g\right\|_{X^{s,-b'}_T}.
\enan
Thus
\bnan
\label{tameestimateDuhamel}
\left\|\Phi(u)\right\|_{\Xsbt}\leq C \left\|u_0\right\|_{H^s}+C\left\|g\right\|_{X^{s,-b'}_T}+CT^{1-b-b'}\left\|u\right\|_{X^{s,b}_T}\left(1+\left\|u\right\|_{X^{0,b}_T}^2\right)
\enan
and similarly,
\bnan
\label{majdiff}
\left\|\Phi(u)-\Phi(\tilde{u})\right\|_{\Xsbt}\leq CT^{1-b-b'}\left\|u-\tilde{u}\right\|_{\Xsbt}\left(1+\left\|u\right\|_{\Xsbt}^2+\left\|\tilde{u}\right\|_{\Xsbt}^2\right).
\enan
These estimates imply that if $T$ is chosen small enough $\Phi$ is a contraction on a suitable ball of $\Xsbt$. \\
Moreover, we have uniqueness in the class $\Xsbt$ for the Duhamel equation. To get the uniqueness in $\Xsbt$ for the Schr\"odinger equation itself, we prove that every solution $u$ in $\Xsbt$ of equation (\ref{dampedeqn}) in the distributional sense is also solution of the integral equation. Let us put 
$$w(t)=e^{it\partial_x^2}u_0-i\int_0^t e^{i(t-\tau)\partial_x^2}\left[-i\varphi^2a^2 u+\lambda \left|u\right|^2u+ g\right](\tau) d\tau .$$
As $u\in \Xsbt$, we have $\left|u\right|^2u \in X^{s,-b'}_T$ and since $b'<1/2$, we infer
\bna
\partial_t \left[\int_0^t e^{-i\tau \partial_x^2}\left[-ia^2\varphi^2 u+\lambda \left|u\right|^2u+ g\right](\tau) d\tau  \right]\\ = e^{-it \partial_x^2}\left[-i\varphi^2a^2\varphi^2 u+\lambda \left|u\right|^2u+ g\right](t)
\ena
in the distributional sense which implies that $w$ is solution of 
$$ i\partial_t w + \partial_x^2 w +i\varphi^2a^2\varphi^2 u=\lambda \left|u\right|^2u+ g .$$
Then, $r=e^{-it\partial_x^2}(u-w)$ is solution of $\partial _t r =0$ and $r(0)=0$. Hence, $r=0$ and  $u$ is solution of the integral equation. Actually, the above proof also gives that the solution $u$ of the integral equation is also solution in the distributional sense.\\
We also prove propagation of regularity.\\
If $u_0\in H^s$, with $s>0$, we have an existence time $T$ for the solution in $\Xzbt$ and another time $\tilde{T}$ for the existence in $X^{s,b}_{\widetilde{T}}$. By uniqueness in $\Xzbt$, the two solutions are the same on $[0,\tilde{T}]$. If we assume $\tilde{T}<T$, we have the explosion of $\left\|u(t,.)\right\|_{H^s}$ as $t$ tends to $\tilde{T}$ whereas $\left\|u(t,.)\right\|_{L^2}$ remains bounded on this interval. Using local existence in $L^2$ and Lemma \ref{lemmerecouvrement}, we easily get that $\left\|u\right\|_{X^{0,b}_{\widetilde{T}}}$ is finite. Then, using tame estimate (\ref{tameestimateDuhamel}) on a subinterval $[\widetilde{T}-\varepsilon,\widetilde{T}]$, with $\varepsilon$ small enough such that $C\varepsilon^{1-b-b'}\left(1+\left\|u\right\|_{X^{0,b}_{[\widetilde{T}-\varepsilon,\widetilde{T}]}}^2\right)<1/2$, we obtain 
\bna
\left\|u\right\|_{X^{s,b}_{[\widetilde{T}-\varepsilon,\widetilde{T}]}}\leq C \left\|u(T-\varepsilon)\right\|_{H^s}+\left\|g\right\|_{X^{s,-b'}_{[\widetilde{T}-\varepsilon,\widetilde{T}]}}
\ena
We conclude that $u\in X^{s,b}_{\widetilde{T}}$, which contradicts the explosion of $\left\|u(t,.)\right\|_{H^s}$ near $\tilde{T}$. Therefore, the time of existence is the same for every $s\geq 0$.\\
Next, we use $L^2$ energy estimates to get global existence in $\Xzbt$ and indeed in $\Xsbt$. By multiplying equation (\ref{dampedeqn}) by $\overline{u}$, taking imaginary part and integrating, we get 
\bna
\left\|u(t)\right\|_{L^2}^2-\left\|u(0)\right\|_{L^2}^2=-2\intt \left\|a\varphi(\tau) u(\tau)\right\|_{L^2}^2+2\Im \intt \int_{\Tu}g \bar{u} 
\ena
\bna
\left\|u(t)\right\|_{L^2}^2& \leq & \left\|u(0)\right\|_{L^2}^2+C \intt \left\|u(\tau)\right\|_{L^2}^2+\intt \left\|u(\tau)\right\|_{L^2}\left\|g(\tau)\right\|_{L^2}\\
& \leq & \left\|u(0)\right\|_{L^2}^2+C \intt \left\|u(\tau)\right\|_{L^2}^2d\tau +C\left\|g\right\|_{L^2([-T,T],L^2)}^2.
\ena
Then, by Gronwall inequality, we have
\bnan
\label{inegGronwallL2}
\left\|u(t)\right\|_{L^2}^2\leq  C\left(\left\|u(0)\right\|_{L^2}^2+\left\|g\right\|^2_{L^2([-T,T],L^2)}\right)e^{C|t|}.
\enan
This ensures that the $L^2$ norm remains bounded and the solution $u$ is global in time.\\
For the continuity of the flow, we use a slight modification of estimate (\ref{majdiff}) for two solutions $u$ and $\tilde{u}$
\bna
\left\|u-\tilde{u}\right\|_{X^{s,b}_{T}}\leq C\left\|u(0)-\tilde{u}(0)\right\|_{H^s}+C\left\|g-\tilde{g}\right\|_{X^{s,-b'}_{T}}\\
+ CT^{1-b-b'}\left\|u-\tilde{u}\right\|_{X^{s,b}_{T}}\left(1+\left\|u\right\|_{X^{s,b}_{T}}^2+\left\|\tilde{u}\right\|_{X^{s,b}_{T}}^2\right).
\ena
Then, for $T$ small enough (depending on the size of $u_0$, $\widetilde{u_0}$, $g$ and $\tilde{g}$), we get
\bna
\left\|u-\tilde{u}\right\|_{X^{s,b}_{T}}\leq C\left\|u(0)-\tilde{u}(0)\right\|_{H^s}+C\left\|g-\tilde{g}\right\|_{X^{s,-b'}_{T}}.
\ena 
Then, we just have to piece solutions together on small intervals. Using the control of the $X^{s,b}_{T}$ norm on $L^{\infty}([0,T],H^s)$ and Lemma \ref{lemmerecouvrement}, we get that $F$ is Lipschitz on bounded sets for arbitrary $T$.
\end{proof}
\bigskip

After this point and until the end of the proof of local controllability, we will express the dependence on $s$ of the constants by writing them $C_s$ or $C(.)$ if some other dependence is considered. $b$, $b'$, $\lambda$, $a$ and $\varphi$ being fixed, we will not write the dependence of constants in these variables.\\ 
The following Propositions establish a linear behavior on bounded sets of $L^2$.
\begin{prpstn}
\label{linearbehavior}
For every $T>0$, $\eta>0$ and $s\geq 0$, there exists $C(T,\eta,s)$ such that for every $u\in \Xsbt$ solution of (\ref{dampedeqn}) with $\left\|u_0\right\|_{L^2}+\left\|g\right\|_{L^2([0,T],L^2)}<\eta$, we have the following estimate
\bnan
%\label{ineglinearbehaviorHs}
\left\|u\right\|_{\Xsbt}\leq C(T,\eta,s)\left( \left\|u_0\right\|_{H^s}+\left\|g\right\|_{L^2([0,T],H^s)}\right)\nonumber
\enan
\end{prpstn}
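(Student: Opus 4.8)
The plan is to exploit the \emph{tame} structure of estimate (\ref{tameestimateDuhamel}): the factor multiplying $\left\|u\right\|_{X^{s,b}_T}$ involves only the low-regularity norm $\left\|u\right\|_{X^{0,b}_T}$, never the full $X^{s,b}$ norm. This decoupling, inherited from the multilinear bound (\ref{multilinL2}), is exactly what converts a nonlinear problem into an estimate that is \emph{linear} in the $H^s$ data, provided the $L^2$-sized quantity $\left\|u\right\|_{X^{0,b}_T}$ has first been brought under control by a bound depending on $T$ and $\eta$ alone. Accordingly the argument splits into two parts: first a uniform a priori bound on $\left\|u\right\|_{X^{0,b}_T}$, and then a subdivision--iteration that upgrades it to level $s$ at linear cost.

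First I would control $\left\|u\right\|_{X^{0,b}_T}$. The $L^2$ energy estimate (\ref{inegGronwallL2}) gives $\left\|u(t)\right\|_{L^2}\leq R(T,\eta)$ for all $t\in[0,T]$ as soon as $\left\|u_0\right\|_{L^2}+\left\|g\right\|_{L^2([0,T],L^2)}<\eta$. Since by Theorem \ref{thmexistenceNl} the local time of existence in $\Xzbt$ depends only on the $L^2$ size of the data, there is a length $\delta=\delta(T,\eta)>0$ such that on every subinterval $I$ of $[0,T]$ of length $\delta$ the case $s=0$ of (\ref{tameestimateDuhamel}), together with the continuity of $t\mapsto\left\|u\right\|_{X^{0,b}_{I}}$ and its small-time limit from Lemma \ref{continuiteXsbt}, yields by a standard bootstrap $\left\|u\right\|_{X^{0,b}_I}\leq 2C\left\|u(t_0)\right\|_{L^2}+C\left\|g\right\|_{L^2(I,L^2)}\leq C(T,\eta)$, where $t_0$ is the left endpoint of $I$ and I have used $\left\|g\right\|_{X^{0,-b'}_I}\leq\left\|g\right\|_{L^2(I,L^2)}$ since $b'\geq0$. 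Covering $[0,T]$ by finitely many such intervals and applying Lemma \ref{lemmerecouvrement} (after the obvious rescaling to $[0,1]$) produces a bound $\left\|u\right\|_{X^{0,b}_T}\leq M=M(T,\eta)$.

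With $M$ fixed, I would then choose a possibly smaller length $\delta'=\delta'(T,\eta)>0$ so that $C(\delta')^{1-b-b'}(1+M^2)<1/2$, and apply (\ref{tameestimateDuhamel}) at level $s$ on each $I_j=[j\delta',(j+1)\delta']$. Because $\left\|u\right\|_{X^{0,b}_{I_j}}\leq M$ on every block, the nonlinear term is absorbed into the left-hand side and
$$\left\|u\right\|_{X^{s,b}_{I_j}}\leq 2C\left\|u(j\delta')\right\|_{H^s}+2C\left\|g\right\|_{X^{s,-b'}_{I_j}}.$$
Using the embedding $X^{s,b}_{I_{j-1}}\hookrightarrow C(I_{j-1},H^s)$, valid for $b>1/2$ with a constant independent of the interval and of $s$, I estimate $\left\|u(j\delta')\right\|_{H^s}\leq C\left\|u\right\|_{X^{s,b}_{I_{j-1}}}$, and I bound $\left\|g\right\|_{X^{s,-b'}_{I_j}}\leq\left\|g\right\|_{L^2(I_j,H^s)}$. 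Writing $a_j=\left\|u\right\|_{X^{s,b}_{I_j}}$, this gives the geometric recursion $a_j\leq K a_{j-1}+2C\left\|g\right\|_{L^2(I_j,H^s)}$ with $a_0\leq 2C\left\|u_0\right\|_{H^s}+2C\left\|g\right\|_{L^2(I_0,H^s)}$ and a ratio $K$ depending only on $T,\eta$. Iterating over the $\lceil T/\delta'\rceil$ blocks, summing via Lemma \ref{lemmerecouvrement}, and collecting the source terms through Cauchy--Schwarz bounds $\left\|u\right\|_{X^{s,b}_T}$ by a constant $C(T,\eta,s)$ times $\left\|u_0\right\|_{H^s}+\left\|g\right\|_{L^2([0,T],H^s)}$, as claimed.

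The main obstacle is the first step: the entire scheme hinges on the $X^{0,b}_T$ bound being uniform in $\eta$ and $T$, with \emph{no} dependence on the $H^s$ norm of the data. It is precisely this uniformity that makes the subdivision length $\delta'$ the same on all blocks, so that the recursion has a fixed ratio $K$ and terminates after a number of steps controlled by $T$ alone. Were the admissible time step forced to shrink as $\left\|u_0\right\|_{H^s}$ grew, the iteration would produce a constant blowing up with $s$ and the linear-in-data conclusion would fail; it is the tame form of (\ref{tameestimateDuhamel}), linear in $\left\|u\right\|_{X^{s,b}}$ with the nonlinear growth confined to $\left\|u\right\|_{X^{0,b}}^2$, that rules this out.
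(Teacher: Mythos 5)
Your proposal is correct and follows essentially the same route as the paper: first a bootstrap on small subintervals (using the continuity from Lemma \ref{continuiteXsbt}, the mass bound (\ref{inegGronwallL2}) and the covering Lemma \ref{lemmerecouvrement}) to obtain an $X^{0,b}_T$ bound depending only on $\eta$ and $T$, then absorption of the nonlinear term in the tame estimate (\ref{tameestimateDuhamel}) at level $s$ on small blocks and piecing together. The only minor imprecision is that your subdivision length $\delta'$ at level $s$ may also depend on $s$ through the constant $C_s$ in (\ref{tameestimateDuhamel}) (the paper takes $T\leq\varepsilon(s,\eta,T)$ there); since the final constant $C(T,\eta,s)$ is allowed to depend on $s$, this is harmless.
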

\begin{proof} Using (\ref{tameestimateDuhamel}), we obtain that $u$ satisfies 
\bna
\left\|u\right\|_{\Xsbt}\leq C\left( \left\|u_0\right\|_{H^s}+\left\|g\right\|_{L^2([0,T],H^s)}\right)+C_sT^{1-b-b'}\left\|u\right\|_{X^{s,b}_T}\left(1+\left\|u\right\|_{X^{0,b}_T}^2\right)
\ena
With $T$ such that $C_sT^{1-b-b'}<1/2$, it yields
\bna
\left\|u\right\|_{\Xsbt}\leq C\left( \left\|u_0\right\|_{H^s}+\left\|g\right\|_{L^2([0,T],H^s)}\right)+C_sT^{1-b-b'}\left\|u\right\|_{X^{s,b}_T}\left\|u\right\|_{X^{0,b}_T}^2
\ena
First we use it with $s=0$. As we have proved in Lemma \ref{continuiteXsbt} the continuity with respect to $T$ of $\left\|u\right\|_{\Xzbt}$ we are in position to apply a boot-strap argument : for $T^{1-b-b'}<\frac{1}{2C_0\left( \left\|u_0\right\|_{L^2}+\left\|g\right\|_{L^2([0,T],L^2)}\right)^2}$, we obtain  : 
\bnan
\label{ineglinearbehavior2}
\left\|u\right\|_{\Xzbt}\leq C \left(\left\|u_0\right\|_{L^2}+\left\|g\right\|_{L^2([0,T],L^2)}\right).
\enan
The mass estimate (\ref{inegGronwallL2}) gives $\left\|u(t)\right\|_{L^2}\leq C\eta e^{C|t|}$. Then, we have a constant $\varepsilon(\eta,T)$ such that (\ref{ineglinearbehavior2}) holds for every interval of length smaller than $\varepsilon(\eta,T)$.
Repeating the argument on every small interval, using that $\Xzbt$ controls $L^{\infty}(L^2)$ and matching solutions with Lemma \ref{lemmerecouvrement}, we get the same result for some large interval $[0,T]$, with a constant $C$ dependent on $\eta$ and $T$. It expresses a local linear behavior.\\
Then, returning to the case $s> 0$ and $C_sT^{1-b-b'}<1/2$, we have the estimate  
$$C_sT^{1-b-b'}\left\|u\right\|_{X^{0,b}_T}^2\leq C_sT^{1-b-b'}C(\eta,T)^2\eta^2.$$
Then, for $T\leq \varepsilon(s,\eta,T)$, this can be bounded by $1/2$ and we have
\bnan
\left\|u\right\|_{\Xsbt}\leq C\left( \left\|u_0\right\|_{H^s}+\left\|g\right\|_{L^2([0,T],H^s)}\right).
\enan\\
Again, piecing solutions together , we get the same result for large $T$, with $C$ depending on $s$, $\eta$ and $T$.\end{proof}

A notable consequence of this result is that NLS has a linear behavior in any $H^s$ on any bounded set of $L^2$.

Yet, in the last estimate, the constants strongly depend on $s$. We will use the more precise estimates of the Appendix to eliminate this dependence in $s$, up to some smoother terms. 
\begin{prpstn}
\label{proplinbehaviorHsindep}
For every $T>0$, $\eta>0$ , there exists $C(T,\eta)$ such that for every $s\geq 1$, we can find $C(T,\eta,s)$ such that for every $u\in \Xsbt$ solution of (\ref{dampedeqn}) with $\left\|u_0\right\|_{L^2}+\left\|g\right\|_{L^2([0,T],L^2)}<\eta$, we have
\bnan
\label{ineglinbehavHs}
\left\|u\right\|_{\Xsbt}&\leq& C(\eta,T)\left( \left\|u_0\right\|_{H^s}+\left\|g\right\|_{L^2([0,T],H^s)}\right)\nonumber\\
&&+C(s,\eta,T)\left\|u\right\|_{X^{s-1,b}_T}\left\|u\right\|_{X^{1,b}_T}\left\|u\right\|_{X^{0,b}_T} +C(s,\eta,T) \left\|u\right\|_{X^{s-1,b}_T}.
\enan
\end{prpstn}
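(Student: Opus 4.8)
The plan is to re-run the fixed-point estimate that produced Proposition~\ref{linearbehavior}, but to keep track of \emph{which} part of the nonlinear and damping contributions carries the full $s$ derivatives. Returning to the integral (Duhamel) formulation of (\ref{dampedeqn}) and applying the one-dimensional estimate of Lemma~\ref{gainint} exactly as in the derivation of (\ref{tameestimateDuhamel}), I would first write
\begin{eqnarray*}
\left\|u\right\|_{\Xsbt} &\leq& C\left(\left\|u_0\right\|_{H^s}+\left\|g\right\|_{L^2([0,T],H^s)}\right)\\
&&+\,CT^{1-b-b'}\left(\left\|\varphi^2a^2u\right\|_{X^{s,0}_T}+\left\||u|^2u\right\|_{X^{s,-b'}_T}\right),
\end{eqnarray*}
with $C$ independent of $s$. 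The point is then to split the last two norms into a top-order piece, estimated with an $s$-independent constant by $\left\|u\right\|_{\Xsbt}$ times lower norms, and a remainder living at regularity $s-1$.

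For the nonlinearity this is exactly the refined multilinear estimate of the Appendix: a fractional-Leibniz decomposition of $|u|^2u=u\bar u u$ isolates the interaction in which $D^s$ falls on the highest-frequency factor, giving
\begin{eqnarray*}
\left\||u|^2u\right\|_{X^{s,-b'}_T}\leq C\left\|u\right\|_{\Xsbt}\left\|u\right\|_{\Xzbt}^2+C_s\left\|u\right\|_{X^{s-1,b}_T}\left\|u\right\|_{\Xubt}\left\|u\right\|_{\Xzbt},
\end{eqnarray*}
where $C$ does not depend on $s$ while $C_s$ may. For the damping term I would use the analogous commutator bound for multiplication by the smooth coefficient, $\left\|a^2v\right\|_{H^s}\leq \left\|a^2\right\|_{L^\infty}\left\|v\right\|_{H^s}+C_s\left\|v\right\|_{H^{s-1}}$, which on $X^{s,0}_T=L^2([0,T],H^s)$ yields a top-order piece $C\left\|u\right\|_{\Xsbt}$ with $s$-free constant $\left\|\varphi^2a^2\right\|_{L^\infty}$ plus a remainder $C_s\left\|u\right\|_{X^{s-1,b}_T}$. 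Inserting both decompositions produces
\begin{eqnarray*}
\left\|u\right\|_{\Xsbt}&\leq& C\left(\left\|u_0\right\|_{H^s}+\left\|g\right\|_{L^2([0,T],H^s)}\right)+CT^{1-b-b'}\left(1+\left\|u\right\|_{\Xzbt}^2\right)\left\|u\right\|_{\Xsbt}\\
&&+\,C_sT^{1-b-b'}\left(\left\|u\right\|_{X^{s-1,b}_T}\left\|u\right\|_{\Xubt}\left\|u\right\|_{\Xzbt}+\left\|u\right\|_{X^{s-1,b}_T}\right),
\end{eqnarray*}
again with $C$ independent of $s$.

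Now comes the absorption. Because the coefficient $CT^{1-b-b'}(1+\left\|u\right\|_{\Xzbt}^2)$ of the top-order term has an $s$-independent constant, and because the $s=0$ bound (\ref{ineglinearbehavior2}) controls $\left\|u\right\|_{\Xzbt}$ on bounded sets of $L^2$, I can choose a subinterval length $\varepsilon=\varepsilon(\eta,T)$ --- \emph{depending only on $\eta$ and $T$, not on $s$} --- so that on each subinterval $I$ of length at most $\varepsilon$ one has $CT^{1-b-b'}(1+\left\|u\right\|_{X^{0,b}_I}^2)<1/2$, allowing the $\left\|u\right\|_{X^{s,b}_I}$ term to be absorbed into the left-hand side. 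This leaves on each such $I$ a bound of the form $\left\|u\right\|_{X^{s,b}_I}\leq C(\eta,T)(\left\|u(t_I)\right\|_{H^s}+\left\|g\right\|_{L^2(I,H^s)})+C_s(\cdots)_I$, the remainder being the $X^{s-1,b}_I$ terms above. I would then propagate this across the $N=N(\eta,T)$ subintervals: using $b>1/2$ (and Lemma~\ref{continuiteXsbt}) to bound the starting value $\left\|u(t_I)\right\|_{H^s}\leq C\left\|u\right\|_{X^{s,b}}$ on the preceding interval, a finite induction in which the amplification factor $C(\eta,T)$ is raised to the power $N(\eta,T)$ keeps the constant multiplying $\left\|u_0\right\|_{H^s}$ and $\left\|g\right\|_{L^2([0,T],H^s)}$ independent of $s$, while the accumulated remainders are reassembled by Lemma~\ref{lemmerecouvrement} into the global $X^{s-1,b}_T$ terms of the statement, with an $s$-dependent but harmless constant.

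The hard part is genuinely the top-order multilinear estimate with an $s$-\emph{independent} constant: the naive application of Lemma~\ref{lemmemultilin} produces a constant $C_s$ in front of $\left\|u\right\|_{\Xsbt}\left\|u\right\|_{\Xzbt}^2$, which would make the absorbing subinterval length, and hence the number $N$ of subintervals, depend on $s$ and thereby destroy the conclusion. Everything therefore hinges on the sharpened Leibniz-type bound recalled in the Appendix, whose precise role is to confine all $s$-dependence to the lower-regularity remainder $\left\|u\right\|_{X^{s-1,b}_T}$.
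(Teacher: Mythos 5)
Your proposal is correct and follows essentially the same route as the paper: Duhamel plus Lemma \ref{gainint} with an $s$-independent constant, the refined trilinear estimate (\ref{inegtrilinavecs}) together with the commutator bound of Corollary \ref{corfHs} to isolate the top-order terms, absorption of $\left\|u\right\|_{\Xsbt}$ on subintervals whose length depends only on $\eta$ (via the $s=0$ bound of Proposition \ref{linearbehavior}), and piecing together with Lemma \ref{lemmerecouvrement}. Your explicit accounting of the iteration across the $N(\eta,T)$ subintervals merely spells out what the paper compresses into one sentence.
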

\begin{proof} We first assume $T\leq 1$. Lemma \ref{gainint} gives a constant $C$ independant on $s$ such that 
\bna
\left\|u\right\|_{\Xsbt}&\leq& C \left(\left\|u_0\right\|_{H^s}+\left\|g\right\|_{L^2([0,T],H^s)}\right)\\
&&+CT^{1-b-b'}\left(\left\|a^2\varphi^2u\right\|_{L^2([0,T],H^s)}+\left\|\left|u\right|^2u\right\|_{X^{s,-b'}_T}\right)
\ena
Estimate (\ref{inegtrilinavecs}) of Proposition \ref{propinegtrilin} and Corollary \ref{corfHs} of the Appendix gives some constant $C$ and $C_s$ such that  
\bna
\left\|u\right\|_{\Xsbt}&\leq& C\left( \left\|u_0\right\|_{H^s}+\left\|g\right\|_{L^2([0,T],H^s)}\right)\\
& &+ T^{1-b-b'}\left(C\left\|u\right\|_{X^{s,b}_T}+C_s\left\|u\right\|_{X^{s-1,b}_T}\right)\\
& &+ T^{1-b-b'}\left(C\left\|u\right\|^2_{X^{0,b}_T} \left\|u\right\|_{X^{s,b}_T}+C_s \left\|u\right\|_{X^{s-1,b}_T}\left\|u\right\|_{X^{1,b}_T}\left\|u\right\|_{X^{0,b}_T}\right).
\ena
From the previous Proposition, we have
$$\left\|u\right\|_{\Xzbt}\leq C(\eta,T) \left(\left\|u_0\right\|_{L^2}+\left\|g\right\|_{L^2([0,T],L^2)}\right)\leq C(\eta,T)\eta.$$
Actually, $C(\eta,T)$ can be bounded by $C(\eta)=C(\eta,1)$ if $T\leq 1$.\\
Again, for $T$ small enough (depending only on $\eta$ and not on $s$), we have
\bna
\left\|u\right\|_{\Xsbt}&\leq& C\left( \left\|u_0\right\|_{H^s}+\left\|g\right\|_{L^2([0,T],H^s)}\right)\\
&&+C_s \left\|u\right\|_{X^{s-1,b}_T}\left\|u\right\|_{X^{1,b}_T}\left\|u\right\|_{X^{0,b}_T}+C_s \left\|u\right\|_{X^{s-1,b}_T}.
\ena
Then, piecing solutions together, we finally obtain the result on a large interval $[0,T]$.\end{proof}

\begin{rmrk}
If $g=0$, the solution $u \in X^{0,b}_T$ of (\ref{dampedeqn}) actually satisfies
\bna
\left\|u(t)\right\|_{L^2}^2-\left\|u(0)\right\|_{L^2}^2=-2\intt \left\|a\varphi(\tau)u(\tau)\right\|_{L^2}^2.
\ena
\end{rmrk}
\begin{rmrk}
If $a$ is even and $u\in \Xzbt$ solution of (\ref{dampedeqn}) with  source term $g$, then $\pm u(t,-x)$ is solution with source term $\pm g(t,-x)$. As a conclusion, by uniqueness in $\Xzbt$, we infer that if $u_0$ and $g$ are odd (resp. even), then $u$ is also odd (resp. even). This gives an existence and uniqueness theorem for Dirichlet and Neumann conditions if $a\in C^{\infty}_0(]0,\pi[)$ (by identification it will become $a\in C^{\infty}(\Tu)$ even).  
\end{rmrk}
%----------------------------------------------------------------------
\section{Controllability near 0}
%----------------------------------------------------------------------
\label{sectioncontrol0}
We know (see \cite{control-nl}, \cite{control-lin1} or \cite{Machtyngier}) that any nonempty open set $\omega$ satisfies an observability estimate in $L^2$ in arbitrary small time $T>0$. Namely, for any  $a(x)\in C^{\infty}(\Tu)$ and $\varphi(t)\in C^{\infty}_0(]0,T[)$ real valued such that $a\equiv1$ on $\omega$  and $\varphi\equiv1$ on $[T/3,2T/3]$ (we add the cutoff in time to impose that the control $g$ is zero at $0$ and $T$), there exists $C>0$ such that
\bnan
\label{inegobserv}
\left\|\Psi_0 \right\|^2_{L^2} \leq C \intT \left\| a(x)\varphi(t) e^{it\partial_x^2}\Psi_0 \right\|^2_{L^2} ~dt
\enan
for every $\Psi_0\in L^2$.\\
As a consequence, using the HUM method of J-L. Lions, this implies exact controllability in $L^2$ for the linear equation. More precisely, we can follow \cite{control-nl} to construct an isomorphism of control $S$ from $L^2$ to $L^2$. For every data $\Psi_0$ in $L^2$, there exists $ \Phi_0=S^{-1}\Psi_0$, $ \Psi_0=S\Phi_0$ such that if $\Phi $ is solution of the dual equation
 \begin{eqnarray}
\label {eqlinphi}
\left\lbrace
\begin{array}{rcl}%argument r=alignement à droite puis center et left
i\partial_t \Phi + \partial_x^2 \Phi &=& 0\\
\Phi(x,0)&=&\Phi_{0}(x)
\end{array}
\right.
\end{eqnarray}
 and $\Psi$ solution of 
\begin{eqnarray}
\label {eqlin}
\left\lbrace
\begin{array}{rcl}%argument r=alignement à droite puis center et left
i\partial_t \Psi + \partial_x^2 \Psi &=&a^2(x)\varphi^2(t) \Phi\\
\Psi(T)&=&0
\end{array}
\right.
\end{eqnarray}
we have $\Psi(0)=\Psi_0$.
\begin{lmm}
$S$ is an isomorphism of $H^s$ for every $s\geq 0$.
\end{lmm}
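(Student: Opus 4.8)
The plan is to write the control operator $S$ in closed form, recognise it as (a unimodular multiple of) the HUM Gramian, and then propagate its $L^2$-invertibility to $H^s$ by an elliptic-type regularity bootstrap. Solving the backward problem (\ref{eqlin}) by Duhamel with $\Psi(T)=0$ and inserting $\Phi(\tau)=e^{i\tau\partial_x^2}\Phi_0$ gives
$$ S\Phi_0=\Psi(0)=i\intT e^{-i\tau\partial_x^2}\,a^2(x)\varphi^2(\tau)\,e^{i\tau\partial_x^2}\Phi_0\,d\tau .$$
Since $e^{\pm i\tau\partial_x^2}$ is a Fourier multiplier of modulus one it is an isometry of every $H^s(\Tu)$, and multiplication by $a^2\varphi^2$ (smooth in $x$, with $\varphi\in C^\infty_0$ in $t$) is bounded on $H^s$ uniformly for $\tau\in[0,T]$; integrating over the bounded interval $[0,T]$ shows that $S$ maps $H^s$ into $H^s$ boundedly for every $s\ge 0$. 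Observe also that $S=i\Lambda$ with $\Lambda=\Lambda^\ast\ge 0$ self-adjoint on $L^2$, whose coercivity $\langle\Lambda\Phi_0,\Phi_0\rangle=\intT\norL{a\varphi(\tau)e^{i\tau\partial_x^2}\Phi_0}^2\,d\tau\ge \tfrac1C\norL{\Phi_0}^2$ is exactly the observability estimate (\ref{inegobserv}); this is what makes $S$ an isomorphism of $L^2=H^0$, which I take as given.

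Next I would run an induction on integers to show that $S^{-1}$ preserves $H^m$. The two ingredients are the exact norm identity $\norm{\Phi_0}_{H^m}=\norm{D\Phi_0}_{H^{m-1}}$ (immediate from the definition (\ref{Dr}), since $\widehat{D\Phi_0}(0)=\widehat{\Phi_0}(0)$ and $|k|^{2(m-1)}|\widehat{D\Phi_0}(k)|^2=|k|^{2m}|\widehat{\Phi_0}(k)|^2$ for $k\neq0$), and the commutator bound: writing $D=D^1=-i\partial_x+P_0$ with $P_0$ the projection onto the constant mode, one has $[D,a^2]=-2i\,a\,\partial_x a+[P_0,a^2]$, that is, multiplication by a smooth function plus a finite-rank operator, hence bounded on every $H^r$. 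Since $e^{\pm i\tau\partial_x^2}$ and $\varphi^2(\tau)$ commute with $D$, the operator $[D,S]=i\intT e^{-i\tau\partial_x^2}\varphi^2(\tau)[D,a^2]e^{i\tau\partial_x^2}\,d\tau$ is therefore bounded on every $H^r$: it recovers the derivative that $D$ costs. The base case $m=0$ is the $L^2$ isomorphism. Assuming $S^{-1}:H^{m-1}\to H^{m-1}$ is bounded, take $f\in H^m$ and set $\Phi_0=S^{-1}f\in H^{m-1}$; then $S(D\Phi_0)=Df-[D,S]\Phi_0\in H^{m-1}$, so the induction hypothesis gives $D\Phi_0=S^{-1}(Df-[D,S]\Phi_0)\in H^{m-1}$ with $\norm{\Phi_0}_{H^m}=\norm{D\Phi_0}_{H^{m-1}}\le C_m\norm{f}_{H^m}$. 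This closes the induction and yields $S^{-1}:H^m\to H^m$ for every integer $m$.

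Finally, for arbitrary real $s\ge 0$ I would interpolate: $S^{-1}$ is a fixed operator bounded on $H^{\lfloor s\rfloor}$ and on $H^{\lfloor s\rfloor+1}$, hence bounded on the intermediate space $H^s=(H^{\lfloor s\rfloor},H^{\lfloor s\rfloor+1})_{[\theta]}$ by complex interpolation (the same Stein--Weiss machinery used for Lemma \ref{lemmepseudoxsb}, cf.\ \cite{Bergh}); together with the boundedness of $S$ itself this proves $S$ is an isomorphism of $H^s$ for every $s\ge 0$. The heart of the argument, and the only non-formal step, is the commutator estimate for $[D,S]$: everything hinges on the fact that differentiating the $x$-multiplication $a^2$ costs no regularity because $e^{\pm i\tau\partial_x^2}$ commutes with $D$, so that $[D,S]$ is genuinely of order zero and the regularity gained by $S^{-1}$ at each step is not consumed. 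The zero Fourier mode must be tracked separately throughout (the multiplier $D^r$ is defined precisely to fix it), but this is only a finite-dimensional bookkeeping issue.
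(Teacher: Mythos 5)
Your proof is correct and follows essentially the same route as the paper: both rest on the explicit Gramian formula for $S$ and on the fact that the commutator of a derivative operator with $a^2$ is of lower order (Lemma \ref{lemmecommut}), so that inverting $S$ on $H^s$ reduces to its $L^2$-invertibility plus a lower-order remainder. The only difference is one of execution: the paper commutes $D^s$ directly and iterates over $s\in[0,1],[1,2],\dots$, which has the side benefit of producing estimate (\ref{inegSmuunif}) with a leading constant independent of $s$ (needed later in the fixed-point argument), whereas your integer induction on $[D,S]$ followed by complex interpolation proves the lemma as stated but does not track that uniformity.
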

\begin{proof} We easily see that $S$ maps $H^s$ into itself. So we just have to prove that $S\Phi_0 \in H^s$ implies $\Phi_0 \in H^s$, \emph{i.e.} $D^s\Phi_0 \in L^2$ (with notation (\ref{Dr}) of the end of the Introduction). We use the formula 
$$S \Phi_0 =i \int_0^T e^{-it\partial_x^2}\varphi^2a^2e^{it\partial_x^2} \Phi_0~dt.$$
Then, using that $S^{-1}$ is continuous from $L^2$ into itself and Lemma \ref{lemmecommut} of the Appendix, we get
\bna
\left\|D^s \Phi_0\right\|_{L^2} & \leq& C\left\|S D^s \Phi_0\right\|_{L^2}\leq  C\left\|\int_0^T e^{-it\partial_x^2}a^2\varphi^2e^{it\partial_x^2} D^s \Phi_0\right\|_{L^2}\\
&\leq & C\left\|D^s \int_0^T e^{-it\partial_x^2}a^2\varphi^2e^{it\partial_x^2}\Phi_0\right\|_{L^2}\\
&&+C\left\|\int_0^T e^{-it\partial_x^2}\left[a^2,D^s\right]\varphi^2e^{it\partial_x^2}\Phi_0\right\|_{L^2}\\
&\leq & C\left\| S \Phi_0 \right\|_{H^s}+ C_s\left\|\Phi_0\right\|_{H^{s-1}}.
\ena
This yields the desired result for $s\in [0,1]$. We obtain it for every $s\geq 0$ by iteration.\\
Moreover, if we track the dependence of each constant, especially their dependence in $s$, we get for $s\geq 1$
\bnan
\label{inegSmuunif}
\left\| S^{-1}\Psi_0 \right\|_{H^s} &\leq & C(a,\varphi,T)\left\|  \Psi_0 \right\|_{H^s}+ C(a,\varphi,s,T)\left\|\Psi_0\right\|_{H^{s-1}}.
\enan\end{proof}
\begin{thrm}
\label{thmcontrolenonlin} 
Let $\omega$ be any nonempty open subset of $\Tu$ and $T>0$. Then there exist $\varepsilon>0$ and $\eta>0$ such that for every $u_0\in L^2$ with $\left\|u_0\right\|_{L^2}<\varepsilon$, there exists $g\in C([0,T],L^2)$, with $\nor{g}{L^{\infty}([0,T],L^2)} \leq \eta$, compactly supported in $]0,T[\times \omega $ such that the unique solution $u$ in $\Xzbt$ of 
\begin{eqnarray}
\label{eqnonlinsource}
\left\lbrace
\begin{array}{rcl}%argument r=alignement à droite puis center et left
i\partial_t u + \partial_x^2 u &=&\lambda|u|^2u+g\\
u(x,0)&=&u_{0}(x)
\end{array}
\right.
\end{eqnarray}
satisfies $u(T)=0$.\\
Moreover, if $u_0\in H^s$, with $s\geq 0$, eventually with a large $H^s$ norm, we can impose $g\in C([0,T],H^s)$.
\end{thrm}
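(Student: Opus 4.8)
The plan is to reduce nonlinear null--controllability to the linear theory by a Picard fixed point, treating the nonlinearity as a source term, and then to propagate $H^s$ regularity by an induction on $s$ that keeps every smallness threshold independent of $s$. Fix $T>0$ and the cutoffs $a,\varphi$ of the HUM construction, with $\varphi\in C^\infty_0(]0,T[)$ and $a$ localized in $\omega$. For $u$ in a ball of $\Xzbt$ I would treat $\lambda|u|^2u$ as a known source and solve the \emph{linear} control problem driving $u_0$ to $0$ at time $T$ for $i\partial_t v+\partial_x^2 v=\lambda|u|^2u+g$. By Duhamel the uncontrolled state at time $T$ is $R(u)=e^{iT\partial_x^2}u_0-i\int_0^T e^{i(T-\tau)\partial_x^2}\lambda|u|^2u\,d\tau\in L^2$, affine in $u_0$ and cubic in $u$; the HUM control that cancels it has the explicit form $g(u)=a^2\varphi^2\,e^{it\partial_x^2}\Phi_0$ with $\Phi_0=S^{-1}(\cdots)$ and $\|\Phi_0\|_{L^2}\le C\|R(u)\|_{L^2}$, using the isomorphism $S$ of the preceding lemma. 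Setting $\Lambda(u)=v$, the solution of $i\partial_t v+\partial_x^2 v=\lambda|u|^2u+g(u)$ with $v(0)=u_0$, any fixed point solves (\ref{eqnonlinsource}) with $u(T)=0$. Note that $g(u)$ is automatically supported in $]0,T[\times\omega$, lies in $C([0,T],L^2)$ (a free evolution of an $L^2$ datum times smooth cutoffs), and $\|g(u)\|_{L^\infty([0,T],L^2)}\le C\|\Phi_0\|_{L^2}$.

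For the $L^2$ fixed point I would work in $\{\|u\|_{\Xzbt}\le\rho\}$. The trilinear estimate (\ref{multilinL2}) gives $\|\lambda|u|^2u\|_{X^{0,-b'}_T}\le C\rho^3$, hence $\|R(u)\|_{L^2}\le C(\|u_0\|_{L^2}+\rho^3)$ and $\|g(u)\|_{X^{0,-b'}_T}\le C(\|u_0\|_{L^2}+\rho^3)$; the tame estimate (\ref{tameestimateDuhamel}) then yields $\|\Lambda(u)\|_{\Xzbt}\le C(\|u_0\|_{L^2}+\rho^3)$, so for $\rho$ and then $\varepsilon$ small (with $\rho\approx 2C\varepsilon$) $\Lambda$ preserves the ball. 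The difference estimate (\ref{multilinL2Hs}), applied both to $|u|^2u-|\tilde u|^2\tilde u$ and to $g(u)-g(\tilde u)$ (which depends linearly on that difference), gives $\|\Lambda(u)-\Lambda(\tilde u)\|_{\Xzbt}\le C\rho^2\|u-\tilde u\|_{\Xzbt}$, a contraction for $\rho$ small. This produces $g\in C([0,T],L^2)$ with $\eta=C\varepsilon$ as stated.

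The propagation of regularity is where the work lies, and I would show the \emph{same} $\varepsilon,\eta$ work for all $s\ge 0$ by induction on $s$ in unit steps, the base $s=0$ being done. Since the fixed point $u$ solves (\ref{dampedeqn}) with $a=0$ and source $g$, I may feed it into Proposition \ref{proplinbehaviorHsindep}: crucially its leading constant $C(\eta,T)$ is independent of $s$, the $s$-dependent constant multiplying only the lower-order $X^{s-1,b}_T$ terms, which are finite by the inductive hypothesis. To close the loop I need $g=a^2\varphi^2 e^{it\partial_x^2}\Phi_0\in L^2([0,T],H^s)$, for which I would use the refined bound (\ref{inegSmuunif}) on $S^{-1}$ — again with an $s$-independent leading constant and the $s$-dependent part hitting only $H^{s-1}$ — together with the appendix trilinear estimate written as $\||u|^2u\|_{X^{s,-b'}_T}\le C\|u\|^2_{\Xzbt}\|u\|_{\Xsbt}+C_s(\text{lower order})$. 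Combining, $\|u\|_{\Xsbt}\le C\|u_0\|_{H^s}+C\rho^2\|u\|_{\Xsbt}+C_s(\text{finite lower-order terms})$ with $C$ independent of $s$; fixing $\rho$ (hence $\varepsilon$) once so that $C\rho^2<\tfrac12$ absorbs the top-order term for every $s$. To make this rigorous rather than a priori I would run the fixed point in the $\Xzbt$-closed set $\{\|u\|_{\Xzbt}\le\rho,\ \|u\|_{\Xsbt}\le M_s\}$ with $M_s=2(C\|u_0\|_{H^s}+C_s M_{s-1})$, on which the same computation shows $\Lambda$ is a self-map; since the $\Xzbt$-contraction has its unique fixed point there, that fixed point lies in $\Xsbt$, and $\Phi_0\in H^s$ then gives $g\in C([0,T],H^s)$.

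The hard part is exactly this $s$-uniformity. A naive estimate would force $\varepsilon$ to shrink with $s$ — the controllability constant and the commutator $[a^2,D^s]$ both grow with $s$ — contradicting the claim that $\varepsilon,\eta,T$ depend only on the $L^2$ size. The way around it, encoded in Proposition \ref{proplinbehaviorHsindep} and estimate (\ref{inegSmuunif}), is to isolate $s$-independent leading constants and push all $s$-dependence onto strictly smoother ($X^{s-1,b}_T$, $H^{s-1}$) remainders, which are harmless because they are already controlled at the previous induction step. Everything else is read off the explicit form $g=a^2\varphi^2 e^{it\partial_x^2}\Phi_0$: its support in $]0,T[\times\omega$, its time-continuity, and the parity (odd/even) needed for the Dirichlet and Neumann variants when $a$ is even and the data are odd or even.
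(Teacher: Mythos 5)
Your proposal is correct and follows essentially the same route as the paper: a Picard fixed point perturbing the linear HUM control $g=a^2\varphi^2\Phi$ built from the isomorphism $S$, with smallness measured only in $L^2$, and $H^s$ regularity propagated by induction on unit steps using the $s$-independent leading constants of Proposition \ref{proplinbehaviorHsindep}, estimate (\ref{inegSmuunif}) and the refined trilinear estimate (\ref{inegtrilinavecs}), all $s$-dependence being pushed onto $X^{s-1,b}_T$ and $H^{s-1}$ remainders controlled at the previous step. The only (cosmetic) difference is the unknown you iterate: you fully linearize and contract the map $u\mapsto\Lambda(u)$ on a ball of $\Xzbt$ intersected with nested $\Xsbt$-balls, whereas the paper iterates the HUM datum $\Phi_0\mapsto B\Phi_0=-S^{-1}K\Phi_0+S^{-1}u_0$ on a closed set $F$ of $L^2(\Tu)$ cut out by nested $H^i$-balls, solving the nonlinear backward problem at each evaluation of $K$; the estimates and conclusions are identical.
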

\begin{proof} We first choose $a(x) \in C^{\infty}_0(\omega)$ and $\varphi(t)\in C^{\infty}_0(]0,T[)$ different from zero, so that, observability estimate (\ref{inegobserv}) occurs. We seek $g$ under the form $\varphi^2(t)a^2(x)\Phi$ where $\Phi$ is solution of system (\ref{eqlinphi}), as in linear control theory. The purpose is then to choose the adequate $\Phi_0$  and the system is completely determined.\\
Actually, we consider the two systems
 \begin{eqnarray}
\label {eqnphi}
\left\lbrace
\begin{array}{rcl}%argument r=alignement à droite puis center et left
i\partial_t \Phi + \partial_x^2 \Phi &=& 0\\
\Phi(x,0)&=&\Phi_{0}(x)
\end{array}
\right.
\end{eqnarray}
and
\begin{eqnarray}
\label{eqnu}
\left\lbrace
\begin{array}{rcl}%argument r=alignement à droite puis center et left
i\partial_t u + \partial_x^2 u &=&\lambda|u|^2u+a^2\varphi^2\Phi\\
u(x,T)&=&0
\end{array}
\right.
\end{eqnarray}
Let us define the operator
\begin{eqnarray*}
\begin{array}{rrcl}%argument r=alignement à droite puis center et left
L:&L^2(\Tu) &\rightarrow &L^2(\Tu)\\
& \Phi_0&\mapsto &L \Phi_{0}=u_0=u(0)
\end{array}
\end{eqnarray*}
We split $u=v+\Psi$ with $\Psi$ solution of 
\begin{eqnarray}
\label{eqnPsi}
\left\lbrace
\begin{array}{rcl}%argument r=alignement à droite puis center et left
i\partial_t \Psi + \partial_x^2 \Psi &=&a^2(x)\varphi^2(t)\Phi\\
\Psi(T)&=&0
\end{array}
\right.
\end{eqnarray}
This corresponds to the linear control, and indeed $\Psi(0)=S \Phi_0$. As for function $v$, it is solution of 
\begin{eqnarray}
\label{eqnv}
\left\lbrace
\begin{array}{rcl}%argument r=alignement à droite puis center et left
i\partial_t v + \partial_x^2 v  &=&\lambda|u|^2u\\
v(T)&=&0
\end{array}
\right.
\end{eqnarray}
Then, $u$, $v$, $\Psi$ belong to $\Xzbt$ and $u(0)=v(0)+\Psi(0)$, which we can write
$$L\Phi_0= K\Phi_0+S\Phi_0$$
where $K\Phi_0=v(0)$. \\
$L \Phi_0 =u_0$ is equivalent to $\Phi_0=-S^{-1}K\Phi_0+S^{-1}u_0$. Defining the operator $B:L^2 \rightarrow L^2$ by 
$$B\Phi_0=-S^{-1}K\Phi_0+S^{-1}u_0, $$
the problem $L\Phi_0 =u_0$ is now to find a fixed point of $B$. We will prove that if $\nor{u_0}{L^2}$ is small enough, $B$ is a contraction (for the $L^2$ norm) and reproduces the closed set $$F=B_{L^2}(0,\eta) \bigcap \left(\bigcap_{i=1}^{\left\lfloor s\right\rfloor-1} B_{H^{i}}(0,R_i)\right) \bigcap  B_{H^s}(0,R_s)$$ for $\eta$ small enough and for some large $R_i$.\\
We may assume $T<1$, and fix it (actually the norm of $S^{-1}$ as an operator acting on $L^2$ or $H^s$ depends on $T$ and even explode when $T$ tends to $0$, see \cite{Phung}, \cite{Miller} and \cite{Tenenbaum}). In the rest of the proof, as we want a bound for $\eta$ independent on $s$, we will denote $C$ any constant depending only on $a$, $\varphi$, $b$, $b'$ and $T$ that are fixed. We will write $C_s$ if a dependence on $s$ is allowed.\\
Since $S$ is an isomorphism of $H^{s}$, we have
\bnan
\label{inegBHs}
\left\|B\Phi_0 \right\|_{H^s}\leq C_s\left(\left\|K\Phi_0 \right\|_{H^s}+\left\|u_0 \right\|_{H^s}\right).
\enan
So, we are led to estimate $\left\|K\Phi_0 \right\|_{H^s}=\left\|v(0) \right\|_{H^s}$.\\
Indeed, if we apply to equation (\ref{eqnv}) the same $\Xsbt$ estimates (Lemma \ref{gainint} and estimate (\ref{multilinL2}) of Lemma \ref{lemmemultilin}) we used in the existence Theorem \ref{thmexistenceNl}, we get
\bnan
\label{inegv0Hs}
\left\|v(0) \right\|_{H^s}&\leq &C \left\|v\right\|_{\Xsbt} \nonumber\\
&\leq &C T^{1-b-b'}\left\||u|^2u\right\|_{X^{s,-b'}_T} \nonumber\\
&\leq& C \left\||u|^2u\right\|_{X^{s,-b'}_T} \nonumber\\
&\leq & C_s \left\|u\right\|^2_{X^{0,b}_T}\left\|u\right\|_{X^{s,b}_T}.
\enan
Let us first consider the $L^2$ norm and use the local linear behavior of $u$ (see Proposition \ref{linearbehavior}). We obtain that for $\left\|\varphi^2 a^2 \Phi\right\|_{L^2([0,T],L^2)} \leq C\left\|\Phi_0\right\|_{L^2}<C\eta<1$,  we have 
\bna
\left\|u\right\|_{X^{0,b}_T}\leq C \left\|\Phi_0\right\|_{L^2}.
\ena
Finally, applying (\ref{inegBHs}) and (\ref{inegv0Hs}) with $s=0$, this yields 
\bna
\left\|B\Phi_0 \right\|_{L^2}\leq C\left(\left\|\Phi_0\right\|^3_{L^2}+\left\|u_0 \right\|_{L^2}\right).
\ena
Choosing $\eta$ small enough and $\left\|u_0 \right\|_{L^2}\leq \eta/2C$, we obtain $ \left\|B\Phi_0 \right\|_{L^2}\leq \eta$ and $B$ reproduces the ball $B_{\eta}$ of $L^2$.\\
For the $H^s$ norm, we distinguish two cases : $s\leq 1 $ and $s>1$.

\bigskip

For $s\leq 1$, we return to (\ref{inegv0Hs}) with the new estimate in $\Xzbt$.
\bna
\left\|v(0) \right\|_{H^s}&\leq & C_s  \eta ^2 \left\|u\right\|_{X^{s,b}_T}
\ena
$$\left\|B\Phi_0 \right\|_{H^s}\leq C_s\left(\eta ^2 \left\|u\right\|_{X^{s,b}_T}+\left\|u_0 \right\|_{H^s}\right)$$
Then, using Proposition \ref{linearbehavior} we have a linear behavior in $H^s$ norm when we have only a bounded $L^2$ norm. More precisely, for $\left\|\varphi^2a^2 \Phi\right\|_{L^2([0,T],L^2)} \leq C\left\|\Phi_0\right\|_{L^2}<C\eta<1$ we get
\bnan
\label{estimdependant}
\left\|u\right\|_{\Xsb} \leq C_s \left\|\Phi_0\right\|_{H^s}
\enan
and 
$$\left\|B\Phi_0 \right\|_{H^s}\leq C_s\left(\eta ^2 \left\|\Phi_0\right\|_{H^s}+\left\|u_0 \right\|_{H^s}\right)$$

Then, for $C_s\eta^2<1/2$, $B$ reproduces any ball in $H^s$ of radius greater than $2C_s\left\|u_0 \right\|_{H^s}$.

As a conclusion, we have proved that if $\eta <\tilde{C}_s$, $\left\|u_0 \right\|_{L^2}\leq C(\eta)$ and $R\geq C(\left\|u_0\right\|_{H^s})$, then $B$ reproduces $F$. Moreover, we can check that all the estimates are uniform for $s\leq 1$ and so the bound on $\eta$ is uniform.

\bigskip

If $s>1$, we choose the $R_i$ by induction. $R_1$ is chosen as for the case $s\leq 1$ so that $B$ reproduces $B_{H^1}(0,R_1)$. The crucial point will be to make some asumptions of smallness on $\eta$ that will be independent on $i$ and $s$. This will be possible using some estimates uniform in $s$, up to some smoother terms (that could be very large). First, we use estimate (\ref{inegSmuunif}) about $S^{-1}$.
$$\left\|B\Phi_0 \right\|_{H^i}\leq C \left\|K\Phi_0\right\|_{H^i} +C_i \left\|K\Phi_0\right\|_{H^{i-1}}+C_i \left\|u_0\right\|_{H^i}$$
The same analysis we made for the case $s\leq 1$ yields
$$\left\|K\Phi_0\right\|_{H^{i-1}}\leq C_{i-1} \eta ^2 \left\|\Phi_0\right\|_{H^{i-1}}\leq C_{i-1} \eta ^2 R_{i-1}.$$
Then, using the more precise multilinear estimate (\ref{inegtrilinavecs}) of Proposition \ref{propinegtrilin} of the Appendix, we get
\bna
\left\|v(0) \right\|_{H^i}& \leq & C \left\||u|^2u\right\|_{X^{i,-b'}_T} \\
&\leq & C \left\|u\right\|^2_{X^{0,b}_T} \left\|u\right\|_{X^{i,b}_T}+C_i \left\|u\right\|_{X^{i-1,b}_T}\left\|u\right\|_{X^{1,b}_T}\left\|u\right\|_{X^{0,b}_T}.
\ena
For the term with maximal derivative, we use the refinement (\ref{ineglinbehavHs}) of Proposition \ref{proplinbehaviorHsindep} and Corollary \ref{corfHs} of the Appendix
\bna
\left\|u\right\|_{X^{i,b}_T}&\leq& C\left\|\varphi^2a^2\Phi\right\|_{L^2([0,T],H^i)}+C_i \left\|u\right\|_{X^{i-1,b}_T}\left\|u\right\|_{X^{1,b}_T}\left\|u\right\|_{X^{0,b}_T}+C_i \left\|u\right\|_{X^{i-1,b}_T}\\
&\leq& C\left\|\Phi_0\right\|_{H^i}+C_i\left\|\Phi_0\right\|_{H^{i-1}}+C_i \left\|u\right\|_{X^{i-1,b}_T}\left\|u\right\|_{X^{1,b}_T}\left\|u\right\|_{X^{0,b}_T}\\
& &+C_i \left\|u\right\|_{X^{i-1,b}_T}.
\ena
For the terms with lower derivative, we only need estimate (\ref{estimdependant}), which  yields
\bna
\left\|v(0) \right\|_{H^i}&\leq & C \eta^2 \left\|u\right\|_{X^{i,b}_T}+C_i R_{i-1}R_1\eta\\
&\leq & C \eta^2 \left\|\Phi_0\right\|_{H^i}+C \eta^2 \left(C_i R_{i-1}+C_i R_{i-1}R_1\eta \right)+C_i R_{i-1}R_1 \eta.
\ena
Finally, we obtain
$$\left\|B\Phi_0 \right\|_{H^i}\leq C \eta^2 \left\|\Phi_0\right\|_{H^i}+C(i,\eta,R_1,R_{i-1} ,\left\|u_0\right\|_{H^i}).$$
If we choose $C\eta^2<1/2$ independant on $s$ and $R_i=2C(i,\eta,R_1,R_{i-1} ,\left\|u_0\right\|_{H^i})$, we obtain that $B$ reproduces $B_{H^{i}}(0,R_i)$. The same arguments work for $B_{H^{s}}(0,R_s)$ if $s\geq 1$.

Let us prove that $B$ is contracting for $L^2$ norm. For that, we examine the systems
\begin{eqnarray}
\label{eqnNldiff}
\left\lbrace
\begin{array}{rcl}%argument r=alignement à droite puis center et left
i\partial_t (u-\tilde{u}) + \partial_x^2 (u-\tilde{u})&=& \lambda(|u|^2u-|\tilde{u}|^2\tilde{u})+a^2\varphi^2(\Phi-\widetilde{\Phi})\\
(u-\tilde{u})(T)&=&0
\end{array}
\right.
\end{eqnarray}
\begin{eqnarray*}
\left\lbrace
\begin{array}{rcl}%argument r=alignement à droite puis center et left
i\partial_t (v-\tilde{v}) + \partial_x^2 (v-\tilde{v})  &=&\lambda(|u|^2u-|\tilde{u}|^2\tilde{u}) \\
(v-\tilde{v})(T)&=&0
\end{array}
\right.
\end{eqnarray*}
We obtain
\bnan
\label{diffB}
\left\|B\Phi_0 -B\widetilde{\Phi}_0\right\|_{L^2} & \leq & C \left\|(v-\tilde{v})(0)\right\|_{L^2} \nonumber \\
& \leq & C T^{1-b-b'}\left\||u|^2u-|\tilde{u}|^2\tilde{u}\right\|_{X^{0,-b'}_T} \nonumber\\
&\leq & C \left(\left\|u\right\|^2_{\Xzbt}+\left\|\tilde{u}\right\|^2_{\Xzbt}\right)\left\|u-\tilde{u}\right\|_{\Xzbt} \nonumber \\
&\leq& C\eta^2\left\|u-\tilde{u}\right\|_{\Xzbt}.
\enan
Considering equation (\ref{eqnNldiff}), we deduce
\bna 
\left\|u-\tilde{u}\right\|_{\Xzbt} & \leq & C T^{1-b-b'}\left\||u|^2u-|\tilde{u}|^2\tilde{u}\right\|_{X^{0,-b'}_T} + C\left\|\varphi^2a^2(\Phi-\widetilde{\Phi})\right\|_{L^2([0,T],L^2)}\\
&\leq & \left(\left\|u\right\|^2_{\Xzbt}+\left\|\tilde{u}\right\|^2_{\Xzbt}\right)\left\|u-\tilde{u}\right\|_{\Xzbt}+ C\left\| \Phi_0-\widetilde{\Phi_0}\right\|_{L^2}\\
&\leq & C \eta^2\left\|u-\tilde{u}\right\|_{\Xzbt}+ C\left\| \Phi_0-\widetilde{\Phi_0}\right\|_{L^2}.
\ena
If $\eta$ is taken small enough (independent on $s$) it yields
\bnan
\label{estimdiff}
\left\|u-\tilde{u}\right\|_{\Xzbt}\leq C \left\| \Phi_0-\widetilde{\Phi_0}\right\|_{L^2}.
\enan
Combining (\ref{estimdiff}) with (\ref{diffB}) we finally get
\bna
\left\|B\Phi_0 -B\widetilde{\Phi}\right\|_{L^2} 
&\leq& C\eta^2 \left\| \Phi_0-\widetilde{\Phi_0}\right\|_{L^2}.
\ena
Therefore, for $\eta$ small enough (independent on $s$), $B$ is a contraction of a closed set $F$ of $L^2$ and has a fixed point that by construction belongs to $H^s$. This completes the proof of Theorem \ref{thmcontrolenonlin}. \end{proof}
\begin{rmrk}
To get control for Dirichlet or Neumann condition, we have to check that if $u_0$ is odd (resp even), then the control we built is so. Suppose that $a\in C^{\infty}(\Tu)$ is even on $\Tu$ and $u_0$ is odd (resp even). Then $\check{u}(x)=-u(-x)$ is solution of (\ref{eqnu}) with $\Phi$ replaced by $\check{\Phi}(x)=-\Phi(-x)$. We have $u_0=\check{u_0}=L\check{\Phi_0}$ and indeed, $B\check{\Phi_0}=\check{\Phi_0}$. Since $\check{\Phi_0}$ has the same norm as $\Phi_0$ and by uniqueness of the fixed point in the closed set $F$, we obtain $\check{\Phi_0}=\Phi_0$ and $\Phi_0$ is odd. Therefore, the control $a^2\varphi^2\Phi$ and $u$ are odd. The same argument works similarly for $u_0$ even.
\end{rmrk}
%--------------------------------------------------------------------------
\section{Propagation of compactness}
%--------------------------------------------------------------------------
In this section, we adapt some theorems of Dehman-G\'erard-Lebeau \cite{control-nl} in the case of $\Xsb$ spaces. 
\begin{thrm}
\label{thmpropagation3}
Let $u_n$ be a sequence of solutions of 
$$i\partial_t u_n + \partial_x^2 u_n =f_n$$  
such that for some $0\leq b\leq 1$, we have
$$ \left\|u_n\right\|_{X^{0,b}_T} \leq C,~~\left\|u_n\right\|_{X^{-1+b,-b}_T} \rightarrow 0~~ and~~\left\|f_n\right\|_{X^{-1+b,-b}_T}\rightarrow  0 $$ 
Moreover, we assume that there is a nonempty open set $\omega$ such that $u_n \rightarrow 0$ strongly in\\ $L^2([0,T],L^2(\omega))$.\\
Then $u_n \rightarrow 0$ strongly in $L^2_{loc}([0,T],L^2(\Tu))$.
\end{thrm}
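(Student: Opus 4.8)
The plan is to replace the microlocal-defect-measure argument of \cite{control-nl} by an elementary positive-commutator computation adapted to the one-dimensional geometry, testing the equation against the order $-1$ operator $A=\tilde a(x)D^{-1}$, where $\tilde a\in C^\infty(\Tu)$ is real and $D^{-1}$ is defined by (\ref{Dr}). The point of this choice is that $D^{-1}$ commutes with $\partial_x^2$, so that $[A,\partial_x^2]=[\tilde a,\partial_x^2]D^{-1}=(-\tilde a''-2\tilde a'\partial_x)D^{-1}$; since the definition (\ref{Dr}) gives $\partial_x D^{-1}=i(I-P_0)$, with $P_0$ the projection on the zero Fourier mode, this commutator is in fact of order $0$:
$$-i[A,\partial_x^2]=-2\tilde a'(x)(I-P_0)+i\,\tilde a''(x)D^{-1}.$$
Thus $A$ plays the role of an order $0$ symbol whose Poisson bracket with $\xi^2$ produces the transport term $\tilde a'(x)$, but here the pairing against $u_n$ stays bounded on $L^2$.

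First I would write, for a solution of $i\partial_t u_n+\partial_x^2u_n=f_n$ and any operator $A$, the energy identity
$$\frac{d}{dt}(Au_n,u_n)_{L^2_x}=-i([A,\partial_x^2]u_n,u_n)+i(Af_n,u_n)-i(Au_n,f_n),$$
which uses only the self-adjointness of $\partial_x^2$. Integrating against a real cutoff $\psi\in C_0^\infty(]0,T[)$ makes the boundary terms vanish, and the commutator computation above turns the leading contribution into $-2\intT\psi(t)\int_\Tu\tilde a'(x)|u_n|^2\,dx\,dt$, up to the $P_0$ contribution. The proposal is then to show that every remaining term tends to $0$.

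This vanishing is the step where the two negative-norm hypotheses enter, since each remaining term pairs $u_n$ or $f_n$ against the image of $u_n$ or $f_n$ under an operator gaining one spatial derivative ($A$, $\tilde a''D^{-1}$, or $P_0$). By Lemma \ref{lemmepseudoxsb}, multiplication by $\tilde a$ maps $X^{1,b}$ into $X^{1-b,b}$, hence $\|Au_n\|_{X^{1-b,b}_T}\le C\|u_n\|_{X^{0,b}_T}\le C$, and the duality between $X^{1-b,b}_T$ and $X^{-1+b,-b}_T$ gives $|(Au_n,v)_{L^2_{t,x}}|\le C\|v\|_{X^{-1+b,-b}_T}$; taking $v=\psi'u_n$ (bounded in $X^{-1+b,-b}_T$ by Lemma \ref{lemmetps}) controls the boundary term from the left-hand side, and $v=u_n$ controls the smoothing term $(\tilde a''D^{-1}u_n,u_n)$, both tending to $0$. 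The source terms are handled likewise after noting $\|\tilde aD^{-1}f_n\|_{X^{0,-b}_T}\le C\|f_n\|_{X^{-1+b,-b}_T}$, and the finite-rank projection $P_0$ contribution vanishes the same way. Passing to the limit yields
$$\intT\psi(t)\int_\Tu\tilde a'(x)\,|u_n(t,x)|^2\,dx\,dt\longrightarrow0$$
for every real $\tilde a\in C^\infty(\Tu)$ and every $\psi\in C_0^\infty(]0,T[)$.

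The final step converts this into $L^2_{loc}$ convergence. Since $u_n$ is bounded in $X^{0,b}_T\hookrightarrow L^2([0,T]\times\Tu)$, the positive measures $|u_n|^2\,dt\,dx$ admit a weak-$*$ limit $\mu$ along a subsequence. As $\tilde a'$ ranges over all mean-zero smooth functions on $\Tu$, the displayed limit says $\int g(x)\psi(t)\,d\mu=0$ for every mean-zero $g$ and every $\psi\in C_0^\infty(]0,T[)$, so the $x$-marginal of $\psi\mu$ is a constant multiple of Lebesgue measure; by positivity this forces $\mu=dx\otimes d\rho(t)$ on $]0,T[\times\Tu$. The hypothesis $u_n\to0$ in $L^2([0,T]\times\omega)$ makes $\mu$ vanish over $]0,T[\times\omega$, whence $|\omega|\,\rho=0$ and $\mu=0$. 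Positivity and upper semicontinuity of mass on compact sets then give $\int_K|u_n|^2\to0$ for every compact $K\subset\,]0,T[\times\Tu$, which is the claim; since the limit is $0$ along every subsequence, the whole sequence converges. I expect the main obstacle to be the bookkeeping of the error terms: one must verify that each pairing falls into a dual pair of the form $X^{1-b,b}_T$ against $X^{-1+b,-b}_T$ (or $X^{0,b}_T$ against $X^{0,-b}_T$), tracking the $|b|$-loss of Lemma \ref{lemmepseudoxsb} and the precise action (\ref{Dr}) of $D^{-1}$ at the zero mode so that the order $0$ identity for $-i[A,\partial_x^2]$ and all smoothing estimates hold at once; the disintegration of $\mu$ in the last step is routine but should be carried out using the positivity of $\mu$ explicitly.
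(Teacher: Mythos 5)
Your argument is correct and is essentially the paper's own proof: the same commutator $[A,\partial_x^2]$ with $A=\varphi(x)D^{-1}$ (time cutoff included), the same duality estimates via Lemma \ref{lemmepseudoxsb} pairing $X^{1-b,b}_T$ against $X^{-1+b,-b}_T$, and the same identification $-i\partial_x D^{-1}=I-P_0$ reducing everything to $\int\psi(t)\,\tilde a'(x)\,|u_n|^2\to 0$ for arbitrary mean-zero $\tilde a'$. The only differences are cosmetic: the paper justifies the formal integrations by parts by first regularizing with $A_\varepsilon=Ae^{\varepsilon\partial_x^2}$ and then taking the supremum in $\varepsilon$, and it concludes not by extracting a weak-$*$ defect measure but directly, writing $\tilde a'=\chi(x)-\chi(x-x_0)$ with $\chi\in C_0^\infty(\omega)$ and covering $\Tu$ by translates of bumps supported in $\omega$.
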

\begin{proof} 
Let $\varphi \in C^{\infty}(\Tu)$ and $\Psi \in  C^{\infty}_0(]0,T[)$ taking real values, that will be chosen later. Set $Bu=\varphi(x)D^{-1}$ and $A=\Psi(t)B$ where $D^{-1}$ is the operator defined at the end of the Introduction in (\ref{Dr}). We have $A^*=\Psi(t)D^{-1}\varphi(x)$.\\
Denote $L$ the Schr\"odinger operator $L  = i\partial_t  + \partial_x^2 $.  
For $\varepsilon >0$, we denote $A_{\varepsilon}=Ae^{\varepsilon\partial_x^2}=\Psi(t)B_{\varepsilon}$ for the regularization. We write by a classical way
\begin{eqnarray*}
\alpha_{n,\varepsilon}&=& (L u_n,A^*_{\varepsilon}u_n)_{L^2(]0,T[\times \Tu)}-(A_{\varepsilon}u_n,Lu_n,)_{L^2(]0,T[\times \Tu)}\\
&=& ([A_{\varepsilon},\partial_x^2]u_n,u_n)-i(\Psi'(t)B_{\varepsilon}u_n,u_n).
\end{eqnarray*}
But we have also
\begin{eqnarray*}
\alpha_{n,\varepsilon}&=& (f_n,A^*_{\varepsilon}u_n)_{L^2(]0,T[\times \Tu)}-(A_{\varepsilon}u_n,f_n)_{L^2(]0,T[\times \Tu)}
\end{eqnarray*}
Using Lemma \ref{lemmepseudoxsb}, we obtain
\begin{eqnarray}
\label{estimfn}
\left|(f_n,A^*_{\varepsilon}u_n)_{L^2(]0,T[\times \Tu)}\right| &\leq& \|f_n\|_{X^{-1+b,-b}_T} \|A^*_{\varepsilon}u_n\|_{X^{1-b,b}_T} \nonumber\\
&\leq& \|f_n\|_{X^{-1+b,-b}_T} \|u_n\|_{X^{0,b}_T}
\end{eqnarray}
Then, $\sup_{\varepsilon}\left|(f_n,A^*_{\varepsilon}u_n)_{L^2(]0,T[\times M)}\right|\rightarrow 0$ when $n \rightarrow \infty$. The same estimate for the other terms gives $\sup_{\varepsilon} \alpha_{n,\varepsilon} \rightarrow 0$ and likewise for the term $ (\Psi'(t)B_{\varepsilon}u_n,u_n)$.\\
Finally, taking the supremum on $\varepsilon$ tending to $0$, we get
  $$([A,\partial_x^2]u_n,u_n)_{L^2(]0,T[\times \Tu)} \rightarrow 0 \textnormal{ when } n \rightarrow \infty $$
Then, as $D^{-1}$ commutes with $\partial_x^2$, we have
\bna
[A,\partial_x^2]=-2\Psi(t)(\partial_x \varphi) \partial_x D^{-1}- \Psi(t)(\partial^2_x \varphi)D^{-1}.
\ena
Making the same estimates as in (\ref{estimfn}), we get  $$(\Psi(t)(\partial^2_x \varphi)D^{-1}u_n,u_n)_{L^2(]0,T[\times \Tu)} \rightarrow 0.$$ 
Moreover, $-i \partial_x D^{-1}$ is actually the orthogonal projection on the subspace of functions with $\widehat{u}(0)=0$. Using weak convergence, we easily obtain that $\widehat{u_n}(0)(t)$ tends to $0$ in $L^2([0,T])$ and indeed, $$(\Psi(t)(\partial_x \varphi)\widehat{u_n}(0)(t),u_n)_{L^2(]0,T[\times \Tu)} \rightarrow 0.$$
Our final result is that for any $\varphi\in C^{\infty}(\Tu)$ and $\Psi \in  C^{\infty}_0(]0,T[)$
$$(\Psi(t)(\partial_x \varphi)u_n,u_n)_{L^2(]0,T[\times \Tu)} \rightarrow 0.$$
Now, we remark that the functions that can be written $\partial_x \varphi$ are actually all the functions $\phi$ that fulfill $\int_{\Tu} \phi =0$. For example, for any $\chi \in C^{\infty}_0(\omega)$ and any $x_0\in \Tu$, $\phi(x)= \chi(x)-\chi(x-x_0)$ can be written $\phi=\partial_x \varphi$.\\
The strong convergence in $L^2([0,T],L^2(\omega))$ implies  
 $$(\Psi(t)\chi u_n,u_n)_{L^2(]0,T[\times \Tu)}\rightarrow 0.$$
Then for any $x_0\in \Tu$
$$(\Psi(t)\chi(.-x_0) u_n,u_n)_{L^2(]0,T[\times \Tu)}\rightarrow 0.$$
We close the proof by constructing a partition of unity of $\Tu$ with some functions $\chi_i(.-x^i_0) $ with $\chi_i\in C^{\infty}_0(\omega)$ and $x^i_0\in \Tu$. \end{proof}
%------------------------------------------------------------------------------
\section{Propagation of regularity}
%------------------------------------------------------------------------------
We write Proposition 13 of \cite{control-nl} with some $\Xsb$ assumptions on the second term of the equation.
\begin{thrm}
\label{thmpropagreg}
Let $T>0$, $0\leq b<1$ and $u\in X^{r,b}_T $, $r \in \R$ solution of
$$i \partial_t u +\partial_x^2 u=f \in X^{r,-b}_T $$
Moreover, we assume that there exists a nonempty open set $\omega$ such that $u\in L^2_{loc}(]0,T[,H^{r+\rho}(\omega))$ for some $\rho \leq \frac{1-b}{2}$.\\
% \mbox{$u\in L^2_{loc}(]0,T[,H^{r+\rho}(\omega))$} for some $\rho \leq \frac{1-b}{2}$.\\
Then $u\in L^2_{loc}(]0,T[,H^{r+\rho}(\Tu))$.
\end{thrm}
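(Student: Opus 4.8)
The plan is to mimic the commutator argument of Theorem \ref{thmpropagation3}, but with an operator that gains $2\rho$ derivatives in space rather than being of order $0$. Fix a compact subinterval of $]0,T[$, take $\Psi\in C^\infty_0(]0,T[)$ real and equal to $1$ there, and a real $\varphi\in C^\infty(\Tu)$ to be chosen. Set $B=\varphi(x)D^{2r+2\rho-1}$, $A=\Psi(t)B$, and regularize by $A_\varepsilon=Ae^{\varepsilon\partial_x^2}=\Psi(t)B_\varepsilon$ so that all the integrations by parts below are licit; as in Theorem \ref{thmpropagation3} one then lets $\varepsilon\to0$ at the end. Writing $\alpha_\varepsilon=(Lu,A_\varepsilon^*u)_{L^2(]0,T[\times\Tu)}-(A_\varepsilon u,Lu)_{L^2(]0,T[\times\Tu)}$ with $L=i\partial_t+\partial_x^2$ gives the two expressions
\[\alpha_\varepsilon=([A_\varepsilon,\partial_x^2]u,u)-i(\Psi'(t)B_\varepsilon u,u)=(f,A_\varepsilon^*u)-(A_\varepsilon u,f).\]

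The first thing I would check is that the right-hand, \emph{source}, side is bounded uniformly in $\varepsilon$ exactly under the hypothesis $\rho\leq\frac{1-b}{2}$. Indeed $A^*u=\Psi D^{2r+2\rho-1}(\varphi u)$, and Lemma \ref{lemmepseudoxsb} costs one factor $b$ of regularity when multiplying by $\varphi$, so $\varphi u\in X^{r-b,b}_T$ and $A^*u\in X^{1-r-2\rho-b,b}_T$. Since $1-r-2\rho-b\geq-r$ is equivalent to $\rho\leq\frac{1-b}{2}$, we get $A^*u\in X^{-r,b}_T$, and hence, by the duality between $X^{r,-b}_T$ and $X^{-r,b}_T$, $|(f,A_\varepsilon^*u)|\leq C\|f\|_{X^{r,-b}_T}\|u\|_{X^{r,b}_T}$; the term $(A_\varepsilon u,f)$ and the term $(\Psi'B_\varepsilon u,u)$ are estimated the same way. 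This is precisely where the restriction on $\rho$ is used, and it is the only place.

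Consequently $([A,\partial_x^2]u,u)$ is finite. Now $[A,\partial_x^2]=-\Psi(2(\partial_x\varphi)\partial_x+(\partial_x^2\varphi))D^{2r+2\rho-1}$; the $(\partial_x^2\varphi)$ piece has one derivative less and is finite by the same duality estimate, so the surviving information is that $(\Psi(\partial_x\varphi)\,\partial_xD^{2r+2\rho-1}u,u)$ is finite. On the torus $\partial_xD^{2r+2\rho-1}$ is the Fourier multiplier $i|n|^{2(r+\rho)}$, so this says that $\int\!\!\int\Psi(\partial_x\varphi)|D^{r+\rho}u|^2\,dx\,dt$ is finite, up to a commutator $[\,\partial_x\varphi,D^{r+\rho}]$ which is of order $r+\rho-1$ and therefore controlled, thanks again to $\rho\leq1/2$, by the a priori regularity $u\in L^2([0,T],H^r)$ that already follows from $u\in X^{r,b}_T$.

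Finally I would run the localization step of Theorem \ref{thmpropagation3}: every mean-zero function is of the form $\partial_x\varphi$, and in particular $\partial_x\varphi=\chi-\chi(\cdot-x_0)$ with $\chi\in C^\infty_0(\omega)$, $\chi\geq0$, and $x_0\in\Tu$ arbitrary. The piece carried by $\chi$ is finite because $\chi u\in H^{r+\rho}(\Tu)$ by hypothesis, so the piece carried by $\chi(\cdot-x_0)$ is finite too, i.e. $\int\!\!\int\Psi\,\chi(\cdot-x_0)|D^{r+\rho}u|^2$ is finite; symmetrizing by one more lower-order commutator this means $u\in L^2_{loc}(H^{r+\rho})$ near $x_0$. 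Covering $\Tu$ by a partition of unity built from translates $\chi_i(\cdot-x^i_0)$ then gives $u\in L^2_{loc}(]0,T[,H^{r+\rho}(\Tu))$. The main obstacle is the bookkeeping of the many lower-order commutator terms and the passage to the limit $\varepsilon\to0$; the single genuinely structural point is the order $2r+2\rho-1$ of $B$, which is forced so that the source estimate closes exactly at the threshold $\rho=\frac{1-b}{2}$, independently of $r$.
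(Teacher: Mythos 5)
Your proposal is correct and follows essentially the same route as the paper: the commutator $[A,\partial_x^2]$ with $A=\Psi(t)B$ of spatial order $2(r+\rho)-1$, the duality estimate via Lemma \ref{lemmepseudoxsb} which is exactly where $\rho\le\frac{1-b}{2}$ enters, the decomposition $\partial_x\varphi=\chi-\chi(\cdot-x_0)$ with $\chi=f^2$ supported in $\omega$, and the concluding partition of unity. The only (immaterial) differences are that you take $B=\varphi\, D^{2r+2\rho-1}$ where the paper takes $B=D^{2(r+\rho)-1}\varphi$, and that you regularize the operator where the paper regularizes the solution ($u_n=e^{\frac{1}{n}\partial_x^2}u$), the resulting commutators being handled by Lemma \ref{lemmecommut2} in either case.
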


\begin{proof} We first regularize : $u_n=e^{\frac{1}{n}\partial_x^2}u=\Xi_n u$ and $f_n=\Xi_n u$ with $\left\|u_n\right\|_{X^{r,b}_T}\leq C $ and $\left\|f_n\right\|_{X^{r,-b}_T}\leq C $. Set $s=r+\rho$.\\
We will make a proof near the one we did for propagation of compactness.\\
Let $\varphi \in C^{\infty}(\Tu)$ and $\Psi \in  C^{\infty}_0(]0,T[)$ taking real values. Set $Bu=D^{2s-1}\varphi(x)$ and $A=\Psi(t)B$ (with notation (\ref{Dr}) of the Introduction).
If $L=i\partial_t +\partial_x^2$, we write 
\begin{eqnarray*}
& &(L u_n,A^*u_n)_{L^2(]0,T[\times \Tu)}-(Au_n,Lu_n,)_{L^2(]0,T[\times \Tu)}\\
&=& ([A,\partial_x^2]u_n,u_n)_{L^2(]0,T[\times \Tu)}-i(\Psi'(t)Bu_n,u_n)
\end{eqnarray*}
\begin{eqnarray*}
|( Au_n,f_n)_{L^2(]0,T[\times \Tu)}| &\leq & \| Au_n\|_{X^{-r,b}_T}\|f_n\|_{X^{r,-b}_T}\\
&\leq & \| u_n\|_{X^{r+2\rho-1+b,b}_T}\|f_n\|_{X^{r,-b}_T}
\end{eqnarray*}
As we have chosen
$ \rho\leq \frac{1-b}{2}$, we have $r+2\rho-1+b \leq r$. Indeed, we obtain 
\begin{eqnarray*}
|(Au_n,f_n)_{L^2(]0,T[\times \Tu)}| & \leq & C \| u_n\|_{X^{r,b}_T}\|f_n\|_{X^{r,-b}_T}\leq C
\end{eqnarray*}
The same estimates for the other terms imply that $([A,\partial_x^2]u_n,u_n)_{L^2(]0,T[\times \Tu)}$ is uniformly bounded. Yet, we have
$$[A,\partial_x^2]=-2\Psi(t)D^{2s-1} (\partial_x \varphi)\partial_x- \Psi(t)D^{2s-1}(\partial^2_x \varphi)$$
while
\begin{eqnarray*}
|(\Psi(t)D^{2s-1}(\partial^2_x \varphi)u_n,u_n)_{L^2(]0,T[\times \Tu)}| & \leq & C \| u_n\|_{X^{r,b}_T}\|u_n\|_{X^{r,-b}_T}\leq C.
\end{eqnarray*}
Finally we can control
\bnan
\label{propineg1}
|(\Psi(t)D^{2s-1} (\partial_x \varphi)\partial_x u_n,u_n)|\leq C.
\enan
If $f\in C^{\infty}_0(\omega)$ then
\bna
& &(\Psi(t)D^{2s-1} f^2 \partial_x u_n,u_n)\\
&=&(\Psi(t)D^{s-1} f \partial_x u_n, f D^s u_n)+(\Psi(t)[D^{s-1},f]\partial_x u_n, D^s u_n)\\
&=&(\Psi(t)D^{s-1} f \partial_x u_n, D^s f u_n)+(\Psi(t)D^{s-1} f \partial_x u_n,[D^s,f] u_n)\\
& &+(\Psi(t)[D^{s-1},f]\partial_x u_n, D^s u_n).
\ena
Our asumption gives $f u \in L^2_{loc}([0,T],H^s)$ and $f \partial_x u \in L^2_{loc}([0,T],H^{s-1})$. Indeed, $fu_n=\Xi_n f u+[f,\Xi_n]u$ is uniformly bounded in $L^2_{loc}([0,T],H^s)$ thanks to Lemma \ref{lemmecommut2} of Appendix and $s\leq r+1$. Making the same reasoning for $f \partial_x u_n$, we obtain
\bna
|(\Psi(t)D^{s-1} f \partial_x u_n, D^s f u_n)| \leq C.
\ena
Lemma \ref{lemmecommut} of the Appendix and $u\in L^2([0,T],H^r)$ yields (and likewise for the other term of commutator)
\bna
\left|(\Psi(t)D^{s-1} f \partial_x u_n,[D^s,f] u_n)\right|&\leq& \nor{D^{r-1} f \partial_x u_n}{L^2(L^2)}\nor{D^{\rho}[D^s,f] u_n}{L^2(L^2)}\\
&\leq& \nor{u_n}{L^2(H^{r})}\nor{u_n}{L^2(H^{s-1+\rho})}\leq C.
\ena
And finally, 
$$\left|(\Psi(t)D^{2s-1} f^2 \partial_x u_n,u_n)\right| \leq C$$
Then, writing $\partial_x \varphi= f^2(x)-f^2(x-x_0)$ and using (\ref{propineg1}), we obtain
$$\left|(\Psi(t)D^{2s-1} f^2(.-x_0) \partial_x u_n,u_n)\right| \leq C.$$
Finishing the proof as in Theorem \ref{thmpropagation3} with a partition of unity, we obtain
\bna
|(\Psi(t)D^{2s-1}\partial_x u,u)| \leq C
\ena
$$\int_0^T \sum_{k\neq 0} \Psi(t)|k|^{2s}|\widehat{u}(k,t)|^2\quad dt \leq C$$
which achieves the proof.\end{proof}

\begin{crllr}
\label{corpropagreg}
Here  $b>1/2$ and $\omega$ is any nonempty open set of $\Tu$.
Let $u\in X^{0,b}_T$ solution of 
\begin{eqnarray*}
%\label{eqnpropagreg}
\left\lbrace
\begin{array}{c}%argument r=alignement à droite puis center et left
i\partial_t u + \partial_x^2 u = \lambda |u|^2u\textnormal{ on } [0,T]\times \Tu\\
u \in C^{\infty}(]0,T[\times \omega) \\
\end{array}
\right.
\end{eqnarray*}
Then $u\in C^{\infty}(]0,T[\times \Tu)$
\end{crllr}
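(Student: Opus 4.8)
The plan is to run a bootstrap that repeatedly applies the propagation of regularity result, Theorem \ref{thmpropagreg}, raising the regularity of $u$ by a fixed gain $\rho$ at each step until we reach arbitrary high Sobolev regularity, and then to upgrade spatial regularity to full space-time smoothness. The starting point is that $u \in X^{0,b}_T$ with $b>1/2$, so in particular $u \in C([0,T],L^2) \subset L^\infty([0,T],L^2)$, and by the multilinear estimate (\ref{multilinL2}) of Lemma \ref{lemmemultilin}, the nonlinearity satisfies $\lambda|u|^2u \in X^{0,-b'}_T \subset X^{0,-b}_T$ (using $b' \le b$). Thus the equation $i\partial_t u + \partial_x^2 u = f$ holds with $r=0$ and $f = \lambda|u|^2u \in X^{0,-b}_T$, and the hypothesis $u \in C^\infty(]0,T[\times\omega)$ certainly gives $u \in L^2_{loc}(]0,T[,H^{\rho}(\omega))$ for any $\rho$, in particular for $\rho = \frac{1-b}{2}$.

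First I would apply Theorem \ref{thmpropagreg} with $r=0$ and $\rho = \frac{1-b}{2}$ to conclude $u \in L^2_{loc}(]0,T[,H^{\rho}(\Tu))$. The main issue in iterating is that Theorem \ref{thmpropagreg} requires the full-space datum $u \in X^{r,b}_T$ and the source $f \in X^{r,-b}_T$, whereas the gain it produces is only the \emph{local in time} statement $u \in L^2_{loc}(]0,T[,H^{r+\rho})$, not a gain in the $X^{r',b}_T$ norm on the whole interval. To close the loop I would reinvest this local gain by localizing in time: choose a slightly smaller subinterval $[a,c] \Subset\, ]0,T[$ and a cutoff $\chi \in C^\infty_0(]0,T[)$ equal to $1$ on $[a,c]$. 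Since $u \in L^2_{loc}(]0,T[,H^{r+\rho})$, on such a subinterval $\chi u$ is an $H^{r+\rho}$-valued $L^2$ function; combined with the equation and a re-application of the existence/propagation machinery (Theorem \ref{thmexistenceNl} and the multilinear estimates, treating $u$ as a solution on $[a,c]$ with $H^{r+\rho}$ data), one upgrades $u$ to $X^{r+\rho,b}_{[a',c']}$ on a still slightly smaller subinterval. The nonlinearity then obeys $\lambda|u|^2u \in X^{r+\rho,-b'}_{[a',c']} \subset X^{r+\rho,-b}_{[a',c']}$ by (\ref{multilinL2}), so the hypotheses of Theorem \ref{thmpropagreg} are met at the new level $r' = r+\rho$, and the iteration proceeds. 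After finitely many steps (each shrinking the interval by an arbitrarily small amount, and $]0,T[$ being open so the nested subintervals still exhaust any compact subset) one obtains $u \in L^2_{loc}(]0,T[,H^\sigma(\Tu))$ for every $\sigma$, hence $u \in C(]0,T[,C^\infty(\Tu))$ by Sobolev embedding.

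The remaining step is to promote regularity in $x$ to joint smoothness in $(t,x)$, i.e. control of time derivatives. This is routine once spatial regularity is unlimited: from the equation $i\partial_t u = -\partial_x^2 u + \lambda|u|^2u$, the right-hand side lies in $C(]0,T[,C^\infty(\Tu))$, so $\partial_t u \in C(]0,T[,C^\infty(\Tu))$; differentiating the equation in $t$ and using that the algebra $C^\infty(\Tu)$ is preserved under products, a simple induction gives $\partial_t^k u \in C(]0,T[,C^\infty(\Tu))$ for all $k$, whence $u \in C^\infty(]0,T[\times\Tu)$.

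The hard part will be the time-localization/reinvestment step: Theorem \ref{thmpropagreg} is stated with full-interval $X^{r,b}_T$ hypotheses but yields only $L^2_{loc}$ in time, so one must carefully convert the local-in-time spatial gain back into a genuine $X^{r+\rho,b}$ bound on a subinterval—using a time cutoff (Lemma \ref{lemmetps}), the restriction-space definition, and the local well-posedness/propagation estimates—before the next application is legitimate. Keeping track of the shrinking family of subintervals and verifying that the source $f=\lambda|u|^2u$ lands in the correct $X^{r+\rho,-b}$ space at each stage (via Lemma \ref{lemmemultilin}) is where the genuine bookkeeping lies; the rest is a clean finite induction.
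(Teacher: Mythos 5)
Your proposal is correct and follows essentially the same bootstrap as the paper: one application of Theorem \ref{thmpropagreg} with $r=0$, $\rho=\frac{1-b}{2}$ gains derivatives locally in time, and the gain is reinvested by re-solving the Cauchy problem from a time where the data is smoother, then iterating. The one step you flag as ``the hard part'' --- converting the $L^2_{loc}$-in-time gain back into an $X^{r+\rho,b}$ bound via shrinking subintervals --- is dissolved in the paper's proof: it picks a single $t_0$ with $u(t_0)\in H^{r+\rho}$, observes that by Theorem \ref{thmexistenceNl} the $H^{r+\rho}$ solution exists globally (the existence time is independent of $s$), and identifies it with $u$ by uniqueness in $X^{0,b}_T$, so that $u\in X^{r+\rho,b}_T$ on the whole interval and no time is lost at any stage of the iteration.
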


\begin{proof} 
We have $|u|^2u \in X^{0,-b}_T $ by multilinear estimates.\\
By applying once Theorem \ref{thmpropagreg}, we get $u\in L^2_{loc}([0,T],H^{1+\frac{1-b}{2}})$. Then we can choose $t_0$ such that $u(t_0) \in H^{1+\frac{1-b}{2}}$. We can then solve in $X^{1+\frac{1-b}{2},b}$ our nonlinear Schr\"odinger equation with initial data $u(t_0)$. By uniqueness in $X^{0,b}_T$, we conclude that $u\in X^{1+\frac{1-b}{2},b}_T$.\\
By iteration of this process, we get that $u\in L^2(]0,T[,H^{r})$ for every $r\in \R$ and $u\in C^{\infty}([0,T],\Tu)$.\end{proof}

\begin{crllr}
\label{uniciteL2}
Let $\omega$ be any nonempty open set of $\Tu$ and $u\in X^{0,b}_T$ solution of 
\begin{eqnarray*}
\left\lbrace
\begin{array}{rcl}%argument r=alignement à droite puis center et left
i\partial_t u + \partial_x^2 u &=& \lambda|u|^2u\textnormal{ on } [0,T]\times \Tu\\
u&=&0 \textnormal{ on } ]0,T[\times \omega \\
\end{array}
\right.
\end{eqnarray*}
Then $u=0$
\end{crllr}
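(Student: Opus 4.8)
The strategy is to combine the two preceding corollaries with the unique continuation result of Proposition \ref{uniquecontinuation}. The difficulty is that we only know $u \in X^{0,b}_T$, which is far too weak a regularity to apply a Carleman-type unique continuation theorem directly; the latter requires smoothness. So the plan is first to upgrade the regularity of $u$ on the set $\omega$, then to propagate this to all of $\Tu$, and only then to invoke unique continuation.

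First I would observe that, since $u \equiv 0$ on $]0,T[\times \omega$, in particular $u \in C^{\infty}(]0,T[\times \omega)$ (it is identically zero there, hence trivially smooth). This is precisely the hypothesis of Corollary \ref{corpropagreg}, whose other hypotheses --- that $u \in X^{0,b}_T$ solves $i\partial_t u + \partial_x^2 u = \lambda|u|^2 u$ --- are exactly what we are given. Applying that corollary therefore yields $u \in C^{\infty}(]0,T[\times \Tu)$.

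Once $u$ is known to be smooth on all of $]0,T[\times \Tu$, I would set $b(t,x) = \lambda |u(t,x)|^2$, which is then itself a smooth function on $[0,T]\times \Tu$ (on any compact subinterval away from the endpoints), so that $u$ solves the \emph{linear} Schr\"odinger equation $i\partial_t u + \partial_x^2 u = b(t,x)\, u$ with smooth potential $b$, together with $u = 0$ on $]0,T[\times \omega$. This places us exactly in the setting of Proposition \ref{uniquecontinuation}. That proposition then forces $u \equiv 0$, which is the desired conclusion.

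The main obstacle, as already signalled above, is entirely contained in the regularity bootstrap of Corollary \ref{corpropagreg}: one must pass from the Bourgain-space regularity $X^{0,b}_T$ to genuine $C^\infty$ smoothness, and this is where the propagation of microlocal regularity (Theorem \ref{thmpropagreg}) does the real work. Given that corollary, the present statement is a short deduction. The only minor point to be careful about is the time localization: the propagation and unique continuation results hold on the open interval $]0,T[$, so strictly one obtains $u \equiv 0$ on $]0,T[\times \Tu$, and then $u = 0$ on all of $[0,T]\times \Tu$ follows by continuity of $u$ in time (since $u \in X^{0,b}_T \subset C([0,T],L^2)$ as $b>1/2$).
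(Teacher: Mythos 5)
Your proposal follows exactly the paper's argument: apply Corollary \ref{corpropagreg} (noting that $u\equiv 0$ on $]0,T[\times\omega$ is trivially smooth there) to upgrade to $C^{\infty}(]0,T[\times\Tu)$, then rewrite the cubic term as a smooth potential $b=\lambda|u|^2$ and invoke Proposition \ref{uniquecontinuation}. The closing remark about recovering $u=0$ on the closed time interval by continuity is a correct (and slightly more careful) addition, but the route is the same.
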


\begin{proof}
Using Corollary \ref{corpropagreg}, we infer that $u\in C^{\infty}(]0,T[\times \Tu)$.\\
Proposition \ref{uniquecontinuation} of unique continuation implies $u=0$.\end{proof}
\begin{rmrk}
\label{corproluniquelin}
We have the same conclusion for $u\in X^{0,b}_T$ solution of 
\begin{eqnarray*}
\left\lbrace
\begin{array}{rcl}%argument r=alignement à droite puis center et left
i\partial_t u + \partial_x^2 u &=& 0\textnormal{ on } [0,T]\times \Tu\\
u&=&0 \textnormal{ on } ]0,T[\times \omega \\
\end{array}
\right.
\end{eqnarray*}
\end{rmrk}
%-------------------------------------------------------------------------
\section{Stabilization}
%-------------------------------------------------------------------------
Theorem \ref{thmstab} is a direct consequence of the following Proposition.
\begin{prpstn}
\label{propstabilisationL2}
Let $a\in L^{\infty}(\Tu)$ taking real values such that $a^2(x)>\eta$ on a nonempty open set $\omega$ of $\Tu$, for some constant $\eta>0$.\\
For every $T>0$ and every $R_0>0$, there exists a constant $C>0$ such that inequality
$$\left\|u(0)\right\|_{L^2}^2 \leq C \int_{0}^{T}\left\|au\right\|^2_{L^2} dt$$
holds for every solution $u\in \Xzbt$ of the damped equation 
\begin{eqnarray}
\label{eqndampedL2}
\left\lbrace
\begin{array}{rcl}%argument r=alignement à droite puis center et left
i\partial_t u + \partial_x^2 u +ia^2 u &=&\lambda|u|^2u \textnormal{ on } [0,T]\times \Tu\\
u(0)&=&u_{0} \in L^2
\end{array}
\right.
\end{eqnarray}
 and $\left\|u_0\right\|_{L^2}\leq R_0$.
\end{prpstn}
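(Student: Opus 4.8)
The plan is to argue by contradiction with a compactness--uniqueness scheme in the spirit of Dehman--G\'erard--Lebeau, the essential new difficulty being that a weak limit of NLS solutions need not solve NLS. If the estimate fails, there are solutions $u_n$ of (\ref{eqndampedL2}) with $\norL{u_n(0)}\le R_0$ and $\intT \norL{au_n}^2\,dt<\norL{u_n(0)}^2/n$. Setting $\alpha_n=\norL{u_n(0)}$ and $v_n=u_n/\alpha_n$, we get $\norL{v_n(0)}=1$, $\intT\norL{av_n}^2\,dt\to 0$, and $v_n$ solves
\[ i\partial_t v_n+\partial_x^2 v_n+ia^2 v_n=\lambda\alpha_n^2|v_n|^2v_n. \]
After extraction $\alpha_n\to\alpha\in[0,R_0]$. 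Since the coefficient $\lambda\alpha_n^2\le|\lambda|R_0^2$ stays bounded, the mass dissipation $\norL{v_n(t)}\le 1$ together with the bounds of Proposition \ref{linearbehavior} (valid uniformly in $n$) yields $\nor{v_n}{\Xzbt}\le C$; extract $v_n\rightharpoonup v$ in $\Xzbt$. Because $av_n\to 0$ in $L^2([0,T]\times\Tu)$ while $av_n\rightharpoonup av$, we obtain $a^2v=0$; in particular $v=0$ on $\omega$, and on $\omega$ the bound $a^2>\eta$ forces $v_n\to 0$ strongly in $L^2([0,T]\times\omega)$.

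The heart of the argument is to upgrade this to strong convergence on all of $\Tu$. Write $f_n=\lambda\alpha_n^2|v_n|^2v_n-ia^2v_n$, so that $i\partial_t v_n+\partial_x^2 v_n=f_n$. By the multilinear estimate (\ref{multilinL2}) of Lemma \ref{lemmemultilin}, $|v_n|^2v_n$ is bounded in $X^{0,-b'}_T$, and $a^2v_n$ is bounded in $X^{0,0}_T=L^2([0,T],L^2)$; by the Rellich Lemma \ref{injectioncompacte} both embed compactly into $X^{-1+b,-b}_T$ (here $1-b>0$ and $b-b'>0$). Hence $(f_n)$ is relatively compact in $X^{-1+b,-b}_T$, and since $v_n\rightharpoonup v$ forces $f_n\rightharpoonup i\partial_t v+\partial_x^2 v$ in $\mathcal{D}'$, we conclude $f_n\to f:=i\partial_t v+\partial_x^2 v$ strongly in $X^{-1+b,-b}_T$. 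Now $w_n=v_n-v$ is bounded in $\Xzbt$, tends weakly to $0$ so that $\nor{w_n}{X^{-1+b,-b}_T}\to 0$ (Lemma \ref{injectioncompacte} again), has source $f_n-f\to 0$ in $X^{-1+b,-b}_T$, and satisfies $w_n\to 0$ in $L^2([0,T]\times\omega)$. Theorem \ref{thmpropagation3} then gives $v_n\to v$ strongly in $L^2_{loc}(]0,T[\times\Tu)$.

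With strong convergence secured the limit can at last be identified. As $(v_n)$ is bounded in $L^4([0,T]\times\Tu)$ by (\ref{inegXsbL4}) and $v_n\to v$ in $L^2_{loc}$, interpolation gives $v_n\to v$ in $L^3_{loc}$, so $\lambda\alpha_n^2|v_n|^2v_n\to\lambda\alpha^2|v|^2v$ in $\mathcal{D}'$; comparing with $f_n\to i\partial_t v+\partial_x^2 v$ and using $a^2v=0$ shows that $v$ solves the clean equation $i\partial_t v+\partial_x^2 v=\lambda\alpha^2|v|^2v$ with $v=0$ on $]0,T[\times\omega$. If $\alpha>0$, then $\alpha v$ solves $i\partial_t(\alpha v)+\partial_x^2(\alpha v)=\lambda|\alpha v|^2(\alpha v)$ and vanishes on $\omega$, so Corollary \ref{uniciteL2} forces $v\equiv 0$; if $\alpha=0$ the same follows from the linear unique continuation of Remark \ref{corproluniquelin}. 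Finally, $v\equiv 0$ and the strong $L^2_{loc}$ convergence provide $t_0\in\,]0,T[$ with $\norL{v_n(t_0)}\to 0$, and the mass identity $\norL{v_n(0)}^2=\norL{v_n(t_0)}^2+2\int_0^{t_0}\norL{av_n}^2\,dt$ then gives $\norL{v_n(0)}\to 0$, contradicting $\norL{v_n(0)}=1$.

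I expect the main obstacle to be precisely the identification step: since a weak limit of NLS solutions need not solve NLS, one cannot subtract a known limit equation \emph{before} establishing compactness, so the naive route is circular. The device that unlocks the proof is to first extract strong convergence of the \emph{full} nonlinear source $f_n$ in the negative space $X^{-1+b,-b}_T$ through the compact embedding of Lemma \ref{injectioncompacte}; this is what makes $w_n=v_n-v$ an admissible sequence for Theorem \ref{thmpropagation3}. Only once strong $L^2_{loc}$ convergence is in hand is it legitimate to pass to the limit in $|v_n|^2v_n$ and invoke unique continuation. A secondary technical point is the uniform $\Xzbt$ bound for the rescaled sequence, which must be checked to hold independently of $n$ using the boundedness of the coefficient $\lambda\alpha_n^2$.
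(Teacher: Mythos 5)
Your argument is correct, and its skeleton is the paper's: argue by contradiction, extract a weak limit, upgrade to strong $L^2_{loc}$ convergence via the propagation of compactness (Theorem \ref{thmpropagation3}), and kill the limit by unique continuation (Corollary \ref{uniciteL2}, Remark \ref{corproluniquelin}). You depart from the paper at two places. First, you normalize $v_n=u_n/\alpha_n$ from the outset and carry the bounded coefficient $\lambda\alpha_n^2$ through the whole argument, which merges into a single thread the paper's two separate cases $\alpha>0$ (where it works with the unnormalized $u_n$) and $\alpha=0$. Second, and more substantially, you identify the limit equation by passing to the limit \emph{directly} in the cubic term, using the strong $L^2_{loc}$ convergence together with the uniform $L^4$ bound from (\ref{inegXsbL4}) and interpolation in $L^3_{loc}$; the paper deliberately avoids this and instead picks $t_0$ with $u_n(t_0)\to u(t_0)$ in $L^2$, solves NLS from the data $u(t_0)$, and uses the Lipschitz continuity of the flow map (Theorem \ref{thmexistenceNl}) with vanishing source $-ia^2u_n$ to conclude $u_n\to v$ in $\Xzbt$, which both identifies the limit and produces the final contradiction (you use the mass identity for the latter, which is equally valid and arguably cleaner). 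Both identification routes work; yours is more self-contained once strong convergence is in hand, the paper's sidesteps any discussion of convergence of $|v_n|^2v_n$. Two points worth making explicit in a write-up: Proposition \ref{linearbehavior} is stated for a fixed $\lambda$, so you should say why its constants are uniform over the coefficients $\lambda\alpha_n^2\in[0,|\lambda|R_0^2]$ (this follows from its proof, since only the dissipated bound $\norL{v_n(t)}\le 1$ and the size of the coefficient enter); and selecting $t_0$ with $\norL{v_n(t_0)}\to 0$ requires one further subsequence extraction, harmless in a contradiction argument.
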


\begin{proof}
We argue by contradiction, we suppose the existence of a sequence $(u_n)$ of solutions of (\ref{eqndampedL2}) such that
$$\left\|u_n(0)\right\|_{L^2}\leq R_0$$
and
\begin{eqnarray}
\label{majorationdampedL2}
 \int_{0}^{T}\left\|a u_n\right\|^2_{L^2} dt \leq \frac{1}{n} 
\left\|u_{0,n}\right\|_{L^2}^2
 \end{eqnarray}
Denote $\alpha_n=\left\|u_{0,n}\right\|_{L^2} \leq R_0$. Up to extraction, we can suppose that $\alpha_n \longrightarrow \alpha$.\\
We will distinguich two cases : $\alpha>0$ and $\alpha=0$.\\
First case : $\alpha_n \longrightarrow \alpha>0$\\
By decreasing of the $L^2$ norm, $(u_n)$ is bounded in $L^{\infty}([0,T],L^2)$ and indeed in $X^{0,b}_T$. Then, as $X^{0,b}_T$ is a separable Hilbert we can extract a subsequence such that $ u_n\rightharpoonup u$ weakly in $X^{0,b}_T$ for some $u \in X^{0,b}_T$.\\
By compact embedding, as we have $b<1$ and $-b<0$, we can also extract a subsequence such that we have strong convergence in $X^{-1+b,-b}_T$.\\
At this stage, we have to be careful because as it was seen by L. Molinet in \cite{Molinet}, the weak limit $u$ is not necessarily solution to NLS. See Remark \ref{rmqmolinet} below. Thus, $\lambda|u_n|^2u_n$ is bounded in $X^{0,-b'}_T$. We can extract a subsequence such that it converges weakly in $X^{0,-b'}_T $ to some $f$ and strongly in $X^{-1+b,-b}_T$ (here, we use $b>b'$).\\
Using (\ref{majorationdampedL2}) and passing to the limit in the equation verified by $u_n$, we get
 \begin{eqnarray}
\left\lbrace
\begin{array}{rcl}%argument r=alignement à droite puis center et left
i\partial_t u + \partial_x^2 u &=&f \textnormal{ on } [0,T]\times \Tu\\
u&=&0 \textnormal{ on } [0,T]\times \omega \\
\end{array}
\right.
\end{eqnarray}
Denote $r_n=u_n-u$ and $f_n=-ia^2u_n+\lambda |u_n|^2u_n-f$, we have 
$$i\partial_t r_n + \partial_x^2 r_n =f_n$$
Moreover, because of (\ref{majorationdampedL2}) we have
$$a(x) u_n \underset{L^2([0,T],L^2)}{\longrightarrow}0$$
and so, $f_n$ converges strongly to $0$ in $X^{-1+b,-b}_T$.\\
It also implies that
$u_n \underset{L^2([0,T],L^2(\omega))}{\longrightarrow}0$ and the same for $r_n$.\\
We are then in position to apply Theorem \ref{thmpropagation3}. We infer
$$r_n \underset{L^2_{loc}([0,T],L^2)}{\longrightarrow}0.$$
Then, we can pick one $t_0 \in [0,T]$ such that $r_n(t_0)$ tends to $0$ strongly in $L^2$ and indeed $u_n(t_0)\rightarrow u(t_0) $ in $L^2$. Denote $v$ the solution of
 \begin{eqnarray}
 \label{NLSL2}
\left\lbrace
\begin{array}{rcl}%argument r=alignement à droite puis center et left
i\partial_t v + \partial_x^2 v &=&\lambda|v|^2 v \textnormal{ on } [0,T]\times \Tu\\
v(t_0)&=&u(t_0)
\end{array}
\right.
\end{eqnarray}
The main problem is, at this point, we still do not know whether $u=v$.\\
Yet, we have seen in the existence Theorem \ref{thmexistenceNl} that the flow (even backward) is Lipschitz on bounded sets. Then, as we have $u_n(t_0)\rightarrow v(t_0)$ and $ia^2 u_n \rightarrow 0$ in $L^2([0,T],L^2)$, we get $u_n \rightarrow v$ in $X^{0,b}_T$. Therefore, $u=v$ and $u$ is solution of (\ref{NLSL2}). Corollary \ref{uniciteL2} implies $u=0$.\\
In particular, we have $\left\|u_n(0)\right\|_{L^2} \rightarrow 0$ which is a contradiction to our hypothesis $\alpha >0$.

Second case : $\alpha_n \longrightarrow 0$\\
Let us make the change of unknown $v_n=u_n/\alpha_n$. $v_n$ is solution of the system
$$i\partial_t v_n +\partial_x^2 v_n +ia^2 v_n=\lambda \alpha_n^2|v_n|^2v_n$$
and
\begin{eqnarray}
\label{majorenergystabL2}
\int_{0}^{T}\left\|a v_n\right\|^2_{L^2} dt \leq \frac{1}{n}.
\end{eqnarray}
Thus, we have 
\bnan
\label{vnapprox1}
\left\|v_n(0)\right\|_{L^2}= 1
\enan
and $v_n$ is bounded in $L^{\infty}([0,T],L^2)$ as the $L^2$ norm of $u_n$ decrease.\\
By Duhamel formula and multilinear estimates, we obtain
$$\left\|v_n\right\|_{X^{0,b}_T} \leq C \left\|v_n(0)\right\|_{L^2}+CT^{1-b-b'}\left(\left\|v_n\right\|_{X^{0,b}_T}+\alpha_n^2 \left\|v_n\right\|_{X^{0,b}_T}^3\right).$$
Then, if we take $CT^{1-b-b'}<1/2$, independant of $v_n$, we have
$$\left\|v_n\right\|_{X^{0,b}_T} \leq C+C\alpha_n^2 \left\|v_n\right\|_{X^{0,b}_T}^3.$$
Lemma \ref{continuiteXsbt} states that $\left\|v_n\right\|_{X^{0,b}_T}$ is continuous in $T$. Since it is bounded near $t=0$ and $\alpha_n\rightarrow 0$, a classical boot strap argument gives that $v_n$ is bounded on $X^{0,b}_T$. Using Lemma \ref{lemmerecouvrement}, we conclude that it is bounded in $X^{0,b}_T$ even for large $T$. Therefore, $\alpha_n^2|v_n|^2v_n $ tends to $0$ in $X^{0,-b'}_T$ and indeed in $X^{-1+b,-b}_T$.\\
Then, we can extract a subsequence such that $v_n \rightharpoonup v$ in $X^{0,b}_T$ and strongly in $X^{-1+b,-b}_T$. $v$ is solution of 
\begin{eqnarray}
\left\lbrace
\begin{array}{rcl}%argument r=alignement à droite puis center et left
i\partial_t v + \partial_x^2 v &=&0 \textnormal{ on } [0,T]\times \Tu \\
v&=&0 \textnormal{ on } ]0,T[\times \omega \\
\end{array}
\right.
\end{eqnarray}
which implies $v=0$ by Remark \ref{corproluniquelin} of unique continuation.\\
Estimate (\ref{majorenergystabL2}) implies 
 $$i a^2 v_n \underset{L^2([0,T],L^2)}{\longrightarrow}0$$
and so in $X^{-1+b,-b}_T$.\\
Then, we can apply Theorem \ref{thmpropagation3} as in the first case, to get that $v_n$ converges to $0$ in $L^2_{loc}([0,T],L^2)$. Take $t_0$ such that $v_n(t_0)$ strongly converges to $0$ in $L^2$ and solve with initial data $v_n(t_0)$, we obtain that $v_n$ converges to $0$ in $X^{0,b}_T$. This contradicts (\ref{vnapprox1}).\end{proof}

\begin{rmrk}
\label{rmqmolinet}
We could have used a variant of Theorem 1 of \cite{Molinet} to get directly that the weak limit can only be zero.
\end{rmrk}
%\begin{appendix}
\appendix
%---------------------------------------------------------------------
\section{Appendix}
%---------------------------------------------------------------------
In this Appendix, we recall some basic microlocal analysis estimates that can be easily proved in dimension $1$, without using any general theory. We also give the proof of some multilinear Bourgain estimates.

\bigskip
Following notation (\ref{Dr}) of the Introduction, we have
\begin{lmm}
\label{lemmecommut}
Let $f$ denote the operator of multiplication by $f\in C^{\infty}(\Tu)$.\\
Then, $[D^r,f]$ maps any $H^s$ into $H^{s-r+1}$.
\end{lmm}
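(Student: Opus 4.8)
The plan is to pass to the Fourier side and reduce the statement to an $\ell^2$ boundedness estimate that can be settled by Schur's test. Write $m(n)=\sgn(n)|n|^r$ for $n\neq0$ and $m(0)=1$ for the symbol of $D^r$, and let $\widehat{f}$ denote the Fourier coefficients of $f$. Since multiplication by $f$ is convolution on the Fourier side, one has
$$\widehat{[D^r,f]u}(n)=\sum_{k}\widehat{f}(n-k)\,(m(n)-m(k))\,\widehat{u}(k).$$
Hence proving that $[D^r,f]$ maps $H^s$ into $H^{s-r+1}$ is the same as showing that the infinite matrix
$$A_{n,k}=\langle n\rangle^{s-r+1}\,\widehat{f}(n-k)\,(m(n)-m(k))\,\langle k\rangle^{-s}$$
defines a bounded operator on $\ell^2(\Z)$.

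The heart of the matter is the pointwise symbol bound
$$|m(n)-m(k)|\le C_r\,|n-k|\,(\langle n\rangle+\langle k\rangle)^{r-1},\qquad n,k\in\Z,$$
which expresses the gain of one derivative, so that $[D^r,f]$ behaves like an operator of order $r-1$. I would prove it by cases. When $n$ and $k$ have the same sign, $|m(n)-m(k)|=\big||n|^r-|k|^r\big|$ and the mean value theorem applied to $x\mapsto x^r$ yields the estimate, splitting according to whether $|n|$ and $|k|$ are comparable or not, so as to replace the intermediate value by $\max(|n|,|k|)^{r-1}$. When $n$ and $k$ have opposite signs one has $m(n)-m(k)=\pm(|n|^r+|k|^r)$ while $|n-k|=|n|+|k|\ge\max(|n|,|k|)$, and the inequality follows at once since $|n|^r+|k|^r\lesssim\max(|n|,|k|)^r$. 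The exceptional value $m(0)=1$ is easily checked to respect the same bound, so the convention at the origin causes no difficulty. This case analysis, and in particular the sign crossing at the origin, is the one genuinely delicate point; the rest is bookkeeping.

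With the symbol bound in hand I would conclude by Schur's test. Since $f\in C^{\infty}(\Tu)$, its Fourier coefficients decay faster than any polynomial, so $|\widehat{f}(n-k)|\,|n-k|\le C_M\langle n-k\rangle^{-M}$ for every $M$. Writing $\max(\langle n\rangle,\langle k\rangle)^{r-1}$ in place of the equivalent $(\langle n\rangle+\langle k\rangle)^{r-1}$, the weight
$$\langle n\rangle^{s-r+1}\,\max(\langle n\rangle,\langle k\rangle)^{r-1}\,\langle k\rangle^{-s}$$
is, in each of the two regions $\langle n\rangle\ge\langle k\rangle$ and $\langle n\rangle\le\langle k\rangle$, a monomial $\langle n\rangle^{a}\langle k\rangle^{-a}$ with $a\in\{s,\ s-r+1\}$; by the Peetre inequality both $\langle n\rangle/\langle k\rangle$ and $\langle k\rangle/\langle n\rangle$ are $\lesssim\langle n-k\rangle$, so this weight is bounded by $C\langle n-k\rangle^{P}$ with $P=\max(|s|,|s-r+1|)$. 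Therefore $|A_{n,k}|\le C_M\langle n-k\rangle^{P-M}$, and choosing $M>P+1$ makes both the row sums $\sup_n\sum_k|A_{n,k}|$ and the column sums $\sup_k\sum_n|A_{n,k}|$ finite. Schur's test then gives the boundedness of $A$ on $\ell^2(\Z)$, which is precisely the asserted mapping property.
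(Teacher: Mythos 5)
Your overall architecture --- passing to Fourier coefficients, extracting one factor of $|n-k|$ from the symbol difference, absorbing it into the rapid decay of $\widehat f$, and closing with an $\ell^2$ operator bound --- is exactly the paper's (the paper concludes with Cauchy--Schwarz rather than Schur's test, which amounts to the same estimate). However, your key symbol bound is false for $r<0$, and the lemma carries no sign restriction on $r$; the paper in fact applies it to negative orders, e.g.\ to $[D^{s-1},f]$ with $s-1<0$ in the propagation-of-regularity argument. The inequality
$$|m(n)-m(k)|\le C_r\,|n-k|\,(\langle n\rangle+\langle k\rangle)^{r-1}$$
already fails for $r=-1$, $n=N$, $k=1$: the left-hand side tends to $1$ while the right-hand side is $O(1/N)$. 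Both of your justifications break at the same spot. In the same-sign case, when $r<1$ the exponent $r-1$ is negative, so the intermediate value satisfies $\xi^{r-1}\le \min(|n|,|k|)^{r-1}$ and cannot in general be replaced by $\max(|n|,|k|)^{r-1}$ (for $0\le r<1$ the final inequality happens to survive anyway, but for $r<0$ it does not). In the opposite-sign case, $|n|^r+|k|^r\approx\min(|n|,|k|)^{r}$ when $r<0$, not $\max(|n|,|k|)^{r}$, so "the inequality follows at once" is exactly where the argument collapses.

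The correct bound, which is the one the paper states, is
$$|m(n)-m(k)|\le C_r\,|n-k|\,\bigl(\langle n\rangle^{r-1}+\langle k\rangle^{r-1}\bigr),$$
which agrees with yours for $r\ge 1$ but behaves like $\min(\langle n\rangle,\langle k\rangle)^{r-1}$ for $r<1$. With this replacement your Schur argument goes through verbatim: on the region $\langle n\rangle\le\langle k\rangle$ the weight becomes $\langle n\rangle^{s-r+1}\langle n\rangle^{r-1}\langle k\rangle^{-s}=(\langle n\rangle/\langle k\rangle)^{s}$, on the region $\langle n\rangle\ge\langle k\rangle$ it becomes $(\langle n\rangle/\langle k\rangle)^{s-r+1}$, and Peetre's inequality bounds both by $C\langle n-k\rangle^{\max(|s|,|s-r+1|)}$, after which the rapid decay of $\widehat f$ closes the row and column sums exactly as you wrote. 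So the defect is a single false inequality, easily repaired, but it sits precisely at the point you yourself identified as the one genuinely delicate step, and as written your proof only establishes the lemma for $r\ge 0$.
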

\begin{proof}
We denote $|~|_{\wr}$ the modification (\ref{notationnorm}) of $|~|$ introduced in Lemma \ref{lemmepseudoxsb}. We also write $\sgn(0)=1$.

We have 
\bna
\widehat{D^r(fu)}(n)=\sgn(n)\left| n\right|_{\wr}^r\sum_k \widehat{f}(n-k)\widehat{u}(k)\\
\widehat{fD^ru)}(n)=\sum_k \widehat{f}(n-k)\sgn(k)\left| k\right|_{\wr}^r\widehat{u}(k).
\ena
And then
\bna
\widehat{[D_r,f]u}(n)&=&\sum_k \widehat{f}(n-k)(\sgn(n)\left| n\right|_{\wr}^r-\sgn(k)\left| k\right|_{\wr}^r)\widehat{u}(k)\\
\left|\widehat{[D_r,f]u}(n)\right|&\leq & C\sum_k |\widehat{f}(n-k)||n-k|(\left| n\right|_{\wr}^{r-1}+\left| k\right|_{\wr}^{r-1})|\widehat{u}(k)|.
\ena
Using $\left| n\right|_{\wr}^{2\rho} \leq C\left| n-k\right|_{\wr}^{2|\rho|}\left| k\right|_{\wr}^{2\rho}$ for any $\rho \in \R$, we get
\bnan
\left\|[D_r,f]u\right\|^2_{H^{s-r+1}} & \leq & \sum_{n}\left| n\right|_{\wr}^{2s}\left(\sum_{k} \left|\widehat{f}(n-k)(n-k)\right||\widehat{u}(k)|\right)^2\nonumber\\
&+&\sum_{n}\left(\sum_{k}\left| n-k\right|_{\wr}^{|s-r+1|}\left| k\right|_{\wr}^{s} \left|\widehat{f}(n-k)(n-k)\right||\widehat{u}(k)|\right)^2\nonumber\\
&\leq &\sum_{n}\left(\sum_{k}\left| n-k\right|_{\wr}^{|s|}\left| k\right|_{\wr}^{s} \left|\widehat{f}(n-k)(n-k)\right||\widehat{u}(k)|\right)^2 \label{terme1}\\
&+&\sum_{n}\left(\sum_{k}\left|n-k\right|_{\wr}^{|s-r+1|}\left|k\right|_{\wr}^{s} \left|\widehat{f}(n-k)(n-k)\right||\widehat{u}(k)|\right)^2\label{terme2}.
\enan
We estimate (\ref{terme1}) using Cauchy-Schwarz inequality, and it is the same for (\ref{terme2}).
\bna
(\ref{terme1})&\leq &\sum_{n}\left(\sum_{k}\left| n-k\right|_{\wr}^{|s|}|\widehat{f}(n-k)(n-k)|\right) \times\\
& &\left(\sum_{k}\left|n-k\right|_{\wr}^{|s|}|\widehat{f}(n-k)(n-k)|\left| k\right|_{\wr}^{2s} |\widehat{u}(k)|^2\right)\\
&\leq &\left(\sum_{k}\left| k\right|_{\wr}^{|s|}|k\widehat{f}(k)|\right)^2\left(\sum_{k}\left| k\right|_{\wr}^{2s} |\widehat{u}(k)|^2\right)\\
&\leq & C_{f} \nor{u}{H^s}^2.
\ena \end{proof}
\begin{crllr}
\label{corfHs}
If $f\in C^{\infty}(\Tu)$, there exists some constant $C$ such that for every $s\in \R$, there exists $C_s$ such that the following estimate holds
\bna
\left\|fu\right\|_{H^s}\leq C \left\|u\right\|_{H^s}+ C_s \left\|u\right\|_{H^{s-1}}
\ena
\end{crllr}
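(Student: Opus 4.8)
The plan is to reduce everything to the commutator estimate of Lemma \ref{lemmecommut}. Since the $H^s$ norm is defined precisely by $\left\|fu\right\|_{H^s}=\left\|D^s(fu)\right\|_{L^2}$, I would start from the algebraic identity
\begin{eqnarray*}
D^s(fu)=f\,D^su+[D^s,f]u,
\end{eqnarray*}
which is nothing more than the definition of the commutator $[D^s,f]=D^s f-fD^s$. The triangle inequality in $L^2$ then splits the problem into two independent pieces:
\begin{eqnarray*}
\left\|fu\right\|_{H^s}\leq \left\|f\,D^su\right\|_{L^2}+\left\|[D^s,f]u\right\|_{L^2}.
\end{eqnarray*}

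For the first piece I would simply use that multiplication by $f$ is bounded on $L^2$ with operator norm $\left\|f\right\|_{L^\infty}$, so that
\begin{eqnarray*}
\left\|f\,D^su\right\|_{L^2}\leq \left\|f\right\|_{L^\infty}\left\|D^su\right\|_{L^2}=\left\|f\right\|_{L^\infty}\left\|u\right\|_{H^s}.
\end{eqnarray*}
The whole point of isolating this Corollary is that the constant here is $\left\|f\right\|_{L^\infty}$, hence \emph{completely independent of $s$}; this is exactly the feature that will later permit the $s$-independent estimates "up to smoother terms" used in Propositions \ref{proplinbehaviorHsindep} and \ref{thmcontrolenonlin}. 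I would therefore take $C=\left\|f\right\|_{L^\infty}$.

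The second piece is where Lemma \ref{lemmecommut} does all the work. Applying it with $r=s$, the operator $[D^s,f]$ maps $H^\sigma$ into $H^{\sigma-s+1}$; choosing the input index $\sigma=s-1$ makes the target space $H^{0}=L^2$, whence
\begin{eqnarray*}
\left\|[D^s,f]u\right\|_{L^2}\leq C_s\left\|u\right\|_{H^{s-1}},
\end{eqnarray*}
with $C_s$ the (genuinely $s$-dependent) constant furnished by the Lemma. Combining the two bounds gives exactly the claimed inequality. There is no real obstacle here: the only thing requiring a little care is the bookkeeping of indices, so that the commutator lands precisely in $L^2$, together with the observation that the gain of one derivative in Lemma \ref{lemmecommut} is what confines all of the $s$-dependence to the lower-order term $\left\|u\right\|_{H^{s-1}}$ while leaving the leading coefficient $s$-independent.
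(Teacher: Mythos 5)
Your proof is correct and is exactly the paper's argument: the paper's entire proof is the one line ``We just write $D^s(fu)=f D^s u+[D^s,f]u$'', with the first term bounded by $\left\|f\right\|_{L^\infty}\left\|u\right\|_{H^s}$ and the second handled by Lemma \ref{lemmecommut} applied with $r=s$ on $H^{s-1}$, precisely as you do. Your additional remarks on why the leading constant is $s$-independent match the role this corollary plays later in the paper.
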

\begin{proof} We just write $D^s(fu)=f D^s u+[D^s,f]u$.\end{proof}

\begin{lmm}
\label{lemmecommut2}
Let $f\in C^{\infty}(\Tu)$ and $\rho_{\varepsilon}=e^{\varepsilon^2 \partial^2_x}$ with $0\leq \varepsilon \leq 1$.\\
Then, $[\rho_{\varepsilon},f]$ is uniformly bounded as an operator from $H^s$ into $H^{s+1}$.
\end{lmm}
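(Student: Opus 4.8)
The plan is to pass to the Fourier side and reduce the statement to a single convolution-type kernel estimate. Since $\rho_\varepsilon$ is the Fourier multiplier $e^{-\varepsilon^2k^2}$ and multiplication by $f$ is convolution by $\widehat f$, the commutator acts by
\[
\widehat{[\rho_\varepsilon,f]u}(n)=\sum_k m_\varepsilon(n,k)\,\widehat f(n-k)\,\widehat u(k),\qquad m_\varepsilon(n,k)=e^{-\varepsilon^2n^2}-e^{-\varepsilon^2k^2}.
\]
The whole point is that the multiplier difference $m_\varepsilon$ must carry a gain of one derivative, \emph{uniformly} in $\varepsilon\in[0,1]$, and this is exactly what compensates the passage from $H^s$ to $H^{s+1}$.

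The heart of the matter is the pointwise bound
\[
|k|_{\wr}\,|m_\varepsilon(n,k)|\le C\,|n-k|_{\wr}^{2},\qquad \varepsilon\in[0,1].
\]
To obtain it I would use that $t\mapsto e^{-\varepsilon^2t^2}$ is even, so that $m_\varepsilon(n,k)=-\int_{|k|}^{|n|}2\varepsilon^2t\,e^{-\varepsilon^2t^2}\,dt$, together with the elementary uniform bound $2\varepsilon^2t\,e^{-\varepsilon^2t^2}=\tfrac{2}{t}(\varepsilon t)^2e^{-(\varepsilon t)^2}\le 2e^{-1}/t$, valid for every $t>0$ and every $\varepsilon$. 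Integrating gives $|m_\varepsilon(n,k)|\le C\,\big||n|-|k|\big|/\min(|n|,|k|)\le C\,|n-k|/\min(|n|,|k|)$, and then the triangle inequality $|k|\le|n|+|n-k|$ converts $\min(|n|,|k|)$ into $|k|_{\wr}$ at the cost of an extra harmless factor $|n-k|_{\wr}$; the degenerate cases $n=0$ or $k=0$ are dispatched by the trivial bound $|m_\varepsilon|\le 2$.

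Once this is available I would conclude by a Schur/Young estimate exactly as in Lemma \ref{lemmecommut}. Writing the weighted kernel of the operator between $H^s$ and $H^{s+1}$ and using the inequality $|n|_{\wr}^{2\rho}\le C|n-k|_{\wr}^{2|\rho|}|k|_{\wr}^{2\rho}$ (already invoked in Lemma \ref{lemmecommut}) with $\rho=(s+1)/2$, one gets
\[
\frac{|n|_{\wr}^{s+1}}{|k|_{\wr}^{s}}\,|m_\varepsilon(n,k)|\le C\,|n-k|_{\wr}^{|s+1|}\,|k|_{\wr}\,|m_\varepsilon(n,k)|\le C\,|n-k|_{\wr}^{|s+1|+2}.
\]
Hence the full kernel is dominated, uniformly in $\varepsilon$, by $b(n-k):=C\,|n-k|_{\wr}^{|s+1|+2}\,|\widehat f(n-k)|$, which lies in $\ell^1(\Z)$ because $f\in C^\infty(\Tu)$ forces $\widehat f$ to decay faster than any polynomial. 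Young's inequality on $\ell^2$ then yields $\|[\rho_\varepsilon,f]u\|_{H^{s+1}}\le \|b\|_{\ell^1}\,\|u\|_{H^s}$ with $\|b\|_{\ell^1}$ independent of $\varepsilon$, which is the assertion.

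I expect the only real obstacle to be precisely the derivative gain uniform in $\varepsilon$: the operator $\rho_\varepsilon$ is essentially the identity at frequencies $\lesssim\varepsilon^{-1}$ and a strong damping above, so no single frequency regime produces the gain by itself. The bound $2\varepsilon^2t\,e^{-\varepsilon^2t^2}\le C/t$ is what ties the two regimes together, and turning the resulting weight $1/\min(|n|,|k|)$ into the clean $|k|_{\wr}$ is the only slightly delicate piece of bookkeeping.
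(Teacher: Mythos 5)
Your proof is correct and follows essentially the same route as the paper: the paper likewise reduces to the uniform-in-$\varepsilon$ pointwise bound $\left|e^{-\varepsilon^2 n^2}-e^{-\varepsilon^2 k^2}\right|\leq C|n-k|\left(\left\langle n\right\rangle^{-1}+\left\langle k\right\rangle^{-1}\right)$, obtained from $\left|\partial_{\xi}\left(e^{-\varepsilon^2\xi^2}\right)\right|\leq C\left\langle \xi\right\rangle^{-1}$, and then runs the same Schur-type convolution argument as in Lemma \ref{lemmecommut}. Your FTC derivation of the multiplier gain and your Young's-inequality conclusion are just mild reformulations of those two steps.
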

\begin{proof}
It is exactly the same as for Lemma \ref{lemmecommut} using
$$\left|e^{-\varepsilon^2 n^2}-e^{-\varepsilon^2 k^2}\right|\leq C|n-k| \left(\left\langle  n\right\rangle^{-1}+\left\langle k\right\rangle^{-1}\right)$$
%version avec leplacien
%$$\left|\frac{1}{1+\varepsilon^2 n^2}-\frac{1}{1+\varepsilon^2 k^2}\right|\leq C(n-k) \left(\left\langle n\right\rangle^{-1}+\left\langle k\right\rangle^{-1}\right)$$
because
$$\left|\partial_{\xi}\left(e^{-\varepsilon^2 \xi^2}\right)\right|\leq C \left\langle  \xi\right\rangle^{-1}.$$\end{proof}
%$$\partial_{\xi}\left(\frac{1}{1+\varepsilon^2 \xi^2}\right)\leq C \left\langle \xi\right\rangle^{-1}$$
We give the proof of multilinear Bourgain estimates. We also get some information about the dependence on $s$ of the estimates.
\begin{prpstn}
\label{propinegtrilin}
For every $s\geq 0$, we have uniformly on $T\leq 1$
\bnan
\label{inegtrilinsanss1}
 \left\| |u|^2 u \right\|_{X^{s,-3/8}_T} \leq C 3^s\left\|u\right\|^2_{X^{0,3/8}_T} \left\|u\right\|_{X^{s,3/8}_T}
\enan
\bnan
\label{inegtrilinsanss2}
\left\||u|^2u-|\tilde{u}|^2\tilde{u}\right\|_{X^{s,-3/8}_T} \leq C3^s \left(\left\|u\right\|^2_{X^{s,3/8}_T}+ \left\|\tilde{u}\right\|^2_{X^{s,3/8}_T} \right) \left\|u-\tilde{u}\right\|_{X^{s,3/8}_T}.
\enan
Moreover, there exists $C>0$ such that for every $s\geq 1$, we can find $C_s>0$ such that for every $T\leq 1$
\bnan
\label{inegtrilinavecs}
 \left\| |u|^2 u \right\|_{X^{s,-3/8}_T} &\leq& C \left\|u\right\|^2_{X^{0,3/8}_T} \left\|u\right\|_{X^{s,3/8}_T}\nonumber\\
 & &+C_s \left\|u\right\|_{X^{s-1,3/8}_T}\left\|u\right\|_{X^{1,3/8}_T}\left\|u\right\|_{X^{0,3/8}_T}.
\enan
\end{prpstn}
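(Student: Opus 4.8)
The plan is to prove the three estimates by one and the same mechanism: pass to the space–time Fourier side, distribute the output weight $\left|k\right|_{\wr}^{s}$ among the three input frequencies by an elementary inequality, and close with Hölder and the $L^4$ bound (\ref{inegXsbL4}). For (\ref{inegtrilinsanss1}) and (\ref{inegtrilinsanss2}) a crude distribution producing the factor $3^s$ is enough; the whole difficulty of (\ref{inegtrilinavecs}) is to carry out the distribution with a top–order constant independent of $s$, which forces a sharper, mean–value type frequency inequality.

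First I would dualize. Since $X^{-s,3/8}_T$ is the dual of $X^{s,-3/8}_T$, I write
\[
\left\||u|^2u\right\|_{X^{s,-3/8}_T}=\sup\left\{\left|\int_0^T\!\!\int_{\Tu} u\,\bar u\,u\,\bar v\,dx\,dt\right|\ :\ \left\|v\right\|_{X^{-s,3/8}_T}\leq 1\right\}.
\]
The output frequency of $u\bar u u$ is $k=k_1-k_2+k_3$, i.e. exactly the frequency $k_4$ of $v$; inserting $1=\left|k_4\right|_{\wr}^{-s}\left|k_4\right|_{\wr}^{s}$, the factor $\left|k_4\right|_{\wr}^{-s}$ converts $v\in X^{-s,3/8}_T$ into some $\tilde v$ with $\left\|\tilde v\right\|_{X^{0,3/8}_T}\leq 1$, while the surviving weight $\left|k_4\right|_{\wr}^{s}=\left|k_1-k_2+k_3\right|_{\wr}^{s}$ is what must be spread over $k_1,k_2,k_3$. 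Replacing each function by the one whose space–time Fourier coefficients are the moduli of the originals (this alters neither the $X^{s,b}_T$ nor the $L^4$ norms, and a reflection $x\mapsto -x$ fixes the frequency constraint), Parseval turns the trilinear form into $\int_0^T\!\int_{\Tu}$ of a product of four nonnegative functions, so four Hölder factors in $L^4$ together with (\ref{inegXsbL4}) finish the job once the weight has been placed.

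For (\ref{inegtrilinsanss1}) I would use $\left|k_1-k_2+k_3\right|_{\wr}^{s}\leq 3^{s}\bigl(\left|k_1\right|_{\wr}^{s}+\left|k_2\right|_{\wr}^{s}+\left|k_3\right|_{\wr}^{s}\bigr)$, coming from $|k|\leq 3\max_i|k_i|$: each of the three terms carries $D^s$ on one factor and leaves the other two at the $X^{0,3/8}_T$ level, giving $C3^s\left\|u\right\|^2_{X^{0,3/8}_T}\left\|u\right\|_{X^{s,3/8}_T}$. Estimate (\ref{inegtrilinsanss2}) then follows from the telescoping identity
\[
|u|^2u-|\tilde u|^2\tilde u=(u-\tilde u)\,\bar u\,u+\tilde u\,\overline{(u-\tilde u)}\,u+\tilde u\,\bar{\tilde u}\,(u-\tilde u),
\]
applying the same scheme to each summand and bounding every factor at the $X^{s,3/8}_T$ level via $X^{s,3/8}_T\hookrightarrow X^{0,3/8}_T$.

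The crux is (\ref{inegtrilinavecs}). Ordering the frequencies so that $\left|k_1\right|=\max_i\left|k_i\right|$, whence $|k_2|+|k_3|\leq 2|k_1|$, I would replace the crude inequality by the mean–value bound, valid for $s\geq 1$,
\[
\Bigl|\,\left|k\right|_{\wr}^{s}-\left|k_1\right|_{\wr}^{s}\,\Bigr|\leq s\,|k-k_1|\,\bigl(|k_1|+|k-k_1|\bigr)^{s-1}\leq C_s\,(|k_2|+|k_3|)\,\left|k_1\right|_{\wr}^{s-1},
\]
so that $\left|k\right|_{\wr}^{s}\leq\left|k_1\right|_{\wr}^{s}+C_s(|k_2|+|k_3|)\left|k_1\right|_{\wr}^{s-1}$ with leading constant $1$ (the three possible choices of the maximal index cost only a fixed factor). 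The leading term $\left|k_1\right|_{\wr}^{s}$ puts $D^s$ on the highest factor and keeps the other two at the $X^{0,3/8}_T$ level, producing $C\left\|u\right\|^2_{X^{0,3/8}_T}\left\|u\right\|_{X^{s,3/8}_T}$ with $C$ independent of $s$; the remainder places $D^{s-1}$ on the highest factor, $D^1$ on the one carrying $k_2$ or $k_3$, and $D^0$ on the last, yielding exactly $C_s\left\|u\right\|_{X^{s-1,3/8}_T}\left\|u\right\|_{X^{1,3/8}_T}\left\|u\right\|_{X^{0,3/8}_T}$. The main obstacle is precisely this isolation of a leading constant equal to $1$: the naive distribution only gives $3^s$, and it is the convexity estimate above, together with the fact that the discarded remainder is genuinely one derivative smoother, that makes the $s$–uniform top–order term possible. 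The passage between the restriction norms $X^{\cdot,\cdot}_T$ and their defining extensions, and the uniformity for $T\leq 1$, are routine and are inherited from the uniform form of (\ref{inegXsbL4}).
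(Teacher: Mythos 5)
Your proposal is correct and follows essentially the same route as the paper: dualize against $v\in X^{-s,3/8}$, pass to space--time Fourier coefficients with $k=k_1-k_2+k_3$, replace each factor by the function whose Fourier coefficients are the moduli, close with the $L^4$ estimate (\ref{inegXsbL4}), and for (\ref{inegtrilinavecs}) split the weight via $\left|k\right|_{\wr}^{s}\leq\left|k_1\right|_{\wr}^{s}+C_s\left|k_1\right|_{\wr}^{s-1}\bigl(\left|k_2\right|_{\wr}+\left|k_3\right|_{\wr}\bigr)$, which the paper also derives from the mean value theorem (fundamental theorem of calculus applied to $(1+x+y)^s$). The only cosmetic difference is that you bound $\left|k\right|_{\wr}^{s}$ by a sum of the three weights where the paper uses $3^s\max_i\left|k_i\right|_{\wr}^{s}$ and treats the three cases symmetrically; this changes nothing.
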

\begin{proof}
We follow closely \cite{Bourgainlivre} p 107.
For estimates (\ref{inegtrilinsanss1}) and (\ref{inegtrilinsanss2}), it is enough to prove
\bna
\left\|u_1 \overline{u_2}u_3\right\|_{X^{s,-3/8}}\leq C \left(\left\|u_1 \right\|_{X^{s,3/8}}\left\|u_2 \right\|_{X^{0,3/8}}\left\|u_3 \right\|_{X^{0,3/8}}\right.\\ \left.+\left\|u_1 \right\|_{X^{0,3/8}}\left\|u_2 \right\|_{X^{s,3/8}}\left\|u_3 \right\|_{X^{0,3/8}}+\left\|u_1 \right\|_{X^{0,3/8}}\left\|u_2 \right\|_{X^{0,3/8}}\left\|u_3 \right\|_{X^{s,3/8}}\right).
\ena
Denote $w=u_1 \overline{u_2}u_3$. We argue by duality. Let $v\in X^{-s,3/8}$.\\
We write $\widehat{v}(\lambda,k)$ instead of $\widehat{\widehat{v}}(\lambda,k)$ the Fourier transform in time and space variable. $\left|~\right|_{\wr}$ still denotes the modification (\ref{notationnorm}) of $|~|$ defined in the proof of Lemma \ref{lemmepseudoxsb}.
\bnan
\label{intdualite}
\int_{\R}\int_{\Tu} w \overline{v}=\sum_k \int_{\lambda} \widehat{w}(\lambda,k) \overline{\widehat{v}(\lambda,k)}
= \sum_k \int_{\lambda} \left| k\right|_{\wr}^{s}\widehat{w}(\lambda,k) \left| k\right|_{\wr}^{-s} \overline{\widehat{v}(\lambda,k)}\\
= \sum_{k_1,k_2,k_3}\int_{\lambda_1,\lambda_2,\lambda_3}\left| k\right|_{\wr}^{s}\widehat{u_1}(\lambda_1,k_1)\overline{\widehat{u_2}(\lambda_2,k_2)}\widehat{u_3}(\lambda_3,k_3)\left|k\right|_{\wr}^{-s}\overline{\widehat{v}(\lambda,k)} \nonumber
\enan
where $k=k_1-k_2+k_3$ and $\lambda=\lambda_1-\lambda_2+\lambda_3$.\\
Observe that $\left| k\right|_{\wr}^{s}\leq  3^s\max (\left| k_1\right|_{\wr}^{s},\left| k_2\right|_{\wr}^{s},\left| k_3\right|_{\wr}^{s})$. We assume $\left|k\right|_{\wr}^{s} \leq 3^s\left| k_1\right|_{\wr}^{s}$, and the other possibilities will produce the other terms of the right hand side of the estimate we want (we do not write them any more, each inequality is true if we add the same term with $u_2$ and $u_3$).
\bna
(\ref{intdualite}) \leq 3^s\sum_{k_1,k_2,k_3}\int_{\lambda_1,\lambda_2,\lambda_3}\left| k_1\right|_{\wr}^{s}\left|\widehat{u_1}(\lambda_1,k_1)\right|\left|\widehat{u_2}(\lambda_2,k_2)\right| \left|\widehat{u_3}(\lambda_3,k_3)\right|\left| k\right|_{\wr}^{-s}\left|\widehat{v}(\lambda,k)\right|
\ena
Denote $u_1^{\S}$ the function with Fourier transform equal to $\left|\widehat{u_1}(\lambda,k)\right|$. Then, using dispersive estimate (\ref{inegXsbL4})
\bna
(\ref{intdualite}) & \leq & 3^s\int_{\R} \int_{\Tu} (D^s u_1^{\S}) \overline{u_2^{\S}}u_3^{\S} \overline{D^{-s}v^{\S}}\\
&\leq &C 3^s \left\|D^s u_1^{\S}\right\|_{L^4} \left\|u_2^{\S}\right\|_{L^4}\left\|u_3^{\S}\right\|_{L^4}\left\|D^{-s}v^{\S}\right\|_{L^4}\\
&\leq & C 3^s\left\|D^s u_1^{\S}\right\|_{X^{0,3/8}} \left\|u_2^{\S}\right\|_{X^{0,3/8}}\left\|u_3^{\S}\right\|_{X^{0,3/8}}\left\|D^{-s}v^{\S}\right\|_{X^{0,3/8}}\\
&\leq & C 3^s\left\|u_1\right\|_{X^{s,3/8}} \left\|u_2\right\|_{X^{0,3/8}}\left\|u_3\right\|_{X^{0,3/8}}\left\|v\right\|_{X^{-s,3/8}}.
\ena 
Estimate (\ref{inegtrilinavecs}) is obtained similarly using the following inequality, if for example\\ $\left|k_1\right|=\max(\left|k_1\right|, \left|k_2\right|,\left|k_3\right|)$, 
$$ \left|k_1-k_2+k_3\right|_{\wr}^s \leq \left|k_1\right|_{\wr}^s + C_s\left| k_1\right|_{\wr}^{s-1} \left(\left| k_2\right|_{\wr}+\left|k_3\right|_{\wr}\right).$$
This is a consequence of the fundamental theorem of calculus applied to the function $(1+x+y)^s$.
\end{proof}
%\end{appendix}

%%-----------------------------
%%      your bibliography
%%-----------------------------

\bibliographystyle{plain}

\begin{thebibliography}{99}
\bibitem{BardosMasrour} C. Bardos and T. Masrour : Mesures de d\'efaut : observation et contr\^ole de plaques, C.R.A.S., t 323, S\'erieI, 621-626, 1996
\bibitem{Bergh} J. Bergh, J. L\"ofstrom, Interpolation Spaces, An Introduction , Springer Verlag 1976 
\bibitem{Bourgain} J. Bourgain : Fourier transform restriction phenomena for certain lattice subsets and applications to nonlinear evolution equations, Part I, Geometric and Functional Analysis 3 (1993), 107-156
%\bibitem[B2]{Bourgainirrat} J. Bourgain : On Strichartz's Inequalities and the Nonlinear Schrödinger Equation on Irrational Tori 
\bibitem{Bourgainlivre} J. Bourgain : Global Solutions of Nonlinear Schr\"odinger Equations, American Mathematical Society Colloquium publications, Volume 46 p 105 
\bibitem{Strichartz} N. Burq, P. G\'erard and N. Tzvetkov : Strichartz Inequalities and the nonlinear Schr\"odinger equation on compact manifolds, American Journal  of Mathematics. 126 (2004), 569-605
%\bibitem[BGT2]{Xsbsphere} N. Burq, P. Gérard and N. Tzvetkov : Multilinear eigenfunction estimates and global existence for the three dimensional nonlinear Schrödinger equations, Ann. Scient Ec Norm Sup., 4° série, t.38, 2005, p.255 à 301
\bibitem{Burq} N. Burq and M. Zworski : Geometric control in the presence of a black box, J. of American Math. Soc., 17,2, 443-471.
\bibitem{DL}B. Dehman and G. Lebeau : Analysis of the HUM Control Operator and Exact Controllability for Semilinear Waves in Uniform Time, preprint
\bibitem{control-nl} B. Dehman, P. G\'erard, and G. Lebeau : Stabilization and control for the nonlinear Schr\"odinger equation on a compact
surface, Mathematische Zeitschrift, Volume 254, Number 4 / December,
2006, 729-749.
\bibitem{control_NLW} B. Dehman, G. Lebeau and E. Zuazua : Stabilization and control for the subcritical semilinear wave equation. Anna. Sci. Ec. Norm. Super. 36:525-551 (2003).
\bibitem{mesuredefaut} P. G\'erard : Microlocal Defect Measures, Comm. Partial Diff. eq. 16 (1991), 1762-1794. 
\bibitem{ginibre} J. Ginibre : Le probl\`eme de Cauchy pour des EDP semi-lin\'eaires périodiques en variables d'espace, S\'eminaire Bourbaki, 1994-1995, exp. 796, p.163-187
\bibitem{Isakov} V. Isakov : Carleman Type Estimates in an Anisotropic Case and Applications, Journal Of Differential Equations, 105(2):217-238, 1993
\bibitem{Jaffard} S. Jaffard : Contr\^ole interne exacte des vibrations d'une plaque rectangulaire, Portugal. Math. 47 (1990), no.4, 423-429. MR 27:4019
\bibitem{Komornik} V. Komornik, P. Loreti : Fourier Series in Control Theory, Springer
\bibitem{control-lin1} G. Lebeau : Contr\^ole de l'\'equation de Schr\"odinger, Journal de Math\'ematiques Pures et Appliqu\'ees, 71:267-291, 1992. MR 93i:35018
\bibitem{Machtyngier} E. Machtyngier, Exact controllability for the Schr\"odinger equation, SIAM J. Control Optim. 32 (1994), no. 1, 24-34 
\bibitem{Miller} L. Miller : Controllability cost of conservative systems: resolvent condition and transmutation, Journal of Functional Analysis 218, 2 (2005) 425-444
\bibitem{Molinet} L. Molinet : On ill-posedness for the one-dimensional periodic cubic Schr\"odinger equation, to appear in Mathematical Research Letters
\bibitem{Phung} K.-D. Phung : Observability and control of Schr\"odinger equations, SIAM J. Control Optim. 40 (2001), no. 1, 211-230 
\bibitem{RosierZhang} L. Rosier, B.-Y. Zhang : Exact Controllability and Stabilization of
the Nonlinear Schr\"odinger Equation on a Bounded Interval, private communication
\bibitem{Tao} T. Tao : Nonlinear Dispersive Equations, Local and global Analysis, CBMS Regional Conference Series in Mathematics, Number 106 
\bibitem{Tenenbaum} G. Tenenbaum, M. Tucsnak : Fast and strongly localized observation for the Schr\"odinger equation, Transactions of the American Mathematical Society, to appear, 2008 - iecn.u-nancy.fr
\bibitem{zuazua} E. Zuazua : Exact controllability for the semilinear wave equation. J. Math. Pures Appl. 69(1), 33--55 (1990)
\end{thebibliography}

\end{document}